\documentclass[10pt]{article}

\usepackage{url}
\usepackage{mathtools}
\usepackage{amssymb}
\usepackage{amsthm}
\usepackage{empheq}
\usepackage{latexsym}
\usepackage{enumitem}
\usepackage{eurosym}
\usepackage{dsfont}
\usepackage{yfonts}
\usepackage{appendix}
\usepackage{color} 
\usepackage{aliascnt}
\usepackage[unicode]{hyperref}
\usepackage{frcursive}
\usepackage[utf8]{inputenc}
\usepackage[T1]{fontenc}
\usepackage{geometry}
\usepackage{multirow}
\usepackage[colorinlistoftodos]{todonotes}
\usepackage{lmodern}
\usepackage{anyfontsize}
\usepackage{stmaryrd}
\usepackage{natbib}
\usepackage{cleveref}

\usepackage{amsbsy}

\usepackage{comment}

\setcitestyle{numbers,open={[},close={]}}

\definecolor{red}{rgb}{0.7,0.15,0.15}
\definecolor{green}{rgb}{0,0.5,0}
\definecolor{blue}{rgb}{0,0,0.7}
\hypersetup{colorlinks, linkcolor={red},citecolor={green}, urlcolor={blue}}
			
\makeatletter \@addtoreset{equation}{section}

\newtheorem{theorem}{Theorem}[section]
\crefname{theorem}{theorem}{theorems}
\Crefname{theorem}{Theorem}{Theorems}

\newaliascnt{assumption}{theorem}
\newtheorem{assumption}[assumption]{Assumption}
\aliascntresetthe{assumption}
\crefname{assumption}{assumption}{assumptions}
\Crefname{assumption}{Assumption}{Assumptions}

\newaliascnt{corollary}{theorem}
\newtheorem{corollary}[corollary]{Corollary}
\aliascntresetthe{corollary}
\crefname{corollary}{corollary}{corollaries}
\Crefname{corollary}{Corollary}{Corollaries}

\newaliascnt{example}{theorem}

\aliascntresetthe{example}
\crefname{example}{example}{examples}
\Crefname{example}{Example}{Examples}

\newaliascnt{exercise}{theorem}

\aliascntresetthe{exercise}
\crefname{exercise}{exercise}{exercises}
\Crefname{exercise}{Exercise}{Exercises}

\newaliascnt{lemma}{theorem}
\newtheorem{lemma}[lemma]{Lemma}
\aliascntresetthe{lemma}
\crefname{lemma}{lemma}{lemmas}
\Crefname{lemma}{Lemma}{Lemmas}

\newaliascnt{proposition}{theorem}
\newtheorem{proposition}[proposition]{Proposition}
\aliascntresetthe{proposition}
\crefname{proposition}{proposition}{propositions}
\Crefname{proposition}{Proposition}{Propositions}

\newaliascnt{condition}{theorem}

\aliascntresetthe{condition}
\crefname{condition}{condition}{conditions}
\Crefname{condition}{Condition}{Conditions}

\newaliascnt{definition}{theorem}
\newtheorem{definition}[definition]{Definition}
\aliascntresetthe{definition}
\crefname{definition}{definition}{definitions}
\Crefname{definition}{Definition}{Definitions}

\newaliascnt{remark}{theorem}
\newtheorem{remark}[remark]{Remark}
\aliascntresetthe{remark}
\crefname{remark}{remark}{remarks}
\Crefname{remark}{Remark}{Remarks}

\def\no{\noindent}
\def\beq{\begin{eqnarray}}
\def\eeq{\end{eqnarray}}
\def\be*{\begin{eqnarray*}}
\def\ee*{\end{eqnarray*}}

\def \E{\mathbb{E}}
\def \F{\mathbb{F}}
\def \G{\mathbb{G}}
\def \H{\mathbb{H}}

\def \M{\mathbb{M}}
\def \N{\mathbb{N}}

\def \P{\mathbb{P}}

\def \R{\mathbb{R}}
\def \S{\mathbb{S}}

\def \U{\mathbb{U}}

\def \X{\mathbb{X}}

\def \Pr{\mathrm{P}}

\def\Ac{{\cal A}}

\def\Cc{{\cal C}}

\def\Fc{{\cal F}}
\def\Gc{{\cal G}}
\def\Hc{{\cal H}}

\def\Jc{{\cal J}}

\def\Lc{{\cal L}}

\def\Pc{{\cal P}}

\def\Rc{{\cal R}}
\def\Sc{{\cal S}}

\def\Uc{{\cal U}}
\def\Vc{{\cal V}}
\def\Wc{{\cal W}}

\def\Pb{{\overline \P}}

\def\x{\times}

\def\Om{\Omega}

\def\om{\omega}

\def\Gch{\widehat{\Gc}}

\def\0{\mathbf{0}}

\def \mub{\overline{\mu}}

\def \muh{\widehat{\mu}}

\def \mut{\widetilde{\mu}}
\def \nub{\overline{\nu}}

\def\normeL2#1{\left\|{#1}\right\|_{L^2}}

\def\Vh{\widehat V}

\def \Lim{\displaystyle\lim}

\def \alphab {\boldsymbol{\alpha}}

\def \Xbb{\mathbf{X}}

\def \Ybb{\mathbf{Y}}

\def \Qr{\mathrm{Q}}

\def \1{\mathds{1}}

\def \alphab {\boldsymbol{\alpha}}
\def \betab {\boldsymbol{\beta}}

\def \xbb {\boldsymbol{x}}

\def \Xbb{\mathbf{X}}

\def \Ybb{\mathbf{Y}}

\def\Er{{\rm E}}
\def\Gr{{\rm G}}
\def\Ir{{\rm I}}

\def\Rr{{\rm R}}
\def\Kr{{\rm K}}

\def \Qr{\mathrm{Q}}

\def \Rr{\mathrm{R}}

\DeclareUnicodeCharacter{014D}{\=o}
 \title{ Non--exchangeable mean field games with moderate interactions and common noise
 
 }

\author{
 Mao Fabrice Djete\footnote{\'Ecole Polytechnique Paris, Centre de Math\'ematiques Appliqu\'ees, mao-fabrice.djete@polytechnique.edu. This work benefits from the financial support of the Chairs {\it Financial Risk} and {\it Finance and Sustainable Development}} 
    }
             \date{\today}

\begin{document}

\maketitle
 
\begin{abstract}
We study mean field games for large non--exchangeable populations with moderate local interactions and common noise. The finite--player system is driven by two complementary interaction mechanisms : a graphon--type structure, which encodes heterogeneous large--scale interactions between agents, and a rescaled local kernel, which produces a density-dependent interaction term in the limit. The limiting model is a non--exchangeable mean field game in which the representative player is indexed by a label \(u\in[0,1]\), interacts through a graphon--weighted local density, and is affected by a graphon--induced environment law.

We introduce a relaxed formulation of the limiting mean field game, adapted to the presence of common noise, and prove existence under general continuity and non--degeneracy assumptions. Under additional convexity assumptions, relaxed equilibria can be realized in strict form. In the deterministic case without common noise, we obtain deterministic equilibria and provide a probabilistic characterization of strict equilibria through a nonlinear Feynman--Kac representation.

We then establish the asymptotic connection with the finite--player game. We prove that every limit point of approximate closed--loop Nash equilibria is a relaxed solution of the limiting mean field game, and that the corresponding averaged equilibrium payoffs converge. Conversely, every relaxed mean field game equilibrium can be approximated by Markovian approximate Nash equilibria of the finite--player systems. These results give a complete asymptotic characterization of equilibrium behavior for non--exchangeable games with moderate interactions and common noise.
\end{abstract}

\vspace{3mm}

\vspace{3mm}
\no{\bf MSC2010.} 60K35, 60H30, 91A13, 91A23, 91B30.

\section{Introduction}\label{sec:intro}

Classical mean field game (MFG) theory provides a powerful framework for analyzing strategic interactions in large populations of agents. In the standard formulation, each player interacts with the rest of the population only through the empirical distribution of states (and, in some variants, controls), which leads in the large population limit to a McKean--Vlasov type equilibrium model. This symmetric dependence implies that players are exchangeable in law and is one of the fundamental structural features underlying the classical theory. It is precisely this exchangeability that makes the limit tractable and leads to elegant descriptions in terms of coupled stochastic equations, forward--backward systems, or partial differential equations on the space of measures. We refer to the seminal works of \citeauthor*{lasry2007mean}~\cite{lasry2007mean}, \citeauthor*{huang2006large}~\cite{huang2006large,huang2007nash}, and to the monograph of \citeauthor*{carmona2018probabilisticI} \cite{carmona2018probabilisticI,carmona2018probabilisticII}.

\medskip

However, many systems of practical and theoretical interest fall outside the exchangeable paradigm. In numerous applications, agents are heterogeneous and interact through structured networks, spatially localized couplings, or asymmetric environments. Examples include social or communication networks, interacting financial institutions with heterogeneous exposures, multi--class population models, and large decentralized systems in which the influence of an agent depends not only on the empirical distribution of the population, but also on its position inside a non-homogeneous interaction architecture. In such situations, the classical mean field approximation based solely on a symmetric empirical measure can fail to capture the relevant interaction patterns.

\medskip

Motivated by these considerations, a growing literature has developed mean field models beyond exchangeability. On the one hand, based on the idea of graphon (see \citeauthor*{Lovsz2012LargeNA} \cite{Lovsz2012LargeNA}), graph--limit methods have emerged as a flexible framework to encode heterogeneous and asymmetric interaction structures in large populations; see, among others, \citeauthor*{ErhanGraphon2023} \cite{ErhanGraphon2023}, \citeauthor*{jabin2024meanfieldlimitnonexchangeablesystems} \cite{jabin2024meanfieldlimitnonexchangeablesystems}, \citeauthor*{crucianelli2024interactingparticlesystemssparse} \cite{crucianelli2024interactingparticlesystemssparse}, \citeauthor*{Coppini2024NonlinearGM} \cite{Coppini2024NonlinearGM}, \citeauthor*{bayraktar2026graphonparticlesystemscommon} \cite{bayraktar2026graphonparticlesystemscommon}. In the controlled setting, related non--exchangeable mean field control models have recently been investigated in \citeauthor*{decrescenzo2024meanfieldcontrolnonexchangeable} \cite{decrescenzo2024meanfieldcontrolnonexchangeable}, \citeauthor*{cao2025probabilisticanalysisgraphonmean} \cite{cao2025probabilisticanalysisgraphonmean}, \citeauthor*{kharroubi2025stochasticmaximumprincipleoptimal} \cite{kharroubi2025stochasticmaximumprincipleoptimal}, $\cdots$. On the game--theoretic side, graphon--type formulations and their equilibrium analysis have also attracted considerable attention; see for instance \citeauthor*{Caines2020GraphonMF} \cite{Caines2020GraphonMF}, \citeauthor*{Aurell2021StochasticGG} \cite{Aurell2021StochasticGG}, \citeauthor*{10.1007/s00245-023-09996-y} \cite{10.1007/s00245-023-09996-y}, \citeauthor*{Gao2020LinearQG} \cite{Gao2020LinearQG}, \citeauthor*{repec:spr:finsto:v:28:y:2024:i:2:d:10.1007_s00780-023-00527-9} \cite{repec:spr:finsto:v:28:y:2024:i:2:d:10.1007_s00780-023-00527-9}, \citeauthor*{doi:10.1287/moor.2022.1329} \cite{doi:10.1287/moor.2022.1329}, $\cdots$.

\medskip

On the other hand, a distinct line of research studies systems with \emph{moderate interactions}, in which the interaction is local in space and is described through a mesoscopic kernel approximation of the empirical density. Such models go back at least to the works of \citeauthor*{oelschlager1985law} \cite{oelschlager1985law} and \citeauthor*{meleard1987propagation} \cite{meleard1987propagation}, and were further developed in, among others, \citeauthor*{jourdain1998propagation} \cite{jourdain1998propagation}. In these models, the interaction remains sufficiently averaged to yield a law of large numbers, while becoming local enough to produce in the limit a genuine local density dependence rather than the usual type of non-local weak interactions. In the game--theoretic direction, the literature is scarce, the only works we are aware of are the ones from \citeauthor*{Cardaliaguet2017} \cite{Cardaliaguet2017} and \citeauthor*{Flandoli2022} \cite{Flandoli2022}, who study \(n\)--player games and mean field games of moderate interactions in a symmetric i.e. exchangeable setting.
\medskip

The purpose of this paper is to combine these two non--classical directions within a single mean field game framework, and to do so \emph{in the presence of common noise}. More precisely, we study a large population game in which agents interact through both:
\begin{itemize}
    \item a \emph{non--exchangeable graphon environment}, encoding heterogeneous and possibly asymmetric large--scale interactions;
    \item a \emph{moderate local interaction term}, obtained by smoothing the empirical configuration through a rescaled kernel \(V_n\), which captures short--range or mesoscopic effects;
    \item a \emph{common noise}, which simultaneously affects all players.
\end{itemize}
The resulting limiting model is a non--exchangeable MFG in which the representative agent depends simultaneously on a graphon--weighted local density and on a graphon--induced environment law, while the equilibrium itself is described by a \emph{random} flow of conditional laws. To the best of our knowledge, this is the first rigorous derivation of such a mean field game from an underlying finite--player system, together with the convergence of approximate Nash equilibria.

\medskip

To describe the finite--player model informally, let \(n \ge 1\), and let \((\xi^n_{ij})_{1 \le i,j \le n}\subset \Er\) be a deterministic matrix encoding the interaction structure, where \(\Er\) is a compact metric space. Each player \(i\) chooses a feedback control \(\alpha^{i,n}\), and the corresponding state process satisfies
\[
\mathrm{d}X^{i,n}_t
=
b\Bigl(t,X^{i,n}_t,\widehat V^{n}_{i,t}(X^{i,n}_t),\Rr^n_{i,t},\alpha^{i,n}(t,X^n)\Bigr)\,\mathrm{d}t
+
\sigma(t,X^{i,n}_t)\,\mathrm{d}W^i_t
+
\sigma_\circ\,\mathrm{d}W^\circ_t,
\]
where
\[
\Rr^n_{i,t}
=
\frac1n\sum_{j=1}^n \delta_{(\xi^n_{ij},X^{j,n}_t)}
\]
describes the heterogeneous environment perceived by player \(i\), while
\[
\widehat V^{n}_{i,t}(x)
=
\frac1n\sum_{j=1}^n \xi^n_{ij} V_n(x-X^{j,n}_t),
\qquad
V_n(x)=\frac1{\varepsilon_n^d}V\!\left(\frac{x}{\varepsilon_n}\right),
\]
is the moderate local interaction term. Here \(V\) is a bounded probability density and \((\varepsilon_n)_{n\ge1}\) is a positive sequence such that
\[
\varepsilon_n \longrightarrow 0,
\qquad
\frac{1}{n\varepsilon_n^d}\longrightarrow 0
\]
where $d \ge 1$ is the dimension of the state process $X^{i,n}_t$ i.e. $X^{i,n}_t \in \R^d$.
The first condition localizes the interaction, while the second is the classical moderate--interaction regime ensuring that the kernel still averages over sufficiently many neighbors. This scaling is precisely what produces, in the limit, a dependence on a \emph{local density} rather than the classical non--local dependence.

\medskip

This feature is central to the present work. In contrast with standard mean field games, the limiting representative agent does not only feel a global mean field through a law \(\overline{m}_t\), but also a graphon--weighted local density field. More precisely, if
\[
\overline{m}_t(\mathrm{d}x,\mathrm{d}u)=p_{\overline{m}_t}(x,u)\,\mathrm{d}x\,\mathrm{d}u,
\]
with a graphon $\Gr$ which is a Borel measurable map from $[0,1]^2$ to $\Er$, we naturally define the graphon--weighted density
\[
\overline p_{\overline{m}_t}(x,u)
=
\int_0^1 \Gr(u,v)\,p_{\overline{m}_t}(x,v)\,\mathrm{d}v.
\]
In addition, the agent is affected by the graphon--induced environment law
\[
\Rr_m(u)
=
\mathcal{L}^m(S,\Gr(u,V)),
\qquad m=\mathcal{L}(S,V).
\]
Accordingly, the limiting MFG is neither a classical MFG game nor a purely graphon MFG in the usual sense : it combines a non--exchangeable graphon interaction with a density--type local effect arising from the moderate scaling.

\medskip

The presence of \emph{common noise} is also essential and deserves emphasis. In the absence of common noise, one may hope in favorable situations to work with deterministic density flows and deterministic fixed--point arguments. By contrast, once a common noise is introduced, the equilibrium flow becomes random and must be understood conditionally on the common filtration. In our setting, this means that the relevant limiting object is a random measure flow on \(\R^d\times[0,1]\), together with a density structure in the state variable and a heterogeneous dependence in the label variable. This substantially strengthens the compactness and identification problems, since one must simultaneously control the graphon heterogeneity, the moderate local averaging, and the conditional law structure generated by the common noise.

\medskip

From a mathematical perspective, this combination creates substantial difficulties. Even in the absence of strategic interactions, moderate interaction limits already require a careful analysis because the local empirical field must be shown to converge toward a density profile. In the game setting, one must additionally preserve the equilibrium structure along the limit. At the same time, the graphon component destroys exchangeability and forces one to keep track of the label variable \(U\in[0,1]\) carried by the representative agent. As a result, the limiting object is naturally measure--valued on \(\R^d\times[0,1]\), with a non--trivial density structure in the state variable and a heterogeneous dependence in the label variable. This places the problem outside the scope of standard compactness arguments used in classical MFG theory.

\medskip

A second difficulty is that our standing assumptions are intentionally low--regularity: the coefficients are only assumed to be continuous (or Lipschitz in the relevant mean field arguments), with a uniformly non--degenerate diffusion. In particular, we do not rely on global smoothness of the data, nor on a priori PDE regularity of the value function. Instead, our approach exploits the regularizing effect of the non--degenerate diffusion and works directly at the probabilistic level. In the deterministic case (without common noise), the moderate interaction term can be identified with a density through Fokker--Planck regularization, which allows us to formulate the equilibrium problem as a fixed--point problem on density flows. This is one of the key structural points of the paper. In the presence of common noise, this deterministic fixed--point structure is replaced by a random one using \emph{measurable selection} arguments borrowed from \citeauthor*{karoui2013capacities} \cite{karoui2013capacities}, and the limiting equilibrium is described through conditional laws.

{\color{black}

\medskip

We now summarize the main results of the paper. They naturally split into two layers: first, the well--posedness and structural analysis of the limiting mean field game; second, the asymptotic connection between the finite--player game and the limiting equilibrium problem.

\medskip

\noindent\textbf{(i) Well--posedness of the limiting MFG.}
We first introduce a relaxed formulation of the non--exchangeable mean field game with moderate interactions, which is the natural compactness framework in the presence of common noise. Under our standing continuity and non--degeneracy assumptions, we prove the existence of relaxed equilibria. Under a standard convexity condition on the coefficients, we further show that relaxed equilibria can be realized in strict form. In the absence of common noise, although equilibria can be random, we prove that there exists deterministic equilibria as fixed points of a density--valued map. We also establish uniqueness under a Lasry--Lions type monotonicity condition adapted to the present graphon and density--dependent framework, together with a uniqueness assumption on the best response against a given equilibrium flow.

\medskip

\noindent\textbf{(ii) Structural characterization and uniqueness.}
In the deterministic case, under a pointwise uniqueness condition for the maximizer of the Hamiltonian, we derive a probabilistic representation of strict equilibria through a Feynman--Kac type formula and a controlled forward dynamics. This yields a verification-type characterization of equilibria without requiring a global smooth solution theory for the associated HJB equation. 

\medskip

\noindent\textbf{(iii) Derivation from approximate Nash equilibria.}
We then turn to the finite--player game. We prove that any sequence of approximate Nash equilibria of the \(n\)--player system is asymptotically tight, and that every limit point induces a relaxed equilibrium of the limiting non--exchangeable MFG with moderate interactions. In particular, the limiting equilibrium retains both the graphon heterogeneity and the local density effect generated by the moderate scaling. We also show convergence of the averaged equilibrium payoffs toward the value of the limiting MFG.

\medskip

\noindent\textbf{(iv) Converse approximation by approximate Nash equilibria.}
Finally, we prove the converse result: every relaxed equilibrium of the limiting MFG can be approximated by a sequence of approximate Nash equilibria of the finite--player game. The corresponding averaged payoffs converge as well. Hence the limiting MFG provides a complete asymptotic characterization of equilibrium behavior in the underlying large population game.

\medskip

Taken together, these results yield a rigorous equilibrium theory for large non--exchangeable systems with moderate local interactions and common noise. In particular, they show that graphon heterogeneity, mesoscopic density effects, and random common environments can be incorporated simultaneously into a mean field game limit while preserving both the equilibrium structure and the asymptotic values.

}

\medskip
{\color{black} Although the main novelty of the paper lies in the simultaneous treatment of non--exchangeability, moderate interactions, and common noise, it is worth stressing that each of these features also leads, on its own, to contributions beyond the existing literature. From the non--exchangeable perspective, our framework allows for very general graphon structures and yields a complete characterization of the possible limits of finite--player Nash equilibria. In particular, we do not only construct approximate Nash equilibria from solutions of the limiting MFG; we also prove the converse direction, namely that any limit point of finite--player equilibria is a relaxed MFG solution.

\medskip
A similar comment applies to the moderate interaction component. Compared with the existing literature on MFGs with moderate interactions, the present paper works under more general coefficients, incorporates common noise, and establishes both directions of the finite-player/mean-field correspondence. An important point is that the convergence results are formulated for \emph{closed--loop} Nash equilibria, which are substantially more delicate than open--loop equilibria. Our condition on the moderate scale is less restrictive in the sense that we only require
\(
n\varepsilon_n^d\to\infty,
\)
without imposing the stronger scale restrictions appearing in some previous works.


\medskip
In this respect, the paper also contributes to a broader question in mean field game theory: the characterization of limits of \emph{Markovian closed-loop} Nash equilibria. This problem is known to be delicate and has been addressed only partially in the general MFG literature; see, for instance, \citeauthor*{Lacker-closedloop} \cite{Lacker-closedloop}, \citeauthor*{closed-loop-MFG_MDF} \cite{closed-loop-MFG_MDF}, and \citeauthor*{leflemclosed2023} \cite{leflemclosed2023}. The present work provides, in the non--exchangeable moderate-interaction setting, a complete description of these limits and a converse recovery result by Markovian approximate Nash equilibria. }

\medskip
The paper is closely related in spirit to \citeauthor*{djete2025nonexchangeablemeanfieldcontrol}   \cite{djete2025nonexchangeablemeanfieldcontrol}, where non--exchangeable interactions are also central; however, the present setting is fundamentally different: we consider a strategic equilibrium problem rather than a cooperative control problem, and the interaction term is not itself controlled, but arises through a fixed heterogeneous graphon combined with a local moderate kernel.

\medskip

The rest of the paper is organized as follows. In \Cref{sec:main}, we introduce the MFG formulation, define relaxed and strict equilibria, and state the main existence, uniqueness, and characterization results. We then present the finite--player game and formulate the two asymptotic results linking approximate Nash equilibria and mean field equilibria. \Cref{sec:proofs} is devoted to the proofs of the main results, beginning with the deterministic case without common noise and progressively extending the analysis to finite--support graphons, to general graphons, and finally to the full setting with common noise. The final part of the paper establishes the convergence from the finite--player system to the limiting MFG and the converse approximation result.

\medskip
{\bf \large Notations}.
	$(i)$
	Given a {\color{black}Polish} space $(E,\Delta)$ and $p \ge 1,$ we denote by $\Pc(E)$ the collection of all Borel probability measures on $E$,
	and by $\Pc_p(E)$ the subset of Borel probability measures $\mu$ 
	such that $\int_E \Delta(e, e_0)^p  \mu(de) < \infty$ for some $e_0 \in E$. For $p=0$, we simply set $\Pc_0(E):=\Pc(E)$. We denote by $\Wc_q$ the corresponding $q$--Wasserstein distance:
\[
\Wc_q(\gamma,\gamma')^{1 \vee q} 
:=
\inf_{\pi \in \Pi(\gamma, \gamma')}  
\int_{E \x E} \Delta_q(e,e') ~\pi( \mathrm{d}e, \mathrm{d}e'),
~\Delta_q(e,e'):=\Delta(e,e')^q\1_{\{ q \ge 1\}}+1\wedge \Delta(e,e')\1_{\{q=0\}},
\]
with $\Pi(\gamma, \gamma')$ the collection of all coupling probability measures with marginals $\gamma$ and $\gamma'$.



\medskip
    \noindent $(ii)$ Let $(S,\Delta)$ and $(S',\Delta')$ be two Polish space with the Borel $\sigma$--fields $\Fc$ and $\Fc'$ respectively. An application $V:S \to S'$ will be called universally measurable if $V$ is a measurable map from $(S, \Fc^U)$ to $(S',\Fc')$ where $\Fc^U$ is the universal completion of $\Fc$ i.e. $\Fc^U:=\cap_{\P \in \Pc(S)} \Fc^{\P}$ where $\Fc^\P$ is the $\P$--completed $\sigma$--field of $\Fc$.  
	Given a probability space $(\Om, \Hc, \P)$ supporting a sub--$\sigma$--algebra $\Gc \subset \Hc$ then for a Polish space $E$ and any random variable $\xi: \Om \longrightarrow E$, both the notations $\Lc^{\P}( \xi | \Gc)(\om)$ and $\P^{\Gc}_{\om} \circ (\xi)^{-1}$ are used to denote the conditional distribution of $\xi$ knowing $\Gc$ under $\P$.

\medskip
	\noindent $(iii)$	
	For a Polish space $E$, 
    we denote by $\M(E)$ the space of all Borel measures $q( \mathrm{d}t,  \mathrm{d}e)$ on $[0,T] \x E$, 
	whose marginal distribution on $[0,T]$ is the Lebesgue measure $ \mathrm{d}t$, 
	that is to say $q( \mathrm{d}t, \mathrm{d}e)=q(t,  \mathrm{d}e) \mathrm{d}t$ for a family $(q(t,  \mathrm{d}e))_{t \in [0,T]}$ of Borel probability measures on $E$.

\medskip
    \noindent $(iv)$
	Let $\N^*$ denote the set of positive integers. Let $T > 0$ and $(\Sigma,\rho)$ be a Polish space, we denote by $C([0,T]; \Sigma)$ the space of all continuous functions on $[0,T]$ taking values in $\Sigma$.
	When $\Sigma=\R^k$ for some $k\in\N^*$, we simply write $\Cc^k := C([0,T]; \R^k)$. 
	With $k \in \N^*$,
	a map $h:[0,T] \x \R^k \x C([0,T];\Sigma)$ is called progressively Borel measurable if it verifies $h(t,x,\pi)=h(t,x,\pi_{t \wedge \cdot}),$ for any $(t,x,\pi) \in [0,T]\x \R^k \x C([0,T];\Sigma).$

{\color{black}

\section{Problem formulation and main results} \label{sec:main}

\medskip

In this section, we introduce the limiting mean field game associated with the non--exchangeable moderate interaction model, together with the corresponding finite--player game, and state the main results of the paper. The presentation is organized in two steps. We first formulate the limiting mean field game and establish its well--posedness properties (existence, uniqueness and characterization). We then turn to the finite--player system and state the two asymptotic results linking approximate Nash equilibria and mean field game equilibria.

\medskip

A central feature of the model is the simultaneous presence of three ingredients: a graphon--type interaction structure, a moderate local interaction term, and a possible common noise. The graphon records the non--exchangeable position of an agent within the population, the moderate interaction produces a local density dependence in the limit, and the common noise makes the equilibrium flow random. The definitions below are designed to capture these three effects within a unified probabilistic framework.

\medskip

Let \(T>0\) be a maturity and \(d \in \N^\star\) a dimension. Unless specified otherwise, all random variables are defined on a fixed filtered probability space 
\[
{\color{black}(\Omega,\H,(\Hc_t)_{t\in[0,T]},\P)}
\]
satisfying the usual conditions. This probability space also contains as many random variables as we want in the sense that: each time we need a sequence of
independent uniform random variables or Brownian motions, we can find them on $\Om$ without mentioning an enlarging of the space.
We denote by \((W,W^\circ)\) an \(\R^d\times\R^d\)--valued \(\H\)--Brownian motion, by \(U\) an \([0,1]\)--valued random variable uniformly distributed on \([0,1]\) and independent of \((W,W^\circ)\), and by \(\xi\) an \(\Hc_0\)--measurable random variable with law \(\nu\in\Pc(\R^d)\). The random variable \(U\) should be interpreted as the label of the representative agent in the graphon environment and the variable $\xi$ as the initial random variable. We also fix a compact \(\Er \subset \R_+\)\footnote{Considering $\Er$ as a subset of $\R_+$ is a simplification. All results remain valid if $\Er$ is a compact metric space of a Polish space.}, which represents the range of the interaction structure.

\medskip

The dynamics and rewards of the model are described by the bounded measurable maps
\[
(b,L):[0,T]\times\R^d\times\R\times\Pc(\Er\times\R^d)\times A
\;\longrightarrow\;
\R^d\times\R,
\]
together with
\[
\sigma:[0,T]\times\R^d\longrightarrow \S^d,
\qquad
\sigma_\circ\in\S^d,
\qquad
g:\R^d\times\Pc(\Er\times\R^d)\longrightarrow\R,
\]
where \(\S^d\) denotes the space of \(d\times d\) matrices. Here \(b\) and \(L\) denote respectively the drift and the running reward, while \(g\) is the terminal reward. The first mean field argument is scalar and will encode the graphon--weighted local density generated by the moderate interaction regime; the second one is measure--valued and describes the heterogeneous environment perceived by the representative agent through the graphon.

\medskip
Throughout the paper, we impose the following standing assumptions.

\begin{assumption}\label{assum:main_cond}
\begin{enumerate}
    \item[\textnormal{(i)}]
    The drift \( b(t,x,p,r,a)\) is Lipschitz continuous in \((p,r)\) uniformly in \((t,x,a)\), the diffusion coefficient $\sigma(t,x)$ is Lipschitz continuous in $x$ uniformly in $t$, and the maps
    \[
    (t,x,p,r,a)\longmapsto \bigl((b,L)(t,x,p,r,a),\,g(x,r)\bigr)
    \]
    are continuous in \((x,p,r,a)\) for each fixed \(t\in[0,T]\).

    \item[\textnormal{(ii)}]
    \textbf{Non--degeneracy:} there exists \(\theta>0\) such that
    \[
    \theta \Ir_d \le \sigma\sigma^\top(t,x),
    \qquad \forall (t,x)\in[0,T]\times\R^d.
    \]
\end{enumerate}
\end{assumption}

\medskip

The above assumptions ensure that the limiting model is mathematically well posed and that the graphon and moderate interaction terms can be handled within a unified probabilistic framework. We now turn to the formulation of the limiting mean field game. Since the relaxed setting is the natural compactness framework, we begin with the relaxed equilibrium notion.

\subsection{The mean field game formulation}

\medskip

Let \(\Gr:[0,1]^2\to\Er\) be a graphon. The variable \(u\in[0,1]\) labels the representative agent, while the graphon value \(\Gr(u,v)\) describes the interaction intensity between labels \(u\) and \(v\). In the limiting model, the representative agent therefore depends not only on its own state, but also on its graphon label and on the random flow of population measures.

\medskip

We denote by \(\M\) the collection of Borel measurable maps
\[
\Lambda^\beta:[0,T]\times\R^d\times[0,1]\times \Pc(\R^d\times[0,1])\longrightarrow \Pc(A).
\]
An element \(\Lambda^\beta\in\M\) should be understood as a relaxed feedback rule: at time \(t\), for state \(x\), label \(u\), and population profile \(\overline m\), the control is randomized according to the probability measure \(\Lambda^\beta(t,x,u,\overline m)\).

\medskip

Let \(\overline m\) be a  probability measure on \(\R^d\times[0,1]\) whenever $m$ admits a density with respect to the Lebesgue measure on $\R^d \x [0,1]$, we will write
\[
\overline m(\mathrm{d}x,\mathrm{d}u)=p_{\overline m}(x,u)\,\mathrm{d}x\,\mathrm{d}u
\]
where $p : \Pc(\R^d \x [0,1]) \x \R^d \x [0,1] \to \R_+$ is a Borel measurable map. Most probability measures appearing below are time marginals, possibly conditional time marginals, of non--degenerate SDEs. Under our standing non--degeneracy assumption, such marginals admit densities with respect to the Lebesgue measure; see, for instance, \citeauthor*{krylov1980controlled} \cite[Section 2, Chapter 2, Theorem 4]{krylov1980controlled}. We shall use this fact throughout the paper.  This density consideration is one of the key signatures of the moderate interaction regime: in contrast with standard MFG models, the local interaction term survives in the limit through a density field.

\medskip

Given \(\Lambda^\beta\in\M\) and an $\H$--adapted $\Pc(\R^d \x [0,1])$--valued continuous process, i.e. a flow,  \(\mub=(\mub_t)_{t\in[0,T]}\), we denote by \((X^{\mub,\beta},U):=(X,U)\) a pair of random variables such that
\[
U\sim {\rm Unif}([0,1]),
\qquad
(X_0,U)\perp (\mub,W^\circ)\perp W,
\qquad
X_0=\xi,
\]
and satisfying
\begin{align} \label{eq:mc_kean_representative}
    \mathrm{d}X_t
    &=
    \int_A b\Bigl(t,X_t,\overline p_{\mub_t}(X_t,U),\Rr_{\mub_t}(U),a\Bigr)\,
    \Lambda^\beta(t,X_t,U,\mub_t)(\mathrm{d}a)\,\mathrm{d}t
    +\sigma(t,X_t)\,\mathrm{d}W_t
    +\sigma_\circ\,\mathrm{d}W^\circ_t.
\end{align}
Here the two mean field inputs are given by
\begin{align*}
    \overline p_{\mub_t}(x,u)
    &:=
    \int_0^1 \Gr(u,v)\,p_{\mub_t}(x,v)\,\mathrm{d}v,
    \\
    \Rr_{\mub_t}(u)
    &:=
    \Lc^{\mub_t}(S,\Gr(u,V)),
    \qquad \text{whenever } \mub_t=\Lc(S,V).
\end{align*}
The quantity \(\overline p_{\mub_t}(x,u)\) represents the graphon--weighted local density felt by an agent of label \(u\) at position \(x\), while \(\Rr_{\mub_t}(u)\) is the law of the heterogeneous environment perceived by that agent through the graphon interaction. The first term captures the moderate local interaction effect, whereas the second retains the non--exchangeable large--scale structure.

\medskip

The associated payoff functional is
\begin{align*}
    J_{\mub}(\Lambda^\beta)
    :=
    \E\Bigg[
        g\bigl(X_T,\Rr_{\mub_T}(U)\bigr)
        +
        \int_0^T\int_A
        L\Bigl(t,X_t,\overline p_{\mub_t}(X_t,U),\Rr_{\mub_t}(U),a\Bigr)\,
        \Lambda^\beta(t,X_t,U,\mub_t)(\mathrm{d}a)\,\mathrm{d}t
    \Bigg].
\end{align*}

\medskip

When common noise is present, the equilibrium flow is random and should be understood conditionally on the common filtration. This is the reason why the consistency condition below is formulated in terms of conditional laws. To make this precise, we introduce the filtration \(\G=(\Gc_t)_{t\in[0,T]}\) defined by
\[
\Gc_t
:=
\sigma\bigl\{(\mub_s,\sigma_\circ W^\circ_s):\,s\in[0,t]\bigr\},
\qquad t\in[0,T].
\]

\begin{definition}
A flow of probability measures \(\mub=(\mub_t)_{t\in[0,T]}\) is called a relaxed {\rm MFG} solution associated with \(\Lambda^\alpha\in\M\) if the following two conditions hold:
\begin{enumerate}
    \item \textbf{Optimality:} for every \(\Lambda^\beta\in\M\),
    \[
    J_{\mub}(\Lambda^\beta)\le J_{\mub}(\Lambda^\alpha).
    \]

    \item \textbf{Consistency:} for every \(t\in[0,T]\),
    \[
    \Lc(\mub_t)=\Lc\Bigl(\Lc(X_t^{\mub,\alpha},U\mid \Gc_t)\Bigr).
    \]
\end{enumerate}
\end{definition}

\begin{remark}
The optimality condition means that, when the population flow \(\mub\) is fixed, the control kernel \(\Lambda^\alpha\) is a best response among all admissible relaxed feedbacks. The consistency condition identifies the law of the candidate equilibrium flow with the law of the conditional distribution generated by the optimally controlled representative agent.

This formulation is slightly weaker than the usual common--noise MFG fixed--point condition, where one often requires the almost sure identity
\[
\mub_t=\Lc(X_t^{\mub,\alpha},U\mid \Gc_t),
\qquad t\in[0,T],
\]
see, for instance, {\rm \cite{Lacker_carmona_delarue_CN}}. Here we only impose equality in distribution:
\[
\Lc(\mub_t)
=
\Lc\Bigl(\Lc(X_t^{\mub,\alpha},U\mid \Gc_t)\Bigr).
\]
This weaker formulation is sufficient for our purposes. It is stable under the compactness arguments used below and is exactly the form obtained when passing to the limit along subsequences of finite--player Nash equilibria.
\end{remark}


\medskip

A relaxed MFG solution \(\mub\) associated with \(\Lambda^\alpha\) is said to be a strict MFG solution if
\[
\Lambda^\alpha(t,x,u,\overline m)(\mathrm{d}a)
=
\delta_{\hat\alpha(t,x,u,\overline m)}(\mathrm{d}a)
\]
for some Borel map
\[
\widehat\alpha:[0,T]\times\R^d\times[0,1]\times\Pc(\R^d\times[0,1])\longrightarrow A.
\]
As usual, strict controls are obtained from relaxed ones under a suitable convexity condition of Filippov type.

\begin{assumption} \label{assum:conv_cond}
For each \((t,x,p,r)\), the set
\[
\Bigl\{\bigl(b(t,x,p,r,a),y\bigr):\,a\in A,\ y\le L(t,x,p,r,a)\Bigr\}
\]
is convex.
\end{assumption}

\medskip

We can now state the first well--posedness result for the limiting equilibrium problem.

\begin{theorem} \label{thm:existence}
Under {\rm \Cref{assum:main_cond}}, there exists at least one relaxed {\rm MFG} solution. Moreover, if {\rm \Cref{assum:conv_cond}} is satisfied, then one can construct from a relaxed {\rm MFG} solution a strict {\rm MFG} solution.
\end{theorem}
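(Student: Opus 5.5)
The plan follows the roadmap announced in the introduction: first treat the case without common noise through a deterministic fixed point on density flows, then extend to finite-support graphons and general graphons, and finally to $\sigma_\circ\neq 0$ by discretizing the common noise and passing to the limit.

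\textbf{Step 1: no common noise, step graphons.} Suppose $\sigma_\circ=0$ and $\Gr$ is a step graphon, constant on the blocks $I_k\times I_l$ of a partition of $[0,1]$ into intervals. For a deterministic flow $\mub$ we take the value $\overline p_{\mub_t}(x,u)$, which depends on $u$ only through its block, together with the environment $\Rr_{\mub_t}(U)$. For each block this gives a standard stochastic control problem with bounded measurable coefficients and a non--degenerate diffusion. We then define the set--valued map $\Phi(\mub)$ as the set of laws $(\Lc(X^{\mub,\beta}_t,U))_t$ generated by optimal relaxed feedbacks.

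Two facts drive the fixed point. By \cite{krylov1980controlled}-type estimates (Krylov, Aronson), the time marginals have densities with uniform $L^q$ bounds and uniform H\"older or Nash-type regularity on $[\delta,T]$. They are also equicontinuous in time uniformly over controls. Hence we can take as state space a convex compact set $\mathcal K$ of density flows, with the topology of $L^1_{loc}$ convergence of densities plus weak convergence of marginals. On this set the map $\mub\mapsto\overline p_{\mub}$ is continuous in a sense strong enough to pass to the limit inside $b$ and $L$, using the Lipschitz continuity in $p$ and the uniform density bounds.

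We then need $\Phi$ to have nonempty, convex, closed graph. Convexity is obtained by working with relaxed controls in the weak (martingale problem) formulation, where the set of optimal laws is convex. The closed graph follows from stability of the optimal control problem under convergence of $(\overline p,\Rr)$, a Berge maximum theorem argument using compactness of relaxed controls. Kakutani--Fan--Glicksberg then gives a fixed point, and a Markovian projection (mimicking) turns the optimal relaxed control into a feedback $\Lambda^\alpha\in\M$ with the same marginals and payoff.

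\textbf{Step 2: general graphons.} Approximate $\Gr$ in $L^1([0,1]^2)$ by step graphons $\Gr^k$ and take the corresponding equilibria from Step 1. Uniform density estimates give tightness, and along a subsequence $\overline p^{\,k}\to\overline p$ in $L^1$. We show the limit is an equilibrium in two parts. Consistency passes to the limit by stability of the martingale problem. Optimality is obtained by comparing, for any fixed $\Lambda^\beta$, the payoffs in the $k$-th and limiting models; this is possible because the payoff is continuous under $L^1$ perturbations of the drift, by Girsanov and non--degeneracy.

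\textbf{Step 3: common noise.} Discretize $W^\circ$ on a finite time grid and to finitely many values, with the flow frozen between grid times. Conditionally on each discrete common-noise path we are back in Step 1 or Step 2, with countably many flows. A measurable selection of the equilibria as functions of the discrete path, following \cite{karoui2013capacities}, produces a random flow $\mub^k$ adapted to the discretized common filtration.

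We then pass to the limit as $k\to\infty$ in the joint law of $(\mub^k,W^\circ,X,U,\text{control})$. Conditional densities inherit the uniform estimates, so the joint laws are tight. Consistency holds in the weaker form $\Lc(\mub_t)=\Lc(\Lc(X_t,U\mid\Gc_t))$, which is exactly why the definition was weakened. Optimality is proved by approximating each competitor $\Lambda^\beta\in\M$ by controls that are continuous in $(x,\overline m)$ and again passing to the limit.

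We expect the \textbf{main obstacle} to be this step. The best response must be preserved in the limit while the density term $\overline p_{\mub_t}(X_t,U)$ depends on a random measure only in a discontinuous (density) way. Handling it requires a strong convergence of conditional densities, which we would derive from uniform Sobolev/H\"older bounds of the Fokker--Planck equation conditioned on $W^\circ$. It also requires the compatibility (immersion) condition between $\mub$ and $(W^\circ,W)$ to pass to the limit.

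\textbf{Strict solutions.} Under \Cref{assum:conv_cond}, given a relaxed $\Lambda^\alpha$, a measurable selection theorem (Filippov) yields a Borel $\widehat\alpha$ with
\[
b(\cdot,\widehat\alpha)=\int_A b\,\mathrm{d}\Lambda^\alpha
\qquad\text{and}\qquad
L(\cdot,\widehat\alpha)\ge\int_A L\,\mathrm{d}\Lambda^\alpha .
\]
The state dynamics are unchanged, so the flow is still consistent. The payoff does not decrease, so $\widehat\alpha$ is still optimal, and $\mub$ with $\delta_{\widehat\alpha}$ is a strict MFG solution.
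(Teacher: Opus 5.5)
Your Steps 1--2 and the strict-solution argument follow essentially the paper's route: Kakutani--Fan--Glicksberg on a H\"older-compact set of density flows, step-graphon approximation, and Filippov selection. Your variants there are sound and arguably cleaner than the paper's. Obtaining convexity of the best-response set from convexity of the set of optimal relaxed laws plus mimicking justifies what the paper asserts directly. Passing optimality to the graphon limit via Girsanov continuity of the payoff in the drift avoids the paper's Lipschitz-approximation-plus-density argument.

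The gap is in Step 3. The reduction ``conditionally on each discrete common-noise path we are back in Step 1 or Step 2'' is false, for three reasons:
\begin{itemize}
\item Conditioning on a discretization of $W^\circ$ does not remove $\sigma_\circ\,\mathrm{d}W^\circ$ from the dynamics.
\item The consistency condition becomes a conditional law given coarse information, i.e.\ a mixture over the unobserved part of $W^\circ$.
\item Since you want $\mub^k$ adapted to the discretized filtration, the best response at a given history depends on the flows along all continuation branches.
\end{itemize}
The equilibria for different histories are therefore coupled and cannot be chosen history by history. A measurable selection over countably many discrete paths is trivial and does not address this coupling. You would need a single fixed point on the product over histories, as in \cite{Lacker_carmona_delarue_CN}, followed by the limit $k\to\infty$. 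That limit requires stability of best responses when the random flow enters through its density, compatibility, and projection of the limiting control onto a kernel in $(t,x,u,\mub_t)$. This is exactly the step you flag as the main obstacle and leave open, so as written Step 3 does not establish existence.

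The missing idea is that the common noise is additive with constant $\sigma_\circ$, so no discretization is needed. The paper freezes the whole continuous path $c=\sigma_\circ W^\circ(\omega)$ and considers $X-c$. This process solves a game without common noise, with coefficients $b(t,y+c(t),\cdot)$ and $\sigma(t,y+c(t))$, and with density and environment computed from the measure shifted by $c(t)$. The standing assumptions and the H\"older bounds are invariant under this translation, so \Cref{thm:existence_mfg_no_common} gives a solution $(\overline m^c,\Gamma^c)$ for every $c$. The construction then proceeds as follows:
\begin{itemize}
\item The measurable selection theorem of \cite{karoui2013capacities} makes $c\mapsto(\overline m^c,\Gamma^c)$ universally measurable.
\item Translating back by $\sigma_\circ W^\circ_t$ produces $\mub$.
\item Conditional mimicking from \cite{Lacker-Shkolnikov-Zhang_2020} projects the path-dependent control onto a kernel in $\M$ and yields consistency in law, which is precisely the weakened definition.
\item Optimality is checked $\omega$ by $\omega$ against the frozen problem.
\end{itemize}
No limit of best responses is ever taken, and the density term only involves deterministic flows. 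The price, which the paper acknowledges, is that $\mub$ depends on the whole path of $W^\circ$ and need not be adapted to the common-noise filtration. A completed discretization argument would avoid that, but only after resolving the stability step you left open.
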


\begin{remark}
The above theorem provides, to the best of our knowledge, the first general existence result for mean field games simultaneously incorporating three key features: (i) density--dependent interactions, (ii) heterogeneous interactions encoded by a graphon structure, and (iii) the presence of common noise.

\medskip
From the perspective of moderate (local) interactions, our result extends the framework developed in {\rm \cite{Flandoli2022}} to a substantially richer setting, allowing for common noise and fully nonlinear coefficients, without relying on specific structural assumptions.

\medskip
From the viewpoint of heterogeneity, the theorem also appears to be the first to treat graphon--based interactions in the presence of common noise under such weak assumptions. In contrast with existing works, we do not impose restrictive regularity or structural conditions on the graphon, beyond measurability, which makes the result applicable to a wide class of heterogeneous networks.

\medskip
Overall, this theorem highlights that the combination of density dependence, network heterogeneity, and common noise can be handled within a unified probabilistic framework, while preserving existence of equilibria in both relaxed and strict formulations.
\end{remark}

\medskip

The next result gives a uniqueness criterion of Lasry--Lions type. The monotonicity condition below is adapted to the present setting, where the interaction enters simultaneously through the graphon--weighted density field and the graphon--induced environment law.

\begin{assumption} \label{assum:uniqueness_cond}
Assume that
\[
b(t,x,p,r,a)=\overline b(t,x,a),
\qquad
L(t,x,p,r,a)=\overline L(t,x,a)+\underline L(t,x,p,r)
\]
for some maps \(\overline b\), \(\overline L\), and \(\underline L\). Moreover, for any two flows of measures \((m_t)_{t \in [0,T]}\) and \((m'_t)_{t \in [0,T]}\),
\begin{align*}
    &\int_{\R^d\times[0,1]}
    \Bigl[g\bigl(x,\Rr_{m_T}(u)\bigr)-g\bigl(x,\Rr_{m'_T}(u)\bigr)\Bigr]
    \bigl(m_T-m'_T\bigr)(\mathrm{d}x,\mathrm{d}u)
    \\
    &\quad
    +
    \int_0^T
    \int_{\R^d\times[0,1]}
    \Bigl[
    \underline L\bigl(t,x,\overline p_{m_t}(x,u),\Rr_{m_t}(u)\bigr)
    -
    \underline L\bigl(t,x,\overline p_{m'_t}(x,u),\Rr_{m'_t}(u)\bigr)
    \Bigr]
    \bigl(m_t-m'_t\bigr)(\mathrm{d}x,\mathrm{d}u)\,\mathrm{d}t
    \le 0.
\end{align*}
In addition, for every flow \(\mub\), there exists a unique \(\Lambda^\star\in\M\) such that
\[
\sup_{\Lambda^\beta\in\M} J_{\mub}(\Lambda^\beta)=J_{\mub}(\Lambda^\star).
\]
\end{assumption}

\begin{proposition} \label{prop:uniqueness}
Under {\rm \Cref{assum:uniqueness_cond}}, there exists at most one {\rm MFG} solution. In that case, for every \(t\in[0,T]\),
\[
\Gc_t=\sigma\{\sigma_\circ W^\circ_s:\,s\in[0,t]\}.
\]
\end{proposition}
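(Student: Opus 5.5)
Our plan is to adapt the classical Lasry--Lions argument to the conditional (common noise) setting. Let \(\mub\) and \(\mub'\) be two relaxed MFG solutions, with associated optimal kernels \(\Lambda^\alpha\) and \(\Lambda^{\alpha'}\). By the uniqueness part of \Cref{assum:uniqueness_cond}, \(\Lambda^\alpha=\Lambda^\star[\mub]\) and \(\Lambda^{\alpha'}=\Lambda^\star[\mub']\). Since the drift \(b=\overline b(t,x,a)\) does not depend on the mean field, we realize all processes on a single space with the same \((\xi,U,W,W^\circ)\). We then compare the four state processes \(X^{\mub,\alpha}\), \(X^{\mub,\alpha'}\), \(X^{\mub',\alpha}\), \(X^{\mub',\alpha'}\), where \(X^{\mub,\beta}\) denotes the state driven by the kernel \(\Lambda^\beta(t,x,u,\mub_t)\).

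The first step is to rewrite the payoffs conditionally on the common noise. The coefficients have the separated form \(L=\overline L(t,x,a)+\underline L(t,x,p,r)\), and \(g\) depends on \(\mub\) only through \(\Rr_{\mub_T}(u)\). Using the tower property with respect to \(\Gc_t\), we write
\[
J_{\mub}(\Lambda)=\E\Big[\int \Big(g(x,\Rr_{\mub_T}(u))\Big)\,m^{\Lambda}_T(\mathrm{d}x,\mathrm{d}u)+\int_0^T\!\!\int \Big(\cdots\Big)\,m^{\Lambda}_t(\mathrm{d}x,\mathrm{d}u)\,\mathrm{d}t\Big],
\]
where \(m^{\Lambda}_t=\Lc(X_t,U\mid\Gc_t)\). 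Consistency says that \(m^{\alpha}_t\) has the same law as \(\mub_t\). To make the cross terms comparable, we would like the identity \(m^\alpha_t=\mub_t\) almost surely. We intend to obtain it by a preliminary reduction: replace \(\mub\) by \(\Lc(X^{\mub,\alpha}_t,U\mid\Gc_t)\), after checking through a measurable-selection and conditioning argument that the resulting flow is still a solution with the same law. Alternatively, we can argue directly through the joint law of \((\mub,W^\circ)\).

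The second step is the standard double optimality inequality:
\[
J_{\mub}(\Lambda^{\alpha})-J_{\mub}(\Lambda^{\alpha'})\ge 0,
\qquad
J_{\mub'}(\Lambda^{\alpha'})-J_{\mub'}(\Lambda^{\alpha})\ge 0 .
\]
The \(\overline L\)-parts involve only the control and the state. They cancel in the sum once we note that, because \(b\) does not depend on the measure, the state under kernel \(\Lambda^\alpha(\cdot,\mub_t)\) has the same conditional law whether the reference flow is \(\mub\) or \(\mub'\). This requires working on the product of the two common environments, conditioning on \((\mub,\mub',W^\circ)\) jointly, so that the kernel's dependence on the flow is frozen. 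Summing the two inequalities, the remaining terms form exactly the monotonicity expression of \Cref{assum:uniqueness_cond} with \(m=m^{\alpha}\) and \(m'=m^{\alpha'}\), integrated against \(\P\). That expression is \(\le 0\), so all the optimality inequalities must be equalities. Hence \(\Lambda^{\alpha'}\) is also optimal against \(\mub\). By uniqueness of the best response, \(\Lambda^{\alpha'}(\cdot,\mub_\cdot)=\Lambda^\alpha(\cdot,\mub_\cdot)\) in the relevant sense, so the controlled dynamics coincide and \(m^\alpha=m^{\alpha'}\). Therefore \(\Lc(\mub)=\Lc(\mub')\), which is the uniqueness statement.

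The last assertion concerns \(\Gc_t\). Given the unique solution \(\mub\), the best response \(\Lambda^\star\) is unique. The flow \(\widetilde\mub_t:=\Lc(X^{\mub,\star}_t,U\mid \sigma\{\sigma_\circ W^\circ_s, s\le t\})\) is then adapted to the common Brownian filtration. We would show it is again a solution, with the same law by uniqueness. We would then argue that \(\mub\) itself is a.s. a measurable functional of \(W^\circ\): two solutions can be built on the same space, one adapted to \(W^\circ\), and the joint-conditioning argument above forces them to coincide pathwise. This yields \(\Gc_t=\sigma\{\sigma_\circ W^\circ_s:\,s\le t\}\).

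The main obstacle is the cancellation and cross-term step: justifying the conditioning on both random flows simultaneously, since the relaxed definition only gives equality in law. We expect the weak formulation of consistency to be exactly where extra care, namely coupling the two solutions on a common space, is needed.
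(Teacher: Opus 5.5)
Your overall plan is the one the paper follows: substitute each optimal kernel into the other problem, cancel the $\overline L$ terms because $b=\overline b(t,x,a)$ is measure-free, condition jointly on both flows and $W^\circ$, use monotonicity plus uniqueness of the best response, and finish with a Yamada--Watanabe-type step. However, the two steps you add to close the holes do not work.

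\textbf{The a.s.\ consistency.} Replacing $\mub$ by $\widetilde{\mub}_t:=\Lc(X^{\mub,\alpha}_t,U\mid\Gc_t)$ does not give $m^\alpha_t=\mub_t$ a.s.
\begin{itemize}
\item It changes the environment. The rewards $g(\cdot,\Rr_{\widetilde{\mub}_T}(u))$, $\underline L(\cdot,\overline p_{\widetilde{\mub}_t},\Rr_{\widetilde{\mub}_t})$ and the kernel $\Lambda^\alpha(\cdot,\widetilde{\mub}_t)$ are no longer those for which optimality was checked.
\item Even if $\widetilde{\mub}$ were again a solution, its own consistency would again hold only in law, so the problem is merely shifted.
\item Monotonicity alone cannot upgrade the in-law condition. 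It does not exclude two flows $m^A\neq m^B$ that the best-response map exchanges. For such a pair, the sum of the two optimality gaps equals minus the monotonicity expression of \Cref{assum:uniqueness_cond} at $(m^A,m^B)$, hence is $\ge 0$, and no contradiction arises. If such a pair exists, a flow equal to $m^A$ or $m^B$ with probability $1/2$ each satisfies $\Lc(\mub_t)=\Lc(\Lc(X_t,U\mid\Gc_t))$, and your replacement merely swaps the two scenarios.
\end{itemize}
The comparison therefore has to be run with consistency in the strong form $\mub_t=\Lc(X^{\mub,\alpha}_t,U\mid\Gc_t)$ a.s. This is how the paper's proof uses it: it writes $\mub^i_t=\Lc(X^i_t,U\mid\Gc^i_t)$ directly, rather than the in-law form of the definition, and then enlarges the conditioning to $\widehat{\Gc}_t=\sigma\{\mub^1_s,\mub^2_s,\sigma_\circ W^\circ_s:\,s\le t\}$.

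A related point: on the coupled space, the cross competitor $\Lambda^{\alpha'}(t,x,u,\mub'_t)$ is a feedback in $\mub'_t$, not an element of $\M$ for the $\mub$-problem. So both the optimality inequality and the uniqueness of the best response are applied outside the class in which \Cref{assum:uniqueness_cond} states them. You flag this but do not resolve it; the paper passes over it as well.

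\textbf{The filtration statement.} The claim that $\widetilde{\mub}_t:=\Lc(X^{\mub,\star}_t,U\mid\sigma\{\sigma_\circ W^\circ_s,\,s\le t\})$ is again a solution is unsupported, for the same reason. It is a new environment, and proving that it is an equilibrium is essentially what you want to show.

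You do not need to build a $W^\circ$-adapted solution by hand. The paper invokes \cite[Theorem 1.5]{kurtz2014weak}: weak existence together with pathwise uniqueness forces every compatible solution to be a measurable, non-anticipative functional of $\sigma_\circ W^\circ$. Concretely:
\begin{itemize}
\item Put two copies of $\mub$ on one space, conditionally independent given $W^\circ$.
\item The comparison yields $\mub=\mub'$ a.s., so the conditional law of $\mub$ given $W^\circ$ is a Dirac mass.
\item Adaptedness follows because $\mub$ is $\H$-adapted while $W^\circ$ is an $\H$-Brownian motion.
\end{itemize}
This requires your uniqueness step to be stated as pathwise equality on the coupling, not only as $\Lc(\mub)=\Lc(\mub')$. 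Your argument would give pathwise equality once the a.s.\ identity is available.
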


\medskip

The equality of filtrations in the uniqueness regime reflects the fact that the equilibrium flow is then fully determined by the common noise. In particular, no additional endogenous randomness remains in the equilibrium once the common signal is fixed.

\medskip

We now turn to the deterministic case \(\sigma_\circ=0\), which plays a special role throughout the paper. Notice that, according to \Cref{thm:existence}, the equilibrium measure can still be random in this regime. However, deterministic solutions exist which is the focus of the next part. In this setting, we can also provide a more explicit characterization of strict equilibria.

\medskip

We say that a flow of probability measures \(\mub\) is deterministic if, for each \(t\in[0,T]\), the random variable \(\mub_t\) is measurable with respect to the trivial \(\sigma\)--field \(\{\Omega,\emptyset\}\). For \((t,x,p,r,z,a)\), define
\[
h(t,x,p,r,z,a)
:=
b(t,x,p,r,a)\cdot z\,\sigma(t,x)^{-1}
+
L(t,x,p,r,a),
\]
and the Hamiltonian
\[
H(t,x,p,r,z):=\sup_{a\in A} h(t,x,p,r,z,a).
\]

\begin{assumption} \label{assum:hamiltonian_cond}
For each \((t,x,p,r,z)\), the set
\[
A(t,x,p,r,z):=\{a\in A:\ h(t,x,p,r,z,a)=H(t,x,p,r,z)\}
\]
is nonempty and a singleton. For each \((t,x,p,r,z)\), let \(\widehat\alpha(t,x,p,r,z)\in A(t,x,p,r,z)\) denote the unique maximizer.
\end{assumption}

\medskip

In the absence of common noise, one can strengthen the existence result by selecting deterministic equilibria.

\begin{theorem} \label{thm:existence_mfg_no_common}
Assume that \(\sigma_\circ=0\). Under {\rm \Cref{assum:main_cond}}, there exists at least one \emph{deterministic} relaxed {\rm MFG} solution. Moreover, if either {\rm \Cref{assum:conv_cond}} or {\rm \Cref{assum:hamiltonian_cond}} holds, then there exists at least one \emph{deterministic} strict {\rm MFG} solution.
\end{theorem}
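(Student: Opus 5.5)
The plan is to obtain a deterministic equilibrium as a fixed point of a map on deterministic flows of densities, using the regularizing effect of the non--degenerate diffusion and a Kakutani--Fan--Glicksberg argument. Since \(\sigma_\circ=0\), the filtration \(\G\) is generated by \(\mub\) alone, so for a deterministic \(\mub\) it is trivial and the consistency condition reduces to \(\mub_t=\Lc(X^{\mub,\alpha}_t,U)\) for all \(t\).

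\textbf{Step 1: the convex compact set of flows.} Let \(\Kc\) be the set of deterministic flows \(\mub=(\mub_t)_{t\in[0,T]}\) of the form \(\mub_t=\Lc(X_t,U)\), where \(X\) solves an SDE with bounded drift (bounded by \(\|b\|_\infty\)), diffusion \(\sigma\) and initial law \(\nu\). Conditionally on \(U=u\), Krylov's estimates (\cite{krylov1980controlled}) and parabolic Aronson/Hölder-type bounds for non--degenerate diffusions with bounded drift give density bounds on \(p_{\mub_t}(\cdot,u)\) that are uniform in \(u\). These bounds are \(L^q\)--integrable in \((t,x)\) and locally Hölder in \(x\) on \([\delta,T]\) for any fixed \(\delta>0\). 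We equip \(\Kc\) with the topology of weak convergence of \(\mub\) together with strong \(L^1_{loc}\) convergence of the graphon--averaged densities \(\overline p_{\mub}\), and show that \(\Kc\) is convex and compact in it.

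The main obstacle is this strong convergence. The labels \(v\) enter only measurably through \(\Gr\), so weak convergence in \(v\) must be upgraded via the averaging \(\int_0^1\Gr(u,v)p(x,v)\,\mathrm{d}v\), which is a compact-type operator in \(v\). I would first treat graphons taking finitely many values (step functions), where the label space is effectively finite, and then approximate a general \(\Gr\) in \(L^1([0,1]^2)\). Equicontinuity in \(x\) from the parabolic estimates, combined with weak convergence in \(v\), should then give pointwise convergence of \(\overline p_{\mub^k_t}(x,u)\) for a.e. \((t,x,u)\). Since \(b\) is Lipschitz in \((p,r)\), this convergence is enough to pass to the limit in the drift.

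\textbf{Step 2: best response correspondence and closed graph.} For \(\mub\in\Kc\), let \(\Phi(\mub)\) be the set of flows \((\Lc(X^{\mub,\alpha}_t,U))_t\) generated by optimal relaxed feedbacks \(\Lambda^\alpha\). Since the equation has uniformly elliptic diffusion and a measurable drift depending on \((t,x,u)\), it is well posed in law for each fixed \(\mub\). I would work in a relaxed open--loop/martingale-problem formulation on the canonical space with relaxed controls in \(\M(A)\), where the set of admissible laws is convex and compact. The payoff is continuous jointly in \((\mub,\text{law})\) by Step 1 and the continuity of \(b\), \(L\) and \(g\). By Berge's maximum theorem, the set of optimizers is then nonempty, compact, and upper hemicontinuous in \(\mub\). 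This set is convex because the payoff is linear in the law of the relaxed control. Markovian projection (mimicking the relaxed control by \(\Lambda(t,x,u)=\Lc(\text{control}\mid X_t=x,U=u)\), which preserves the marginals \(\Lc(X_t,U)\) and the payoff) shows that feedback and open--loop optima coincide, so \(\Phi(\mub)\) is represented by Markovian kernels in \(\M\). Kakutani--Fan--Glicksberg then gives a fixed point \(\mub=\Phi(\mub)\), which is a deterministic relaxed MFG solution.

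\textbf{Step 3: strict solutions.} Under \Cref{assum:conv_cond}, I apply a Filippov measurable selection to the fixed point: there is a Borel \(\widehat\alpha(t,x,u)\) with \(b(\cdot,\widehat\alpha)=\int b\,\mathrm{d}\Lambda^\alpha\) and \(L(\cdot,\widehat\alpha)\ge\int L\,\mathrm{d}\Lambda^\alpha\). Since \(\mub\) is fixed, the state law is unchanged and the payoff does not decrease, so \(\delta_{\widehat\alpha}\) is still optimal and consistency is preserved. Under \Cref{assum:hamiltonian_cond}, I would use the BSDE/Feynman--Kac representation: for a fixed deterministic \(\mub\), the optimal relaxed control must be concentrated, \(\mathrm{d}t\otimes\mathrm{d}\P\)--a.e., on \(A(t,X_t,\overline p_{\mub_t}(X_t,U),\Rr_{\mub_t}(U),Z_t)\), where \(Z\) is the martingale-representation integrand of the value process. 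Because this set is a singleton, the relaxed optimum is already strict, with \(\Lambda^\alpha=\delta_{\widehat\alpha(\cdots,Z_t)}\), and \(Z_t=z(t,X_t,U)\) is Markovian.
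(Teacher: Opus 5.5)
Your proposal is correct in outline, and your Step~3 is essentially the paper's argument: Filippov selection at the fixed point under \Cref{assum:conv_cond}, and under \Cref{assum:hamiltonian_cond} the BSDE/Girsanov comparison that forces the optimal kernel onto the unique maximizer a.e. For the relaxed existence, however, you take a genuinely different route. The paper never runs a fixed point directly for a general graphon. It applies Berge and Kakutani--Fan--Glicksberg on the space $\Sc_c$ of densities satisfying the uniform H\"older bounds of \Cref{lemma:estimations}, working with Markovian kernels and the densities they generate, with continuity coming from \Cref{lemma:continuity}. It does this first for $\Gr\equiv 1$, then for step graphons with $k$ classes. It then reaches a measurable $\Gr$ through the stability result \Cref{prop:cong_graphon}, transferring optimality to the limit by approximating competitors by kernels Lipschitz in $(x,u)$ and then piecewise constant in $u$. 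You instead run one fixed point for the general graphon, absorbing the merely measurable label dependence into the averaging $\int_0^1\Gr(u,v)p(t,x,v)\,\mathrm{d}v$. You optimize in a relaxed open--loop martingale-problem formulation and return to feedback kernels by Markovian projection. This makes convexity of the best-response set transparent, since the martingale problem and the payoff are linear in the law. The paper simply asserts convexity of $\Rc^\star(p)$; in its Markovian formulation this really rests on the mixing-of-kernels argument $\Lambda=(\lambda v^1\Lambda^1+(1-\lambda)v^2\Lambda^2)/(\lambda v^1+(1-\lambda)v^2)$. Your route also avoids the step-graphon layer and the approximation of competitor controls. The price is that Berge's theorem needs lower hemicontinuity of $\mub\mapsto\{\text{admissible laws}\}$. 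In the paper this is immediate: freeze the kernel and use \Cref{lemma:continuity}. You must supply it, e.g.\ by Girsanov reweighting a given admissible law with the drift computed from $\mub^k$, using that $X_t$ has a density so that a.e.\ convergence of the drifts suffices. Finally, the step-graphon detour in your Step~1 is unnecessary. Uniform bounds and equicontinuity in $x$, together with weak convergence of $\mub^k_t$, give weak $L^2([0,1])$ convergence of $p^k(t,x,\cdot)$ for each $(t,x)$. Testing against $\Gr(u,\cdot)$ then gives $\overline p_{\mub^k_t}(x,u)\to\overline p_{\mub_t}(x,u)$ for every $u$. Likewise $\Rr_{\mub^k_t}(u)\to\Rr_{\mub_t}(u)$, because measures with a fixed uniform label marginal converge against test functions that are only measurable in the label.
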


\medskip

The next result is one of the key structural statements of the paper. It provides a probabilistic characterization of deterministic strict equilibria in terms of a coupled representation. Rather than relying on a classical PDE approach, the equilibrium is described through a nonlinear Feynman--Kac type formula, where the equilibrium feedback is recovered via a fixed point argument on a gradient field. This representation will play a central role in the analysis, in particular as a verification principle and as a bridge between the control formulation and the equilibrium structure. Here, to simplify the presentation, we take $\sigma (t,x)={\rm I}_d$ for all $(t,x)$.

\begin{proposition} \label{prop:representation_mfg_stric}
Let \(\sigma_\circ=0\), and assume that {\rm \Cref{assum:main_cond}} and {\rm \Cref{assum:hamiltonian_cond}} hold. Let \(\overline m\) be a deterministic strict {\rm MFG} solution associated with a strict control \(\alpha\). Then there exists a Borel map
\[
v=(v^1,\dots,v^d):[0,T]\times\R^d\times[0,1]\longrightarrow\R^d
\]
such that, for a.e. \((t,x,u)\in[0,T)\times\R^d\times[0,1]\) and each \(k=1,\dots,d\),
\begin{align}
    v^k(t,x,u)
    &=
    \E\Bigg[
    g\bigl(x+W_{T-t},\Rr_{\overline m_T}(u)\bigr)\frac{W^k_{T-t}}{T-t}
    \notag
    \\
    &\qquad\qquad
    +
    \int_t^T
    H\Bigl(
    s,
    x+W_{s-t},
    \overline p_{\overline m_s}(x+W_{s-t},u),
    \Rr_{\overline m_s}(u),
    v(s,x+W_{s-t},u)
    \Bigr)
    \frac{W^k_{s-t}}{s-t}\,\mathrm{d}s
    \Bigg],
    \label{eq:gradient_cond}
\end{align}
and \(\overline m_t=\Lc(S_t,U)\), where \(\Lc(S_0,U)=\Lc(\xi,U)\) and
\begin{align}
    \mathrm{d}S_t
    &=
    b\Bigl(
    t,
    S_t,
    \overline p_{\overline m_t}(S_t,U),
    \Rr_{\overline m_t}(U),
    \widehat\alpha\bigl(
    t,
    S_t,
    \overline p_{\overline m_t}(S_t,U),
    \Rr_{\overline m_t}(U),
    v(t,S_t,U)
    \bigr)
    \Bigr)\,\mathrm{d}t
    +\sigma(t,S_t)\,\mathrm{d}W_t.
    \label{eq:FP_cond}
\end{align}
\end{proposition}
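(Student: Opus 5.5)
Once the deterministic equilibrium flow $\overline m$ is frozen, the problem is a standard stochastic control problem for each label $u$. The map $v$ will be the gradient of the associated value function, written through a Bismut--Elworthy--Li type formula. Since $\overline m$ is deterministic and $\sigma={\rm I}_d$, define for each $u$ the frozen coefficients
\[
\tilde b(t,x,a):=b\bigl(t,x,\overline p_{\overline m_t}(x,u),\Rr_{\overline m_t}(u),a\bigr),
\qquad
\tilde L(t,x,a):=L\bigl(t,x,\overline p_{\overline m_t}(x,u),\Rr_{\overline m_t}(u),a\bigr).
\]
These are bounded and measurable in $(t,x)$. The control problem for label $u$ is to maximize the payoff with terminal reward $g(\cdot,\Rr_{\overline m_T}(u))$. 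Because $U$ is independent of $(\xi,W)$ and the feedback may depend on $u$, optimality of $\alpha$ in $J_{\overline m}$ over all of $\M$ disintegrates to optimality for a.e.\ $u$. I will check this disintegration by measurable selection: if it failed on a set of positive measure of labels, one could improve $\alpha$ on that set.

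Next, use Girsanov to pass to the driftless reference $x+W$. For each $u$, I will characterize the value function $Y(t,x,u)$ of this control problem through the BSDE
\[
Y_s=g(X_T,\cdot)+\int_s^T H\bigl(r,X_r,\overline p,\Rr,Z_r\bigr)\,\mathrm{d}r-\int_s^T Z_r\,\mathrm{d}W_r,
\qquad X_r=x+W_{r-t}.
\]
Since $b$ and $L$ are bounded, $H$ is Lipschitz in $z$ and bounded at $z=0$. The BSDE is therefore well posed, and its Markovian solution $Y_s=y(s,X_s,u)$, $Z_s=v(s,X_s,u)$ exists with Borel $y$ and $v$. This follows from the Markov property of $W$ together with measurable dependence on $u$ (for instance, construct the solution via Picard iteration, so measurability in $(t,x,u)$ is inherited). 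The comparison theorem shows that $y$ is the optimal value. Any control attaining the optimum must therefore satisfy $h(\cdot,Z,\alpha)=H(\cdot,Z)$, $\mathrm{d}t\otimes\mathrm{d}\P$-a.e. Under \Cref{assum:hamiltonian_cond}, the maximizer is unique, which forces $\alpha(t,x,u)=\widehat\alpha(t,x,\overline p_{\overline m_t}(x,u),\Rr_{\overline m_t}(u),v(t,x,u))$ for a.e.\ $(t,x,u)$. Here "a.e." is with respect to Lebesgue measure, because $x+W$ and the controlled state both have positive densities by non-degeneracy. Plugging this into the consistency condition with $\sigma_\circ=0$ and deterministic $\overline m$ yields $\overline m_t=\Lc(S_t,U)$ with $S$ solving \eqref{eq:FP_cond}. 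Weak uniqueness holds since the drift is bounded and the noise is nondegenerate.

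It remains to derive formula \eqref{eq:gradient_cond} for $v$. By the Markov property, $y(t,x,u)=\E[g(x+W_{T-t})+\int_t^T H(s,x+W_{s-t},\dots,v(s,x+W_{s-t},u))\,\mathrm{d}s]$. Differentiating the Gaussian heat kernel in $x$ gives the weight $W^k_{s-t}/(s-t)$, so the expectation on the right-hand side of \eqref{eq:gradient_cond} is $\partial_{x_k}y(t,x,u)$. The integral converges because $H(\cdot,v)$ is bounded by $C(1+|v|)$ and $\E|W_{s-t}|/(s-t)\sim (s-t)^{-1/2}$ is integrable. The identity $Z_s=\nabla_x y(s,X_s)$ (that is, $v=\nabla y$) is the main obstacle, since $y$ is not assumed smooth. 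My plan is to mollify $g$ and $H$ so that the regularized semilinear heat equation has classical solutions, for which $Z=\nabla y^\varepsilon$ follows from Itô's formula and the representation holds. I would then pass to the limit using BSDE stability ($Z^\varepsilon\to Z$ in $L^2$). On the representation side, the Gaussian weight makes the right-hand side of \eqref{eq:gradient_cond} continuous with respect to $L^2(\mathrm{d}s\otimes\mathrm{d}\P)$ convergence of the $v$-argument, using Cauchy--Schwarz with $\E[|W_{s-t}|^2]/(s-t)^2=d/(s-t)$. For integrability I would need an a priori bound $|v|\le C(s-t)^{-1/2}$-type, obtained from Gronwall on the formula itself. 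The conclusion holds for a.e.\ $(t,x,u)$ since $L^2$ limits identify $v$ only almost everywhere.
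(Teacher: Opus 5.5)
Your proposal is correct and follows essentially the paper's route: a BSDE with Hamiltonian driver, Girsanov plus comparison to show the equilibrium control coincides a.e.\ with $\widehat\alpha$ evaluated at $Z=v$ (unique maximizer plus positivity of the densities), consistency to obtain \eqref{eq:FP_cond}, and a Ma--Zhang/Bismut--Elworthy--Li representation of $Z$ to obtain \eqref{eq:gradient_cond}. The paper simply cites Ma--Zhang's theorem for this last step, whereas you re-derive it by mollification; in that re-derivation, $L^2(\mathrm{d}s\otimes\mathrm{d}\P)$ convergence alone does not control the weight $W_{s-t}/(s-t)$ at a fixed $(t,x)$ (because $(s-t)^{-1/2}\notin L^2(\mathrm{d}s)$), so you need the uniform bound $|v^\varepsilon(s,\cdot)|\le C(T-s)^{-1/2}$ (in $T-s$, not $s-t$) together with dominated convergence, or an extra averaging in $(t,x)$.
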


\begin{remark}
This result shows that any deterministic strict equilibrium can be entirely characterized by a pair \((v,\overline m)\) solving a coupled system, where the map \(v\) plays the role of a decoupling field encoding the optimal feedback. In particular, the equilibrium control is recovered pointwise through the maximizer \(\widehat{\alpha}\), evaluated at \(v\).

Two important features distinguish this representation from the classical literature. First, it bypasses the usual Hamilton--Jacobi--Bellman $($HJB$)$ PDE formulation: instead of solving a nonlinear PDE on \([0,T]\times\R^d\times[0,1]\), the problem reduces to finding a fixed point of the nonlinear integral equation \eqref{eq:gradient_cond}. This probabilistic formulation is more direct and is particularly well--suited to the present setting, where the state space involves both spatial variables and heterogeneous labels.

Second, the assumptions required to obtain this representation are comparatively mild. In particular, we do not rely on strong regularity of the coefficients, as typically needed in PDE--based approaches. The structure of the Hamiltonian and the non-degeneracy of the noise are sufficient to derive the representation, making the result robust with respect to extensions such as graphon interactions or weak regularity in the coefficients.

In combination with {\rm \Cref{cor:representation_mfg_stric}}, this characterization provides a powerful tool: it allows one to both construct equilibria and analyze their properties without resorting to PDE techniques, while remaining fully consistent with the underlying control interpretation.
\end{remark}
\medskip

 The next corollary shows that, conversely, such a pair indeed generates an equilibrium.
For any Borel map
\[
v=(v^1,\dots,v^d):[0,T]\times\R^d\times[0,1]\longrightarrow\R^d
\]
and any flow \(\overline m\), define
\[
\overline\alpha^{v,\overline m}(t,x,u)
:=
\widehat\alpha\Bigl(
t,
x,
\overline p_{\overline m_t}(x,u),
\Rr_{\overline m_t}(u),
v(t,x,u)
\Bigr).
\]

\begin{corollary} \label{cor:representation_mfg_stric}
Let \(\sigma_\circ=0\). Assume there exist a Borel map
\[
v=(v^1,\dots,v^d):[0,T]\times\R^d\times[0,1]\longrightarrow\R^d
\]
and a flow \((\overline m_t)_{t\in[0,T]}\) satisfying {\rm \eqref{eq:gradient_cond}} and {\rm \eqref{eq:FP_cond}}. Then \(\overline m\) is a strict {\rm MFG} solution associated with \(\overline\alpha^{v,\overline m}\).
\end{corollary}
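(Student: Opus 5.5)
The plan is a verification argument: fix the deterministic flow $\overline m$ and treat the representative agent's problem as a standard stochastic control problem. Consistency is then immediate, and the real work is showing that $v$ is the gradient of the value function. Throughout, $\sigma={\rm I}_d$ and $\sigma_\circ=0$, so $\Gc_t$ is trivial.

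\textbf{Consistency.} The law of $(X^{\overline m,\overline\alpha^{v,\overline m}},U)$ is exactly the law of $(S,U)$ in \eqref{eq:FP_cond}, because the drift is bounded and $\sigma={\rm I}_d$. By Girsanov, this SDE has a unique weak solution. Hence $\Lc(X_t,U)=\overline m_t$, which is the consistency condition with trivial $\Gc_t$.

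\textbf{Optimality: defining the candidate value function.} With $\overline m$ frozen, I define for each label $u$ and a.e. $(t,x)$
\[
Y(t,x,u):=\E\Big[g\bigl(x+W_{T-t},\Rr_{\overline m_T}(u)\bigr)+\int_t^T H\bigl(s,x+W_{s-t},\overline p_{\overline m_s}(x+W_{s-t},u),\Rr_{\overline m_s}(u),v(s,x+W_{s-t},u)\bigr)\,\mathrm{d}s\Big].
\]
By the Bismut--Elworthy--Li formula for the heat semigroup (Gaussian integration by parts), $\nabla_x\E[f(x+W_r)]=\E[f(x+W_r)W_r/r]$ for bounded $f$. Applying this term by term, \eqref{eq:gradient_cond} says precisely that $v=\nabla_x Y$. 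This gives $Y(\cdot,\cdot,u)\in W^{1,2}_{p,loc}$-type regularity, as a mild solution of
\[
\partial_t Y+\tfrac12\Delta Y+H(t,x,\overline p,\Rr,\nabla Y)=0,\qquad Y(T)=g.
\]

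\textbf{Optimality: the verification step.} I would avoid Itô on $Y$ and instead use the BSDE representation. Under the reference measure where $X^0=x+W$ is driftless, $Y_s:=Y(s,X^0_s,u)$ and $Z_s:=v(s,X^0_s,u)$ solve
\[
Y_s=g+\int_s^T H(\dots,Z_r)\,\mathrm{d}r-\int_s^T Z_r\,\mathrm{d}W_r .
\]
This follows from the Markov property, together with the martingale representation of $\E[\,\cdot\mid\Fc_s]$ in which the integrand is identified with $\nabla Y$ via Clark--Ocone/Malliavin or the same Gaussian integration by parts. For any relaxed feedback $\Lambda^\beta$, Girsanov with density built from $b$ (bounded) turns $X^0$ into the controlled state. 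Writing $H\ge\int h(\cdot,a)\Lambda^\beta(\mathrm{d}a)$ and using the comparison principle for Lipschitz BSDEs — the driver $H$ is Lipschitz in $z$ since $b$ is bounded — gives $Y(0,x,u)\ge$ the conditional payoff of $\Lambda^\beta$. Equality holds for $\overline\alpha^{v,\overline m}$, because there $h=H$ by the definition of $\widehat\alpha$. Finally, integrate in $(\xi,U)$. Measurability in $u$ of all objects is routine.

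\textbf{Main obstacle.} The delicate step is justifying the BSDE identification $Z=v(\cdot,X^0,u)$ when $v$ is only Borel, satisfies \eqref{eq:gradient_cond} merely a.e., and $H$ is only measurable in $x$. I would first try to approximate $H(\dots,v)$ by smooth functions in $L^p$ of the Gaussian kernel, pass to the limit using Krylov-type estimates, and exploit the fact that the laws of $X^0_s$ have densities, so that a.e. statements suffice. A second concern is whether $v$ is locally integrable enough for the integral in \eqref{eq:gradient_cond} to make sense. Since $g$ and $L$ are bounded and $H$ has at most linear growth in $z$, a Gronwall argument on $\sup_x|v|$ with the integrable singular weight $(s-t)^{-1/2}$ should yield boundedness of $v$.
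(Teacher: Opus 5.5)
Your proposal is correct and follows essentially the same route as the paper. The paper likewise defines the candidate value $\widehat Y_t=\widehat{\Uc}(t,X_0+W_t,U)$ by the Feynman--Kac conditional expectation and identifies $\partial_{x_i}\widehat{\Uc}=v^i$ by Gaussian integration by parts, so that $(\widehat Y,v(\cdot,X_0+W_\cdot,U))$ solves the Bellman BSDE \eqref{eq:BSDE_bellman}; it then uses the Girsanov/comparison characterization $\sup_\beta J_{\overline m}(\Lambda^\beta)=\E[Y_0]$, with equality for the control attaining $H$, and reads consistency off \eqref{eq:FP_cond}. One small correction to your side remark on boundedness: since $g$ is only bounded and continuous, $v$ need not be bounded near $T$ (one only gets $\sup_x|v(t,x,u)|\le C(T-t)^{-1/2}$), but this integrable singularity is enough for every step you describe.
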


\subsection{The \(n\)--player formulation} \label{sec_n_player}

\medskip

We now turn to the finite--player game and state the asymptotic results connecting the microscopic model to the limiting mean field game introduced above. The purpose of this second part is twofold. First, we show that limits of approximate Nash equilibria of the \(n\)--player system are governed by relaxed mean field game equilibria. Second, we prove the converse statement: every relaxed mean field game equilibrium can be approximated by approximate Nash equilibria of the finite--player game. Together, these two results show that the relaxed MFG formulation is not merely a limit object, but a complete asymptotic description of equilibrium behavior in large non--exchangeable systems with moderate interactions.

\medskip

Let $(W^i)_{i \ge 1}$ be a sequence of independent $\H$--Brownian motions and $(X_0^i)_{i \ge 1}$ be a sequence $\Hc_0$--random variables with $\Lc(X_0^i)=\nu$. In addition, $(W^i,X^i_0)_{i \ge 1} \perp W^\circ$. For $n \ge 1$, we consider \((\xi^n_{ij})_{1\le i,j\le n}\subset \Er\) a deterministic interaction matrix. We denote by \(\Ac_n\) the set of progressively Borel measurable maps
\[
\beta^n:[0,T]\times(\Cc^d)^n\longrightarrow A.
\]
We say a control $\beta \in \Ac_n$ is Markovian if $\beta(t,x^1,\cdots,x^n)=\widetilde \beta(t,x^1(t),\cdots,x^n(t))$ for any $(x^1,\cdots,x^n) \in (\Cc^d)^n$ for some Borel map $\widetilde{\beta}:[0,T] \x (\R^d)^n \to A$.
For a profile \(\alphab=(\alpha^1,\dots,\alpha^n)\in\Ac_n^n\), we denote by
\[
\Xbb^{\alphab}=(X^{1,\alphab},\dots,X^{n,\alphab})
\]
the state process solving, for each \(i=1,\dots,n\), $X^{i,n}_0=X^i_0$,
\begin{align*}
    \mathrm{d}X^{i,n}_t
    &=
    b\Bigl(
    t,
    X^{i,n}_t,
    \widehat V_{i,t}^n(X^{i,n}_t),
    \Rr^n_{i,t},
    \alpha^i(t,\Xbb^{\alphab})
    \Bigr)\,\mathrm{d}t
    +\sigma(t,X^{i,n}_t)\,\mathrm{d}W^i_t
    +\sigma_\circ\,\mathrm{d}W^\circ_t,
\end{align*}
where
\[
\Rr^n_{i,t}
=
\frac1n\sum_{j=1}^n \delta_{(\xi^n_{ij},X^{j,n}_t)},
\]
and
\[
\widehat V_{i,t}^n(x)
:=
\frac1n\sum_{j=1}^n \xi^n_{ij}V_n(x-X^{j,n}_t),
\qquad
V_n(x)=\frac1{\varepsilon_n^d}V\Bigl(\frac{x}{\varepsilon_n}\Bigr),
\qquad x\in\R^d.
\]
Here \(V\) is a {\color{black}bounded} probability density, and \((\varepsilon_n)_{n\ge1}\) is a positive sequence. As in the limiting model, the term \(\widehat V_{i,t}^n\) represents the moderate local interaction field felt by player \(i\), while \(\Rr^n_{i,t}\) describes the heterogeneous graphon--type environment.
The reward of player \(i\) is given by
\begin{align*}
    J^n_i(\alphab)
    :=
    \E\Bigg[
    g\bigl(X^{i,n}_T,\Rr^n_{i,T}\bigr)
    +
    \int_0^T
    L\Bigl(
    t,
    X^{i,n}_t,
    \widehat V_{i,t}^n(X^{i,n}_t),
    \Rr^n_{i,t},
    \alpha^i(t,\Xbb^{\alphab})
    \Bigr)\,\mathrm{d}t
    \Bigg].
\end{align*}

\begin{definition}
Let \(\boldsymbol{\delta}=(\delta_1,\cdots,\delta_n)\in \R_+^n\). A profile \(\alphab=(\alpha^1,\dots,\alpha^n)\in\Ac_n^n\) is called a \(\boldsymbol{\delta}\)--Nash equilibrium if
\begin{align*}
\sup_{\beta\in\Ac_n}
J^n_i(\alpha^1,\dots,\alpha^{i-1},\beta,\alpha^{i+1},\dots,\alpha^n)
-\delta_i
\le
J^n_i(\alphab),
\qquad
1\le i\le n.
\end{align*}
\end{definition}
We say that $\alphab$ is a Markovian $\delta$--Nash equilibrium if for each $i \le n$, $\alpha^{i}$ is a Markovian control.

\medskip

Let \((\boldsymbol{\delta}^n)_{n\ge1}\) be a sequence of nonnegative numbers and let \((\alphab^n)_{n\ge1}\) be a sequence such that, for each \(n\ge1\), \(\alphab^n\) is a \(\boldsymbol{\delta}^n\)--Nash equilibrium. To compare the finite--player system with the limiting graphon model, we associate with the matrix \((\xi^n_{ij})\) the step graphon
\[
\Gr^n(u,v)
:=
\xi^n_{ij}\,
\mathbf 1_{u\in((i-1)/n,i/n]}\,
\mathbf 1_{v\in((j-1)/n,j/n]},
\]
and we introduce the empirical flow
\[
\mub^n
=
\Bigl(
\mub^n_t
=
\frac1n\sum_{j=1}^n \delta_{(X^{j,n}_t,u^n_j)}
\Bigr)_{t\in[0,T]},
\qquad
u^n_j:=\frac{j}{n}.
\]
We say that \(\Gr^n\to \Gr\) if
\[
\lim_{n\to\infty}\|f\circ \Gr^n-f\circ \Gr\|_{\Box}=0,
\qquad \mbox{for any bounded Lipschitz map } f,
\]
where the cut norm of a kernel \(T:[0,1]^2\to\R\) is defined by
\[
\|T\|_{\Box}
:=
\sup_{A,B\subset[0,1]}
\left|
\int_{A\times B}T(x,y)\,\mathrm{d}x\,\mathrm{d}y
\right|.
\]

\medskip

We now turn to the connection between the finite--player system and the mean field formulation. The next result establishes that the mean field game introduced above indeed arises as the asymptotic limit of approximate Nash equilibria of the $n$--player game. In other words, it provides a rigorous justification of the MFG model as a limit object, showing that the probabilistic structure we have introduced is not only well--posed, but also consistent with the underlying strategic interactions at the finite level.


\begin{theorem} \label{thm:from_n_to_MFG}
Assume {\rm \Cref{assum:main_cond}} and $\lim_{n \to \infty} \Lc(\mub^n_0) = \delta_{\Lc(\xi,\,U)}$ for the weak topology. Then the sequence \((\Lc(\mub^n))_{n\ge1}\) is relatively compact for the weak topology. Let \(\Lc(\mub)\) be the limit of a convergent subsequence \((\Lc(\mub^{n_k}))_{k\ge1}\). If
\[
\frac{1}{n}\sum_{i=1}^n\delta^n_i\to0,
\qquad
\Gr^n\to \Gr,
\qquad
\varepsilon_n\to0,
\qquad
\frac{1}{n\varepsilon_n^d}\to0,
\]
then \(\mub\) is a relaxed {\rm MFG} solution associated with some \(\Lambda^\alpha\in\M\). Moreover,
\[
\lim_{k\to\infty}\frac{1}{n_k}\sum_{i=1}^{n_k}J^{n_k}_i(\alphab^{n_k})
=
J_{\mub}(\Lambda^\alpha).
\]
\end{theorem}

\begin{remark}
This theorem provides the rigorous derivation of the limiting mean field game from the underlying finite--player system. It shows that, despite the simultaneous presence of graphon heterogeneity, moderate local interactions, and common noise, approximate Nash equilibria remain asymptotically governed by a relaxed mean field game equilibrium. The convergence of the averaged equilibrium payoffs further shows that the mean field limit preserves the asymptotic value of the game, and not merely the limiting distributional behavior of the population.

To the best of our knowledge, this is the first convergence result of this type in such a general framework. Existing works on moderate interactions typically require stronger regularity assumptions, slower rate for $(\varepsilon_n)_{n \ge 1}$ and do not incorporate common noise, while the literature on graphon--based heterogeneous models has so far focused mainly on existence and qualitative properties, without establishing convergence of Nash equilibria. The present theorem therefore provides a unified and robust asymptotic justification for mean field games with heterogeneous and local interactions under common noise.
\end{remark}

\medskip

We next state the converse result, which shows that the limiting MFG equilibrium is asymptotically complete.

\begin{theorem}\label{thm:from_MFG_to_n}
Assume {\rm \Cref{assum:main_cond}}, $\lim_{n \to \infty} \Lc(\mub^n_0) = \delta_{\Lc(\xi,\,U)}$ for the weak topology and
\[
\Gr^n\to \Gr,
\qquad
\varepsilon_n\to0,
\qquad
\frac{1}{n\varepsilon_n^d}\to0.
\]
Let \(\mub\) be a strict {\rm MFG} solution associated with \(\Lambda^\alpha\). Then there exist sequences \((\alphab^n)_{n\ge1}\) and \((\boldsymbol{\delta}^n)_{n\ge1}\) such that, for each \(n\ge1\), \(\alphab^n\) is a Markovian \(\boldsymbol{\delta}^n\)--Nash equilibrium, \(\frac{1}{n}\sum_{i=1}^n\delta^n_i\to0\), and
\[
\lim_{n\to\infty}\frac1n\sum_{i=1}^n J^n_i(\alphab^n)
=
J_{\mub}(\Lambda^\alpha).
\]
\end{theorem}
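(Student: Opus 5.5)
We construct the approximate equilibria by letting every player use the mean field feedback, with the label $u$ replaced by $u^n_i=i/n$ and the flow $\mub_t$ replaced by a proxy computable from the finite system. The key point is that a strict solution gives $\Lambda^\alpha=\delta_{\widehat\alpha(t,x,u,\mub_t)}$, but the flow $\mub$ is random under common noise and need not be adapted to the players' information. So we first use the measurable selection arguments behind \Cref{thm:existence} and the weak formulation of consistency to reduce to a version of the equilibrium in which $\mub_t$ is a measurable functional of $(W^\circ_s)_{s\le t}$, possibly with an auxiliary independent uniform variable. This is the main obstacle. The players must observe the common signal through Markovian feedbacks of states, and the natural proxy is the empirical measure $\mub^n_t$ of the finite system.

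Our first attempt is the profile
\[
\alpha^{i,n}(t,\mathbf x)=\widehat\alpha\bigl(t,x^i(t),u^n_i,\phi_t(\mub^n_t)\bigr),
\]
with $\widehat\alpha$ and the flow dependence regularized: approximate $\widehat\alpha$ in $L^1$ of the relevant (density-dominated) law by functions continuous in $(x,\overline m)$ via Lusin's theorem, which is possible because all laws involved are absolutely continuous by non-degeneracy and Krylov's estimate. Since $\widehat\alpha$ may depend on $\mub_t$ only through its law, the remaining identification of the limit of $\mub^n$ with $\mub$ is done by a propagation of chaos argument with fixed feedback.

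Next we prove propagation of chaos for this fixed Markovian profile. We show that $\Lc(\mub^n)$ converges to the law of $\mub$, using tightness as in \Cref{thm:from_n_to_MFG}. We then identify the limit through martingale problems, with three ingredients. First, $\widehat V^n_{i,t}(X^{i,n}_t)\to\overline p_{\mub_t}(X_t,U)$, using $n\varepsilon_n^d\to\infty$ to kill the self-interaction and variance terms, together with an $L^2$ density estimate from non-degeneracy. Second, $\Rr^n_{i,t}\to\Rr_{\mub_t}(u)$ in the averaged sense, which follows from the cut-norm convergence $\Gr^n\to\Gr$ tested against bounded Lipschitz $f$. Third, uniqueness of the limiting McKean--Vlasov equation, which uses the Lipschitz continuity of $b$ in $(p,r)$ and Girsanov. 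These give $\frac1n\sum_i J^n_i(\alphab^n)\to J_{\mub}(\Lambda^\alpha)$.

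Finally, for the Nash property, let player $i$ deviate to an arbitrary $\beta\in\Ac_n$. A single deviation perturbs $\mub^n$ and $\widehat V^n$ only by $O(1/n)$ and $O(1/(n\varepsilon_n^d))$ respectively; the second bound is obtained after averaging over $i$ and using bounded $V$ and Girsanov with bounded drifts. Define $\delta^n_i$ as the optimal gain from deviation. To control $\frac1n\sum_i\delta^n_i$, we argue by contradiction: take near-optimal deviations $\beta^{i,n}$ and consider the randomly chosen deviating player $i$ with label $u^n_i$. The pair (state, deviation control) is tight as relaxed controls, and its limit is a controlled process in the limiting problem against the flow $\mub$, with the other players unchanged. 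Any such limit is dominated by $\sup_\beta J_{\mub}(\Lambda^\beta)=J_{\mub}(\Lambda^\alpha)$. Here we also need that feedbacks depending on the path of $\mub^n$ are approximable by elements of $\M$, which follows by the standard density of Markovian relaxed controls in the non-degenerate case (mimicking). Hence $\limsup_n\frac1n\sum_i\delta^n_i\le0$, which completes the proof.
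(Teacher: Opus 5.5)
There is a genuine gap, in your identification step $\Lc(\mub^n)\to\Lc(\mub)$. Once you feed the proxy $\phi_t(\mub^n_t)$ into the feedback, the flow becomes endogenous. Any limit point of $\mub^n$ is then a weak solution of the conditional McKean--Vlasov equation with drift $b\bigl(t,x,\overline p_{\overline m_t}(x,u),\Rr_{\overline m_t}(u),\widehat\alpha(t,x,u,\overline m_t)\bigr)$, and you need $\mub$ to be its only solution.

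The uniqueness you invoke uses only that $b$ is Lipschitz in $(p,r)$. Here, however, $\overline m$ also enters through the control, where $\widehat\alpha$ is merely Borel in $\overline m$ and $b$ is merely continuous in $a$. \Cref{prop:uniqueness_FP} covers neither this situation nor common noise, and such equations can have several solutions in law, of which $\mub$ is only one. Without identification, neither $\frac1n\sum_i J^n_i(\alphab^n)\to J_{\mub}(\Lambda^\alpha)$ nor your Nash step follows, since $\Lambda^\alpha$ is optimal against $\mub$ and not against some other limit flow.

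Your Lusin regularization does not repair this. In the $\overline m$--variable there is no reference measure playing the role of Lebesgue measure, so Krylov-type estimates do not apply there. An approximation that is small in $L^1(\Lc(\mub_t))$ therefore controls nothing under the unknown limit law of $\mub^n_t$, unless you already know that law is $\Lc(\mub_t)$, which is circular.

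Your preliminary reduction is also unsupported. The measurable selection in the proof of \Cref{thm:existence} produces one particular solution, whose flow depends on the whole path of $W^\circ$ and need not be adapted to the common-noise filtration. It says nothing about an arbitrary given $\mub$, which is what the theorem concerns. Moreover, your profile provides no mechanism through which the players could share a common value of the auxiliary variable.

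The missing idea is to keep $\mub$ exogenous and to Markovianize only at fixed $n$. The paper takes $(\mub,W^\circ)$ independent of $(X^i_0,W^i)_{i\ge1}$ and lets player $i$ use $\alpha^\ell(t,x^i,u^n_i,\mub_t)$, with Lipschitz $\alpha^\ell\to\alpha$. Conditionally on $\Gc_T$, and after translating by $\sigma_\circ W^\circ$, this is a frozen-flow system without common noise. Its only self-consistency is through $(\overline p,\Rr)$, where $b$ is Lipschitz, so \Cref{prop:uniqueness_FP} and \Cref{corollary:weak_poc} give $\Lc(\mub^{\ell,n}_t\mid\Gc_T)\to\delta_{\mub_t}$ together with convergence of the costs.

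The dependence on $\mub$ is then removed for fixed $(\ell,n)$ by mimicking. One approximates $\Lc(\mub_t\mid\Xbb^n_t=\xbb)$, against a density on $(\R^d)^n$, by Dirac kernels $\delta_{\gamma_q(t,\xbb)}$ with $\gamma_q$ Lipschitz. \cite[Proposition A.5]{closed-loop-MFG_MDF} then gives convergence of the time marginals of $\Xbb^n_t$ as $q\to\infty$. This is where density domination is genuinely available, since one is dealing with a finite-dimensional non-degenerate diffusion. Time marginals are all that the payoffs and \Cref{weak_deviating} require. Given that, your deviation argument is essentially the paper's.
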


\begin{remark}
\begin{enumerate}
    \item[\textnormal{(i)}]
    Together with {\rm\Cref{thm:from_n_to_MFG}}, this theorem shows that the relaxed MFG formulation gives a complete asymptotic description of the finite--player equilibria. Indeed, {\rm\Cref{thm:from_n_to_MFG}} states that every limit point of approximate Nash equilibria is a relaxed MFG equilibrium, while the present result proves the converse: every strict MFG equilibrium $($which can be obtained by a relaxed MFG$)$ can be approximated by a sequence of finite--player Nash equilibria. Thus the limiting MFG does not merely describe possible accumulation points; it exactly characterizes the large--population equilibrium behavior, both at the level of equilibrium configurations and at the level of averaged payoffs.

    \item[\textnormal{(ii)}]
    A noteworthy feature of the result is that the approximating equilibria can be chosen to be Markovian. This is important because, in the general MFG literature, constructing closed--loop or Markovian approximate Nash equilibria from an arbitrary relaxed MFG solution is a delicate problem. Several existing approaches either enlarge the strategy space, for instance by allowing additional randomization or auxiliary variables $($see {\rm \cite{leflemclosed2023}}$)$, or construct approximate equilibria in path--dependent classes rather than genuinely Markovian ones $($see {\rm \cite{closed-loop-MFG_MDF}}$)$. By contrast, the present theorem provides a direct recovery result in the original finite--player game: starting from any relaxed MFG equilibrium, one constructs Markovian \(\boldsymbol{\delta}_n\)--Nash equilibria with \(\frac{1}{n} \sum_{i=1}^n\delta^n_i\to0\). In this sense, the theorem gives a full Markovian approximation result for non--exchangeable systems with moderate interactions.

    \item[\textnormal{(iii)}]
    This recovery result is particularly useful in the present setting because the limiting equilibrium is relaxed, while the finite--player equilibria are genuine Markovian feedback equilibria. Hence the relaxed formulation is used as a compactness and identification tool at the limit, but it does not lead to a loss of relevance for the original finite--player game: every relaxed equilibrium can still be realized asymptotically by ordinary Markovian strategies.
\end{enumerate}
\end{remark}

}


\section{Proofs of the main results} \label{sec:proofs}

\subsection{Existence of MFG solution}

This subsection proves the existence results. We proceed by gradually adding the main features of the model. We first treat the deterministic case without graphon interaction, where existence follows from a fixed point argument on compact sets of density flows. We then extend the construction to finite--support graphons and pass to a general graphon by approximation and stability.
The common noise case is handled by freezing the common--noise path, solving the corresponding deterministic translated problem, and then selecting solutions measurably in the path. Translating these pathwise solutions back along the common noise yields a random relaxed MFG solution. Under the convexity assumption, a Filippov--type selection gives a strict equilibrium.

\medskip

\subsubsection{The case without common noise and without graphon, i.e. $\Gr=1$ and $\mathbf U=1$}

In this section, we consider the simpler setting without graphon interaction. The coefficients are then of the type $(b,L)(t,x,p,\overline{m},a)=(\widetilde{b},\widetilde{L})(t,x,p,m(\Er\x \mathrm{d}y),a)$ for some Borel map $(\widetilde{b},\widetilde{L}):[0,T] \x \R^d \x \R \x \Pc(\R^d) \x A \to \R^d \x \R$. To ease the notation, we will keep $(b,L)$ instead of $(\widetilde{b},\widetilde{L})$. The mean field variable is therefore reduced to the density itself.

\medskip

Let
\[
\Sc\subset C((0,T]\times\R^d;\R_+)
\]
be the set of nonnegative continuous maps $u$ such that
\[
\int_{\R^d} u(t,x)\,\mathrm{d}x=1,\qquad t\in(0,T].
\]
We equip $\Sc$ with the locally uniform topology. Fix $p\in\Sc$. For any Borel map
\[
\Lambda^\beta:[0,T]\times\R^d\to\Pc(A),
\]
we denote by $X^{p,\beta}$ the weak solution of $\Lc(X^{p,\beta}_0)=\Lc(\xi)$ and, for $t\in(0,T]$,
\begin{align*}
    \mathrm{d}X^{p,\beta}_t
    =
    \int_A b\big(t,X^{p,\beta}_t,p(t,X^{p,\beta}_t),m_t,a\big)\,
    \Lambda^\beta(t,X^{p,\beta}_t)(\mathrm{d}a)\,\mathrm{d}t
    +\sigma(t,X^{p,\beta}_t)\,\mathrm{d}W_t,\qquad m_t(\mathrm{d}x)=p(t,x)\,\mathrm{d}x.
\end{align*}
Since the diffusion coefficient is non--degenerate, for each $t\in(0,T]$, the law of $X^{p,\beta}_t$ admits a density (see for instance \citeauthor*{krylov1980controlled} \cite[Section 2, Chapter 2, Theorem 4]{krylov1980controlled}). We denote it by
\[
\Lc(X^{p,\beta}_t)(\mathrm{d}x)=v^{p,\beta}(t,x)\,\mathrm{d}x.
\]
The next result is just \cite[Proposition A.1.]{closed-loop-MFG_MDF} (see also \cite[Theorem 4]{AronsonSerrin67}).
\begin{lemma}\label{lemma:estimations}
 For any interval $[s,t]\subset(0,T]$, and compact set $Q\subset\R^d$, there exist $\alpha\in(0,1)$ and $C>0$ such that, for every $p\in\Sc$, every Borel control kernel $\Lambda^\beta$,
\begin{align*}
    \sup_{(r,x)\in[s,t]\times Q}|v^{p,\beta}(r,x)|
    +
    \sup_{\substack{(r,x),(r',x')\in[s,t]\times Q\\(r,x)\neq(r',x')}}
    \frac{|v^{p,\beta}(r,x)-v^{p,\beta}(r',x')|}
    {|r-r'|^{\alpha/2}+|x-x'|^\alpha}
    \le C.
\end{align*}
\end{lemma}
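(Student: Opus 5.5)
The lemma is a uniform interior Hölder estimate for densities of non-degenerate diffusions with bounded drift. I would prove it by viewing $v^{p,\beta}$ as a weak solution of a linear Fokker--Planck equation whose coefficients obey bounds independent of $(p,\beta)$. Then I would invoke the De Giorgi--Nash--Moser/Aronson--Serrin theory, as in \cite[Theorem 4]{AronsonSerrin67}.

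\textbf{Step 1: freeze the feedback into a drift.} Fix $p$ and $\Lambda^\beta$ and set
\[
B(t,x):=\int_A b\big(t,x,p(t,x),m_t,a\big)\,\Lambda^\beta(t,x)(\mathrm{d}a).
\]
This is a Borel vector field with $\|B\|_\infty\le \|b\|_\infty$, a bound independent of $(p,\beta)$ because $b$ is bounded. Hence $X^{p,\beta}$ solves an SDE with bounded measurable drift $B$ and diffusion $\sigma$. The diffusion is Lipschitz in $x$, bounded on compacts, and uniformly elliptic with constant $\theta$ by \Cref{assum:main_cond}(ii).

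\textbf{Step 2: write the equation in divergence form.} By Itô's formula, $\mu_t=\Lc(X^{p,\beta}_t)$ solves, in the distributional sense,
\[
\partial_t \mu = \tfrac12\sum_{i,j}\partial_{ij}(a^{ij}\mu) - \nabla\cdot(B\mu),\qquad a=\sigma\sigma^\top .
\]
Since $a$ is Lipschitz in $x$, I rewrite this as
\[
\partial_t v=\tfrac12\nabla\cdot(a\nabla v)+\nabla\cdot\big((\tfrac12\mathrm{div}\,a - B)v\big).
\]
This is divergence form with a uniformly elliptic principal part and a bounded first-order coefficient, whose bound depends only on $\|b\|_\infty$, $\theta$ and the Lipschitz constant of $\sigma$.

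\textbf{Step 3: local $L^\infty$ bound.} The interior Hölder theory needs an a priori local integrability of $v$ on a neighbourhood of $[s,t]\times Q$, uniformly in $(p,\beta)$. Here I would use Aronson's Gaussian upper bound for the fundamental solution of such equations. This gives
\[
v(r,x)\le C\,r^{-d/2}\int e^{-c|x-y|^2/r}\,\nu(\mathrm{d}y),
\]
so $v$ is bounded by $C s^{-d/2}$ on $[s/2,T]\times\R^d$. The constants depend only on the structural bounds above. Alternatively, Krylov's estimate \cite{krylov1980controlled} gives $v\in L^q_{loc}$ uniformly, and Moser iteration upgrades this to $L^\infty$.

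\textbf{Step 4: Hölder continuity.} I apply the interior Hölder estimate of Aronson--Serrin on a slightly larger cylinder $[s/2,T]\times Q'$ with $Q\Subset Q'$. This yields $\alpha\in(0,1)$ and $C$, depending only on $\theta$, the coefficient bounds, the cylinder and the $L^\infty$ bound, such that the stated seminorm is at most $C$. Combined with Step 3, this gives the full estimate.

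\textbf{Main obstacle.} The delicate point is uniformity: the drift is only measurable, and the initial law $\nu$ may be singular, which is why $s>0$ is needed. The Gaussian bound of Step 3 is precisely what handles this, since it depends only on structural constants. The identification of the density with a weak solution to which the theory applies, via uniqueness of distributional solutions with bounded coefficients, is the technical check I expect to require the most care. It is exactly the content of \cite[Proposition A.1]{closed-loop-MFG_MDF}, which I would cite.
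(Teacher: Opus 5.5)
Your proposal is correct and follows essentially the same route as the paper, which proves the lemma simply by invoking \cite[Proposition A.1]{closed-loop-MFG_MDF} together with the Aronson--Serrin interior H\"older theory \cite[Theorem 4]{AronsonSerrin67}. In other words, both rely on parabolic De Giorgi--Nash--Moser estimates for the Fokker--Planck equation with a bounded measurable frozen drift, uniform in $(p,\beta)$, and your Gaussian upper bound in Step 3 just makes the uniform local $L^\infty$ bound explicit. One small correction: that Gaussian bound needs $\sigma$ to be globally bounded, as the paper tacitly assumes, not merely bounded on compacts, so your constants also depend on $\|\sigma\|_\infty$.
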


The constants \(\alpha\) and \(C\) in \Cref{lemma:estimations} are uniform with respect to the reference density \(p\in S\) and the control kernel \(\Lambda^\beta\). More precisely, they depend only on the interval \([s,t]\subset(0,T]\), the compact set \(Q\subset\R^d\), the uniform bounds on \((b,\sigma)\), and the spatial Lipschitz regularity of \(\sigma\). This uniformity will be used repeatedly below to obtain compactness of the family of densities generated by admissible controls.

\medskip
For the case without common noise, the set $\M$ denotes the set of Borel maps
\[
\Lambda:[0,T]\times\R^d\to\Pc(A).
\]
We equip $\M$ with the topology defined as follows: $\Lambda^n\to\Lambda$ if
\[
\Lambda^n(t,x)(\mathrm{d}a)\,\ell(x)\,\mathrm{d}x\,\mathrm{d}t
\;\Longrightarrow\;
\Lambda(t,x)(\mathrm{d}a)\,\ell(x)\,\mathrm{d}x\,\mathrm{d}t
\]
for the weak topology, where $\ell:\R^d\to\R_+$ denotes a fix smooth density.
\begin{remark} \label{rm:topology_M}
    Notice that, as a consequence, the set $\M$ can be identified with the set of probability measures $\{ \frac{\gamma}{T} \in \Pc(A \x \R^d \x [0,T]): \quad \gamma(A \x \mathrm{d}x \x \mathrm{d}t) =\ell(x)\,\mathrm{d}x\,\mathrm{d}t \}$ which is compact for the weak topology  because $A$ is compact.
\end{remark}

\begin{lemma}\label{lemma:continuity}
Let $(p_k)_{k\ge1}\subset\Sc$ and $(\Lambda^{\beta^k})_{k\ge1}\subset\M$ be sequences such that
\[
(p_k,\Lambda^{\beta^k})\xrightarrow[k\to\infty]{}(p,\Lambda^\beta)
\]
for some $(p,\Lambda^\beta)\in\Sc\times\M$. Then
\[
v^{p_k,\beta^k}\xrightarrow[k\to\infty]{} v^{p,\beta}
\]
for the locally uniform topology.
\end{lemma}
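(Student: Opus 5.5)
We argue by compactness and identification of the limit. Fix a compact interval $[s,t]\subset(0,T]$ and a compact set $Q\subset\R^d$. By \Cref{lemma:estimations}, the family $(v^{p_k,\beta^k})_{k\ge1}$ is uniformly bounded and uniformly H\"older continuous on $[s,t]\times Q$, with constants independent of $k$. Arzel\`a--Ascoli, combined with a diagonal extraction over an exhausting sequence of intervals and compacts, then yields a subsequence converging locally uniformly on $(0,T]\times\R^d$ to some continuous nonnegative $v$. It therefore suffices to show that every such subsequential limit equals $v^{p,\beta}$; uniqueness of the limit point then gives convergence of the whole sequence.

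To identify the limit, we pass to the limit at the level of laws. The processes $X^k:=X^{p_k,\beta^k}$ have uniformly bounded drift and bounded diffusion, so $(\Lc(X^k))_{k\ge1}$ is tight on $\Cc^d$. Since the marginal densities converge locally uniformly and are probability densities, Fatou's lemma gives $\int v(t,x)\,\mathrm{d}x\le1$. Tightness of the marginals, which follows from the bounded coefficients, then upgrades this to equality and gives weak convergence $v^{p_k,\beta^k}(t,x)\,\mathrm{d}x\to v(t,x)\,\mathrm{d}x$.

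We next consider the joint occupation measures
\[
\gamma^k(\mathrm{d}a,\mathrm{d}x,\mathrm{d}t):=\Lambda^{\beta^k}(t,x)(\mathrm{d}a)\,v^{p_k,\beta^k}(t,x)\,\mathrm{d}x\,\mathrm{d}t .
\]
The key claim is that $\gamma^k\to\Lambda^\beta(t,x)(\mathrm{d}a)\,v(t,x)\,\mathrm{d}x\,\mathrm{d}t$. This holds because convergence in $\M$ is tested against $\ell(x)\,\mathrm{d}x\,\mathrm{d}t$, while the density ratio $v^{p_k,\beta^k}/\ell$ converges locally uniformly and is locally bounded. Writing $\int\phi\,\mathrm{d}\gamma^k=\int\phi\,(v^{p_k,\beta^k}/\ell)\,\Lambda^{\beta^k}\ell$, we approximate $v/\ell$ by continuous functions on compacts and control the tails by uniform integrability.

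With this in hand, we pass to the limit in the martingale problem. For $f\in C^\infty_c$, the relation
\[
\int f\,v^{p_k,\beta^k}(t)-\int f\,\mathrm{d}\nu=\int_0^t\!\!\int\Big(\textstyle\int_A b(r,x,p_k(r,x),m^k_r,a)\cdot\nabla f\,\Lambda^{\beta^k}(\mathrm{d}a)+\tfrac12\mathrm{Tr}(\sigma\sigma^\top D^2f)\Big)v^{p_k,\beta^k}\,\mathrm{d}x\,\mathrm{d}r
\]
is the Fokker--Planck equation in weak form. The uniform Lipschitz continuity of $b$ in $(p,r)$ together with the locally uniform convergence $p_k\to p$ lets us replace $p_k,m^k$ by $p,m$, since $m^k_r\to m_r$ weakly by Scheff\'e's lemma. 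The resulting map $(x,a)\mapsto b(r,x,p(r,x),m_r,a)\cdot\nabla f(x)$ is continuous in $(x,a)$ for fixed $r$, and measurable and bounded in $r$. The convergence of $\gamma^k$ obtained above is only weak convergence in $(a,x,t)$ and does not directly handle merely measurable dependence in $t$. To deal with this, we use the fact that $\gamma^k$ has fixed Lebesgue time marginal, so weak convergence is equivalent to stable convergence, under which Carath\'eodory integrands may be used. Hence $v$ solves the linear Fokker--Planck equation with drift $\int_A b(\cdot,p,m,a)\Lambda^\beta(\mathrm{d}a)$, initial law $\nu$ and non-degenerate Lipschitz diffusion.

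The final step is to conclude $v=v^{p,\beta}$ by uniqueness for this linear Fokker--Planck equation with bounded measurable drift and non-degenerate Lipschitz $\sigma$. This uniqueness holds within the class of probability-measure-valued weak solutions, for instance via superposition (Figalli/Trevisan) and weak uniqueness of the SDE from Stroock--Varadhan, which applies since $\sigma$ is continuous and non-degenerate and $b$ is bounded. We expect the main obstacle to be the passage to the limit in the product $\Lambda^{\beta^k}\,v^{p_k,\beta^k}$, namely combining weak convergence of the kernels with only measurable time dependence; this is resolved through the stable-convergence argument above.
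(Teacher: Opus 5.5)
Your proposal is correct and follows the same route as the paper: compactness of $(v^{p_k,\beta^k})_{k\ge1}$ from the uniform H\"older bounds of \Cref{lemma:estimations}, then passage to the limit in the weak Fokker--Planck equation, then uniqueness for the linear Fokker--Planck equation, which identifies every subsequential limit with $v^{p,\beta}$. Your extra steps make explicit points that the paper only asserts: tightness to show the limit keeps unit mass, the density-ratio and stable-convergence argument for the product $\Lambda^{\beta^k}v^{p_k,\beta^k}$ when $b$ is only measurable in time, and uniqueness via superposition plus Stroock--Varadhan.
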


\begin{proof}
By the estimates of \Cref{lemma:estimations}, the sequence $(v^{p_k,\beta^k})_{k\ge1}$ is relatively compact for the locally uniform topology. Let $v$ be the limit of a convergent sub--sequence, which we do not relabel. Since each $v^{p_k,\beta^k}$ belongs to $\Sc$, it is immediate that $v\in\Sc$.
We now identify the limit $v$. Let $\phi:\R^d\to\R$ be a smooth test function, and fix $t\in(0,T]$. By local uniform convergence,
\begin{align*}
    \int_{\R^d} \phi(x)\,v(t,x)\,\mathrm{d}x
    =
    \lim_{k\to\infty}\int_{\R^d} \phi(x)\,v^{p_k,\beta^k}(t,x)\,\mathrm{d}x.
\end{align*}
By definition of $v^{p_k,\beta^k}$, we also have
\begin{align*}
    \int_\R \phi(x)\,v^{p_k,\beta^k}(t,x)\,\mathrm{d}x
    =
    \E\big[\phi(X^{p_k,\beta^k}_t)\big].
\end{align*}
Applying Itô's formula to $\phi(X^{p_k,\beta^k}_\cdot)$ gives
\begin{align*}
    \E\big[\phi(X^{p_k,\beta^k}_t)\big]
    &=
    \E[\phi(X_0)]
    +
    \E\left[
        \int_0^t \nabla \phi(X^{p_k,\beta^k}_s) \cdot
        \int_A b\big(s,X^{p_k,\beta^k}_s,p_k(s,X^{p_k,\beta^k}_s),m^k_s,a\big)\,
        \Lambda^{\beta^k}(s,X^{p_k,\beta^k}_s)(\mathrm{d}a)\,\mathrm{d}s
    \right]
    \\
    &\quad
    +
    \E\left[
        \frac12\int_0^t {\rm Tr} \left(\nabla^2 \phi(X^{p_k,\beta^k}_s) \sigma(s, X^{p_k,\beta^k}_s ) \sigma(s, X^{p_k,\beta^k}_s )^\top \right)\,\mathrm{d}s
    \right].
\end{align*}
Since $X^{p_k,\beta^k}_s$ has density $v^{p_k,\beta^k}(s,\cdot)$, this identity can be rewritten as
\begin{align*}
    \int_{\R^d} \phi(x)\,v^{p_k,\beta^k}(t,x)\,\mathrm{d}x
    =
    \E[\phi(\xi)]
    &+
    \int_0^t\int_{\R^d}
    \nabla \phi(x) \cdot\int_A b\big(s,x,p_k(s,x),m^k_s,a\big)\,
    \Lambda^{\beta^k}(s,x)(\mathrm{d}a)\,
    v^{p_k,\beta^k}(s,x)\,\mathrm{d}x\,\mathrm{d}s
    \\
    &+
    \frac12\int_0^t\int_{\R^d} {\rm Tr} \left(\nabla^2\phi(x) \sigma(s, x ) \sigma(s, x )^\top \right)\,v^{p_k,\beta^k}(s,x)\,\mathrm{d}x\,\mathrm{d}s.
\end{align*}
Passing to the limit as $k\to\infty$, and using:
\begin{itemize}
    \item the locally uniform convergence of $p_k$ to $p$,
    \item the locally uniform convergence of $v^{p_k,\beta^k}$ to $v$,
    \item the weak convergence of $\Lambda^{\beta^k}$ to $\Lambda^\beta$ in the topology of $\M$,
    \item the continuity and boundedness assumptions on $(b,\sigma)$,
\end{itemize}
we obtain
\begin{align*}
    \int_{\R^d} \phi(x)\,v(t,x)\,\mathrm{d}x
    =
    \E[\phi(\xi)]
    &+
    \int_0^t\int_{\R^d}
    \nabla \phi(x) \cdot \int_A b\big(s,x,p(s,x),m_s,a\big)\,
    \Lambda^\beta(s,x)(\mathrm{d}a)\,
    v(s,x)\,\mathrm{d}x\,\mathrm{d}s
    \\
    &+
    \frac12\int_0^t\int_{\R^d} {\rm Tr} \left(\nabla^2 \phi(x) \sigma(s, x ) \sigma(s, x )^\top \right)\,v(s,x)\,\mathrm{d}x\,\mathrm{d}s.
\end{align*}
Hence $v$ is a weak solution of the Fokker--Planck equation associated with the inputs $(p,\Lambda^\beta)$ and the initial condition $\Lc(\xi)$.
By uniqueness of the linear Fokker--Planck equation in this setting (see \Cref{prop:uniqueness_FP} for the general non--linear case), we conclude that
\[
v(t,x)\,\mathrm{d}x=\Lc(X^{p,\beta}_t)(\mathrm{d}x),\qquad t\in(0,T].
\]
In other words,
\[
v=v^{p,\beta}.
\]
Since the limit is uniquely identified, every convergent subsequence has the same limit. Therefore the whole sequence converges:
\[
v^{p_k,\beta^k}\xrightarrow[k\to\infty]{}v^{p,\beta}
\]
for the locally uniform topology.
\end{proof}

\medskip

For each $p\in\Sc$, we now define
\[
\Rc(p):=\big\{v^{p,\beta}:\ \beta\big\},
\qquad
\Ac(p):=\big\{(v^{p,\beta},\Lambda^\beta):\ \beta\big\}.
\]
With $m_t(\mathrm{d}x)=p(t,x)\,\mathrm{d}x$, the payoff functional may be written as
\begin{align*}
    J_m(\Lambda^\beta)
    =
    \Phi_p(\Lambda^\beta)
    &:=
    \int_{\R^d} g(x,m_T)\,v^{p,\beta}(T,x)\,\mathrm{d}x
    \quad+
    \int_{[0,T]\times\R^d\times A}
    L\big(t,x,p(t,x),m_t,a\big)\,
    \Lambda^\beta(t,x)(\mathrm{d}a)\,
    v^{p,\beta}(t,x)\,\mathrm{d}x\,\mathrm{d}t.
\end{align*}
Therefore,
\[
\sup_\beta J_m(\Lambda^\beta)
=
\sup_{(u,\Gamma)\in\Ac(p)} \Phi_p(\Gamma).
\]

By \Cref{lemma:continuity} and the continuity assumptions on the coefficients, the map
\[
(p,\Gamma)\longmapsto \Phi_p(\Gamma)
\]
is continuous.

\medskip

We next introduce the subset $\Sc_c\subset\Sc$ of functions $u\in\Sc$ such that, for every interval $[s,t]\subset(0,T]$ and every compact set $Q\subset\R$,
\begin{align*}
    \sup_{(r,x)\in[s,t]\times Q}|u(r,x)|
    +
    \sup_{\substack{(r,x),(r',x')\in[s,t]\times Q\\(r,x)\neq(r',x')}}
    \frac{|u(r,x)-u(r',x')|}
    {|r-r'|^{\alpha/2}+|x-x'|^\alpha}
    \le C,
\end{align*}
where the constants $\alpha$ and $C$ are those given by \Cref{lemma:estimations}.

\medskip

For each $p\in\Sc_c$, \Cref{lemma:estimations} implies that
\[
\Rc(p)\subset\Sc_c,
\]
and moreover $\Rc(p)$ is compact in $\Sc$ for the locally uniform topology. Thus, $\Rc$ may be viewed as a set--valued map from $\Sc_c$ into the family of compact subsets of $\Sc_c$.

\medskip
Using \Cref{lemma:continuity}, one checks that the correspondence $\Rc$ is both lower and upper hemicontinuous.

We now define the maximizing correspondence
\[
\Rc^\star(p)
:=
\left\{
u\in\Rc(p):
\ \Phi_p(\Gamma)=\sup_{(u',\Gamma')\in\Ac(p)}\Phi_p(\Gamma')
\text{ for some }(u,\Gamma)\in\Ac(p)
\right\}.
\]
In other words, $\Rc^\star(p)$ is the set of densities generated by optimal controls when the reference density is $p$.

\medskip
By Berge's maximum theorem (\cite[Theorem~17.31]{aliprantis2006infinite}), the correspondence $\Rc^\star$ has nonempty compact values and is upper hemicontinuous. In particular, its graph is closed. Since the values are convex and $\Sc_c$ is a convex compact subset of a locally convex topological vector space, we may apply the Kakutani--Fan--Glicksberg fixed--point theorem (see \cite[Corollary~17.55]{aliprantis2006infinite}) to deduce the existence of
\[
p^\star\in\Rc^\star(p^\star).
\]
Thus there exists some $\Lambda^{\beta^\star}\in\M$ such that
\[
(p^\star,\Lambda^{\beta^\star})\in\Ac(p^\star)
\]
and
\[
\Phi_{p^\star}(\Lambda^{\beta^\star})
=
\sup_{(u',\Gamma')\in\Ac(p^\star)}\Phi_{p^\star}(\Gamma').
\]
This exactly means that the pair $(p^\star,\Lambda^{\beta^\star})$ satisfies the consistency and optimality conditions of the mean field game. Therefore, by setting $m^\star_t(\mathrm{d}x):=p^\star(t,x)\,\mathrm{d}x$, we easily check that $m^\star$ is a relaxed MFG solution associated to $\Lambda^{\beta^\star}$.

\subsubsection{The case without common noise with finite--support graphon}

We now consider the case where the graphon is approximated by a step function with finite support.

\medskip

Let
\[
(I_i^k)_{1\le i\le k}
\]
be a measurable partition of $[0,1]$ into pairwise disjoint sets {\color{black} of length $\frac{1}{k}$}, and for each $1\le i\le k$, let
\[
u_i^k\in I_i^k.
\]
We assume that the graphon $\Gr$ is approximated by the step kernel $\Gr^k$ defined by
\[
\Gr^k(u,v):=\Gr(u_i^k,u_j^k),\qquad (u,v)\in I_i^k\times I_j^k.
\]
In other words, $\Gr^k$ is constant on each rectangle $I_i^k\times I_j^k$.

\medskip

Fix $k$ control kernels
\[
\Lambda^{i,\beta}:[0,T]\times\R^d\to\Pc(A),\qquad 1\le i\le k,
\]
and $k$ density profiles
\[
(p^1,\dots,p^k) \subset \Sc.
\]
For each $i\in\{1,\dots,k\}$, we consider the state process $X^i$ solving
\begin{align*}
    \mathrm{d}X^i_t
    =
    \int_A b\big(t,X^i_t,\overline p_i^k(t,X^i_t),\Rr_{\overline{m}_t}(u^k_i),a\big)\,
    \Lambda^{i,\beta}(t,X^i_t)(\mathrm{d}a)\,\mathrm{d}t
    +\sigma(t,X^i_t)\,\mathrm{d}W_t,\quad \overline{m}_t(\mathrm{d}x,\mathrm{d}u)
    :=
    \sum_{j=1}^k p^j(t,x)\,\1_{I^k_j}(u)\,\mathrm{d}u\,\mathrm{d}x. 
\end{align*}
where
\begin{align*}
    \overline p_i^k(t,x)
    :=
    \frac1k\sum_{j=1}^k \Gr^k(u_i^k,u_j^k)\,p^j(t,x).
\end{align*}
Thus, for a representative agent of class $i$, the interaction term is obtained by averaging the densities
$(p^j)_{1\le j\le k}$ against the weights induced by the step graphon $\Gr^k$.
Moreover, for each $i$, we denote by
\[
m_t^{i,\beta}=\Lc(X_t^i)
\]
the law of $X_t^i$, and we write
\[
m_t^{i,\beta}(\mathrm{d}x)=q^{i,\beta}(t,x)\,\mathrm{d}x.
\]

\medskip

Given $(p^1,\dots,p^k)$ and controls $(\Lambda^{1,\beta},\dots,\Lambda^{k,\beta})$, we define the associated payoff by
\begin{align*}
    &\Phi^k_{p^1,\dots,p^k}
    \big(q^{1,\beta},\dots,q^{k,\beta},\Lambda^{1,\beta},\dots,\Lambda^{k,\beta}\big)
    \\
    &:=
    \frac1k\sum_{i=1}^k
    \int_{\R^d} g(x,\Rr_{\overline{m}_T}(u^k_i))\,q^{i,\beta}(T,x)\,\mathrm{d}x
    +
    \frac1k\sum_{i=1}^k
    \int_{[0,T]\times\R^d\times A}
    L\big(t,x,\overline p_i^k(t,x),\Rr_{\overline{m}_t}(u^k_i),a\big)\,
    \Lambda^{i,\beta}(t,x)(\mathrm{d}a)\,
    q^{i,\beta}(t,x)\,\mathrm{d}x\,\mathrm{d}t.
\end{align*}
This is the natural finite--dimensional analogue of the payoff introduced in the graphon--free case : one averages over the $k$ classes, and each class $i$ evolves according to its own control kernel $\Lambda^{i,\beta}$ and the interaction term $\overline p_i^k$ generated by the family $(p^j)_{1\le j\le k}$.

\medskip

Our objective is to find a feedback profile
\[
\alpha=(\Lambda^{1,\alpha},\dots,\Lambda^{k,\alpha})
\]
such that the corresponding laws are consistent with the prescribed densities, that is,
\[
m_t^{i,\alpha}(\mathrm{d}x)=p^i(t,x)\,\mathrm{d}x,
\qquad 1\le i\le k,\quad t\in[0,T],
\]
and such that this profile is optimal in the sense that
\begin{align*}
    \sup_{\beta}
    \Phi^k_{p^1,\dots,p^k}
    \big(q^{1,\beta},\dots,q^{k,\beta},\Lambda^{1,\beta},\dots,\Lambda^{k,\beta}\big)
    =
    \Phi^k_{p^1,\dots,p^k}
    \big(p^1,\dots,p^k,\Lambda^{1,\alpha},\dots,\Lambda^{k,\alpha}\big).
\end{align*}

\medskip

The construction is obtained by repeating the argument of the previous subsection, but now in dimension $k$ (or dimension $d \x k$ if we incorporate the space dimension $d$), that is, for the vector of densities
\[
(p^1,\dots,p^k).
\]
More precisely, one considers the corresponding set--valued map which associates, to a $k$-tuple of density profiles, the family of attainable densities generated by admissible controls. By the same compactness estimates as before, one obtains uniform local Hölder bounds for each component $q^{i,\beta}$, and hence compactness of the associated image sets. The analogue of \Cref{lemma:continuity} also holds componentwise: if the tuple of reference densities and the tuple of control kernels converge, then the corresponding tuple of induced densities converges locally uniformly. As in the one--population case, the payoff functional is continuous with respect to these variables.

\medskip
Therefore, by applying Berge's maximum theorem to the finite--dimensional optimization problem, one obtains a nonempty compact--valued maximizing correspondence. The same fixed--point argument as in the previous subsection, now carried out on the product space of $k$ density profiles, yields the existence of a fixed point
\[
(p^1,\dots,p^k)
\]
together with a maximizing family of control kernels
\[
(\Lambda^{1,\alpha},\dots,\Lambda^{k,\alpha}),
\]
such that the consistency condition
\[
m_t^{i,\alpha}(\mathrm{d}x)=p^i(t,x)\,\mathrm{d}x,\qquad 1\le i\le k,
\]
is satisfied and the associated payoff is optimal.

\medskip

Finally, from the family $(\Lambda^{i,\alpha})_{1\le i\le k}$, we define a control kernel on
$[0,T]\times\R^d\times[0,1]$ by
\[
\Lambda^{\hat\alpha}(t,x,u):=\Lambda^{i,\alpha}(t,x),
\qquad u\in I_i^k.
\]
Since the graphon $\Gr^k$ is constant on each cell $I_i^k\times I_j^k$, this piecewise-defined control is well adapted to the finite--support graphon structure. The corresponding measure flow
\[
\overline m^\alpha_t(\mathrm{d}x,\mathrm{d}u)
=
\sum_{i=1}^k p^i(t,x)\,\1_{I^k_i}(u)\,\mathrm{d}u\,\mathrm{d}x
\]
then satisfies both the consistency and optimality requirements of the mean field game associated with $\Gr^k$.
Consequently, $\overline m^\alpha$ is a relaxed MFG solution associated with the control kernel $\Lambda^{\hat\alpha}$.

\subsubsection{The case without common noise with general graphon (and proof of {\rm \Cref{thm:existence_mfg_no_common}})} \label{sec_proof_no_common_noise}

We now consider the general case of an arbitrary graphon $\Gr$.
Let $\Gr:[0,1]^2\to\Er$ be a graphon i.e. a Borel measurable map. By standard approximation results, there exists a sequence of step graphons
\[
(\Gr^k)_{k\ge1}
\]
such that, for each $k\ge1$, the graphon $\Gr^k$ is piecewise constant on each rectangle
\[
I_i^k\times I_j^k,\qquad 1\le i,j\le k,
\]
and
\[
\Gr^k(u,v)\xrightarrow[k\to\infty]{}\Gr(u,v)
\qquad\text{for a.e. }(u,v)\in[0,1]^2.
\]

For each $k\ge1$, by the previous subsection, there exists a mean field game solution
\[
\overline m^k=(\overline m_t^k)_{t\in[0,T]}
\]
associated with some admissible control kernel $\Lambda^{\alpha^k}$ for the graphon $\Gr^k$.

\medskip

By \Cref{prop:cong_graphon} (see in Appendix), the sequence $(\overline m^k)_{k\ge1}$ is relatively compact in the weak convergent topology. Therefore, up to extraction of a subsequence (which we do not relabel), there exists a flow of probability measures
\[
\overline m=(\overline m_t)_{t\in[0,T]}
\]
such that
\[
\overline m^k \longrightarrow \overline m.
\]

Moreover, again by \Cref{prop:cong_graphon}, the limit flow admits a density representation of the form
\[
\overline m_t(\mathrm{d}x,\mathrm{d}u)=p(t,x,u)\,\mathrm{d}x\,\mathrm{d}u,\qquad t\in[0,T],
\]
and there exists an admissible control kernel
\[
\Lambda^\alpha:[0,T]\times\R^d\times[0,1]\to\Pc(A)
\]
such that, if $(X^\alpha,U)$ denotes the associated controlled state process, then
\[
\overline m_t=\Lc(X_t^\alpha,U),\qquad t\in[0,T],
\]
and $(X^\alpha,U)$ satisfies
\begin{align*}
    \mathrm{d}X_t^\alpha
    =
    \int_A
    b\Big(
        t,
        X_t^\alpha,
        \overline p(t,X_t^\alpha,U),
        \Rr_{\overline m_t}(U),
        a
    \Big)\,
    \Lambda^\alpha(t,X_t^\alpha,U)(\mathrm{d}a)\,\mathrm{d}t
    +\sigma(t,X^\alpha_t)\,\mathrm{d}W_t,
\end{align*}
where
\begin{align*}
    \overline p(t,x,u)
    :=
    \int_0^1 \Gr(u,v)\,p(t,x,v)\,\mathrm{d}v.
\end{align*}
In addition, the payoff is stable along the approximation:
\begin{align*}
    \lim_{k\to\infty} J_{\overline m^k}(\Lambda^{\alpha^k})=J_{\overline m}(\Lambda^\alpha).
\end{align*}

\medskip

It remains to verify the optimality condition for the limit pair $(\overline m,\Lambda^\alpha)$.
Let $\Lambda^\beta:[0,T]\times\R^d\times[0,1]\to\Pc(A)$ be an admissible control kernel. For simplicity, and in order to justify the approximation argument directly, let us first consider the case where $\Lambda^\beta$ is Lipschitz in $(x,u)$ (uniformly in $t$). Then we may construct a sequence of admissible control kernels
\[
(\Lambda^{\beta^k})_{k\ge1}
\]
such that, for each $k\ge1$, the map
\[
u\longmapsto \Lambda^{\beta^k}(t,x,u)
\]
is piecewise constant on each interval $I_i^k$, and
\[
\Lambda^{\beta^k}(t,x,u)\xrightarrow[k\to\infty]{}\Lambda^\beta(t,x,u)
\qquad\text{for a.e. }(t,x,u)\in[0,T]\times\R^d\times[0,1].
\]
For instance, one may take
\[
\Lambda^{\beta^k}(t,x,u):=\Lambda^\beta(t,x,u_i^k),\qquad u\in I_i^k.
\]
The Lipschitz continuity of $\Lambda^\beta$ ensures that this approximation is consistent as the mesh of the partition tends to zero.

By the same stability result used above (namely the continuity statement underlying \Cref{prop:cong_graphon}), the associated payoffs satisfy
\begin{align*}
    \lim_{k\to\infty} J_{\overline m^k}(\Lambda^{\beta^k})=J_{\overline m}(\Lambda^\beta).
\end{align*}

Since, for each $k\ge1$, the pair $(\overline m^k,\Lambda^{\alpha^k})$ is a mean field game solution for the graphon $\Gr^k$, we have
\[
J_{\overline m^k}(\Lambda^{\alpha^k})\ge J_{\overline m^k}(\Lambda^{\beta^k}),\qquad \forall k\ge1.
\]
Passing to the limit as $k\to\infty$ yields
\begin{align*}
    J_{\overline m}(\Lambda^\alpha)
    =
    \lim_{k\to\infty}J_{\overline m^k}(\Lambda^{\alpha^k})
    \ge
    \lim_{k\to\infty}J_{\overline m^k}(\Lambda^{\beta^k})
    =
    J_{\overline m}(\Lambda^\beta).
\end{align*}
Hence
\[
J_{\overline m}(\Lambda^\alpha)\ge J_{\overline m}(\Lambda^\beta)
\]
for every admissible Lipschitz control kernel $\Lambda^\beta$.

\medskip

Under the standing density/approximation assumptions on the admissible class, Lipschitz controls are dense in the full class of admissible controls for the topology used in the stability result. Therefore, by the density argument of \Cref{prop:cong_graphon} and continuity of the payoff functional, the above inequality extends from Lipschitz controls to every admissible control kernel $\Lambda^\beta$.
Consequently, the pair $(\overline m,\Lambda^\alpha)$ satisfies the optimality condition. Since we already know that
\[
\overline m_t=\Lc(X_t^\alpha,U),\qquad t\in[0,T],
\]
the consistency condition also holds.
We conclude that $\overline m$ is a relaxed mean field game solution associated with $\Lambda^\alpha$ for the graphon $\Gr$.

\subsubsection{The case without common noise with general graphon (and proof of {\rm \Cref{thm:existence_mfg_no_common}}, the case of strict MFG)}
\medskip
Let us now turn to the existence of strict MFG solutions. When \Cref{assum:conv_cond} is satisfied, proof is identical to the proof of existence of strict MFG solution as in \Cref{thm:existence} (see the proof of \Cref{thm:existence} in the next section). We therefore focus on the proof when \Cref{assum:hamiltonian_cond} is satisfied. Let $\overline{m}$ be the previous relaxed MFG solution associated to $\Lambda^\alpha$. Consider the BSDE
\begin{align} \label{eq:BSDE_bellman}
    Y_t
    &=
    g(X_0+W_T,\Rr_{\overline{m}_T}(U))
    +
    \int_t^T
            H(s,X_0+W_s, \overline{p}_{\overline{m}_s}(U),\Rr_{\overline{m}_s}(U),Z_s)
        \mathrm{d}s
    -\int_t^T Z_s\,\mathrm{d}W_s,
\end{align}

Under standard Markovian BSDE assumptions (e.g.\ coefficients measurable and Lipschitz in the state, square integrability, see for instance \citeauthor*{JinRepresentation2002} \cite{JinRepresentation2002}), one may apply the usual Markovian representation result to obtain that
\[
    Z_t = v(t,X_0+W_t,U), \qquad \text{for a.e.\ }t\in[0,T],
\]
for some Borel map $v:[0,T]\times\R^d\x [0,1]\to\R^{d}$.

\medskip
Fix a Borel map $\Lambda^\beta:[0,T] \x \R^d \x [0,1] \to \Pc(A)$. Let $Z^\beta$ be the stochastic exponential defined by $Z^\beta_0=1$ and
\[
    \mathrm{d}Z^\beta_t
    =
    Z^\beta_t
    \left(
        \int_{A}
            \sigma(t,X_0+W_t)^{-1}\,b\bigl(t,X_0+W_t, \overline{p}_{\overline{m}_t}(X_0+W_t,U),\Rr_{\overline{m}_t}(U), a) \,\Lambda^\beta(t,X_0+W_t,U)(\mathrm{d}a)\bigr)
    \right)\cdot \mathrm{d}W_t,
\]
and define $\P^\beta$ by $\mathrm{d}\P^\beta := Z^\beta_T\,\mathrm{d}\P$.
By Girsanov's theorem and the fact that $W \perp (X_0,U)$, under $\P^\beta$ the law of $(X_0+W,U)$ coincides with the law of the controlled state process $(X^{\overline{m},\beta},U)$, i.e.
\[
    \Lc^{\P^\beta}(X_0+W_\cdot, U)=\Lc(X^{\overline{m},\beta},U).
\]
Standard BSDE arguments then yield the characterization (see for instance \citeauthor*{el1997backward} \cite[Proposition 3.4.]{el1997backward})
\[
    \sup_{\beta} J_{\overline{m}}(\Lambda^\beta)=\E[Y_0],
\]
and, moreover,
\begin{align*}
    J_{\overline{m}}(\Lambda^\alpha)-\E[Y_0]
    =
    \E^{\P^\alpha}\!\bigg[
        \int_0^T
        \Big(
            &-H(s,X_0+W_s, \overline{p}_{\overline{m}_s}(X_0+W_s,U),\Rr_{\overline{m}_s}(U),Z_s)
            \\
            &+
            \int_{A}
                h\bigl(s,X_0+W_s, \overline{p}_{\overline{m}_s}(X_0+W_s,U),\Rr_{\overline{m}_s}(U),Z_s,a\bigr)
            \Lambda^\alpha(s,X_0+W_s)(\mathrm{d}a)
        \Big)\mathrm{d}s
    \bigg].
\end{align*}
Since $(\overline{m},\Lambda^\alpha)$ is an equilibrium, $ J_{\overline{m}}(\Lambda^\alpha)=\sup_{\beta} J_{\overline{m}}(\Lambda^\beta)$, hence the left--hand side is zero. By definition of $H$ as a supremum, the integrand inside the last expectation is non--positive, and therefore it must vanish $\mathrm{d}s\otimes\P^\alpha$--a.e.:
\[
    H(s,X_0+W_s, \overline{p}_{\overline{m}_s}(X_0+W_s,U),\Rr_{\overline{m}_s}(U),Z_s)
            =
            \int_{A}
                h\bigl(s,X_0+W_s, \overline{p}_{\overline{m}_s}(X_0+W_s,U),\Rr_{\overline{m}_s}(U),Z_s,a\bigr)
            \Lambda^\alpha(s,X_0+W_s)(\mathrm{d}a).
\]
Using the representation $Z_s=v(s,X_0+W_s, U)$ and the uniqueness of the maximizer provided by $\widehat a$,
we conclude that, for a.e.\ $(s,x)$,
\begin{align}\label{eq:dirac_solution}
    \Lambda^\alpha(t,x,u)
    =
    \delta_{\hat a\bigl(t,x, \overline{p}_{\overline{m}_t}(x,u),\Rr_{\overline{m}_s}(u),v(s,x,u)\bigr)},
    \qquad\text{for a.e.\ }(t,x,u).
\end{align}
This proves the claim.

\subsubsection{A characterization of a MFG solution (proof of \Cref{prop:representation_mfg_stric} and \Cref{cor:representation_mfg_stric})}

This is essentially a direct application of the previous section together with \Cref{eq:dirac_solution}. The only additional point is to justify the equivalence between \Cref{eq:BSDE_bellman} and \Cref{eq:gradient_cond}.

\medskip

Let \((Y,Z)=(Y,Z^1,\dots,Z^d)\) be a solution of \Cref{eq:BSDE_bellman}. By {\color{black}\cite[Theorem 4.2.]{JinRepresentation2002}}, the martingale integrand admits a Markovian representation of the form
\[
Y_t = \mathcal{U}(t,X_0+W_t,U),\qquad Z^i_t = \Vc^i(t,X_0+W_t,U),\qquad i=1,\dots,d,
\]
for some measurable map $\Uc$ and \(\Vc=(\Vc^1,\dots,\Vc^d)\) satisfying \Cref{eq:gradient_cond}. Since \(U\) is independent of \(W\), the presence of the additional label variable does not create any difficulty for applying their argument.  \Cref{eq:gradient_cond} essentially comes down to the fact that $\partial_{x_i} \Uc=\Vc^i$ by using an integration by part.

\medskip

Conversely, assume that \(v=(v^1,\dots,v^d)\) is a solution of \Cref{eq:gradient_cond}. Define
\[
\widehat{Z}_t^i := v^i(t,X_0+W_t,U),\qquad i=1,\dots,d,
\]
and
\begin{align*}
    \widehat{Y}_t
    &:=
    \E\Bigg[
        g\bigl(X_0+W_T,\Rr_{\overline{m}_T}(U)\bigr)
        \\
        &\hspace{1.4cm}
        +
        \int_t^T
        H\Big(
            s,
            X_0+W_s,
            \overline{p}_{\overline{m}_s}(X_0+W_s,U),
            \Rr_{\overline{m}_s}(U),
            v(s,X_0+W_s,U)
        \Big)\,\mathrm{d}s
        \,\Big|\,
        X_0,\,(W_r)_{0\le r\le t},\,U
    \Bigg]
    \\
    &=\widehat{\Uc}(t,X_0+W_t,U),
\end{align*}
with a map $\widehat{\Uc}$ obtained by calculating the conditional expectation using the independence of the increment of the Brownian motion. We can check $\partial_{x_i} \widehat{\Uc}=v^i$.
By construction, \(\widehat{Y}\) is adapted to the filtration generated by \((X_0,W,U)\). Applying the martingale representation theorem, together with the definition of \(\widehat{Z}\), and then computing \(\mathrm{d}\widehat{Y}_t\), we obtain that
\[
(\widehat{Y},\widehat{Z})=(\widehat{Y},\widehat{Z}^1,\dots,\widehat{Z}^d)
\]
solves \Cref{eq:BSDE_bellman}.

\medskip

Therefore, \Cref{eq:BSDE_bellman} and \Cref{eq:gradient_cond} are equivalent. The desired characterization of the MFG solution then follows directly from the representation result established in the previous section.

{\color{black}

\subsubsection{Existence : the case with common noise (and proof of \Cref{thm:existence})}

We now explain how to construct a relaxed mean field game solution in the presence of common noise. The strategy relies on the classical translation argument. More precisely, for each fixed realization of the common noise, we first consider the associated deterministic problem obtained by freezing the common--noise path. This yields a family of deterministic mean field game solutions parametrized by the common noise trajectory. We then reconstruct a random equilibrium flow by means of a measurable selection procedure in the spirit of \cite{karoui2013capacities}. Similar measurable selection arguments have previously been employed in the context of mean field control in \cite{djete2019mckean} and for mean field games in \cite{djete2023stackelbergmeanfieldgames}.

\medskip

Let $c\in C([0,T];\R^d)$ be a deterministic path. For such a path, we define the translated coefficients
\begin{align*}
    b^c(t,y,e,M,a)
    &:=
    b\bigl(t,\,y+ c(t),\,e,\,M^c_t,\,a\bigr),\qquad \sigma^c(t,y)
    :=
    \sigma\bigl(t,\,y+c(t)\bigr)
\end{align*}
where
\begin{align*}
    M^c_t
    :=
    \Lc^M\bigl(E,\,X+c(t)\bigr).
\end{align*}
In other words, if under the probability measure $M$ the pair $(E,X)$ has law $M$, then $M^c_t$ denotes the law of $(E,X+ c(t))$, that is, the push--forward of $M$ under the map $(e,x)\longmapsto (e,x+ c(t)).$

\medskip
We recall that we denote by $p_{m}(x,u)$ is the density of a probability $m$ when it exists.
For a candidate reference flow $\overline m=(\overline m_t)_{t\in[0,T]}$ with
\[
q_{\overline{m}_t}(y,u):=p_{({\rm Id}_1 + c(t),\,{\rm Id}_2) \# \overline{m}_t}(y+ c(t),u),
\]
where the map $({\rm Id}_1 + c(t),\,{\rm Id}_2)$ is such that $({\rm Id}_1 + c(t),\,{\rm Id}_2)(x,u)=(x+c(t),u)$ for all $(x,u) \in \R^d \x [0,1]$. We define, as before,
\begin{align*}
    \overline q_{\overline{m}_t}(x,u)
    :=
    \int_0^1 \Gr(u,v)\,q_{\overline{m}_t}(x,v)\,\mathrm{d}v.
\end{align*}
We also use the notation
\[
\Rr_t(u)=\Rr_{({\rm Id}_1 + c(t),\, {\rm Id}_2) \# \overline{m}_t}(u)
\]
for the corresponding graphon--induced environment law.

\medskip

Let $\widetilde{m}=(\widetilde{m}_t)_{t\in[0,T]}$ be another deterministic flow of probability measures on $\R^d\times[0,1]$, and let
\[
\Lambda:[0,T]\times\R^d\times[0,1]\to\Pc(A)
\]
be an admissible control kernel. For each $t\in[0,T]$ and each smooth compactly supported test function $f\in C_c^\infty(\R^d\times[0,1]),$
we define
\begin{align*}
    N^{t,f}_{c,\overline m}[\widetilde{m},\Lambda]
    :=
    \langle f,\widetilde{m}_t\rangle
    -\langle f,\nub\rangle
    &-
    \int_0^t\int_{\R^d\times[0,1]\times A}
    \nabla_x f(x,u)\cdot b^c\bigl(s,x,\overline q_{\overline{m}_s}(x,u),\Rr_s(u),a\bigr)
    \,\Lambda(s,x,u)(\mathrm{d}a)\,\widetilde{m}_s(\mathrm{d}x,\mathrm{d}u)\,\mathrm{d}s
    \\
    &-
    \frac12
    \int_0^t\int_{\R^d\times[0,1]}
    {\rm Tr}\!\Big[\sigma^c(s,x)\sigma^c(s,x)^\top \nabla_x^2 f(x,u)\Big]\,
    \widetilde{m}_s(\mathrm{d}x,\mathrm{d}u)\,\mathrm{d}s.
\end{align*}
Here,
\[
\langle f,\widetilde{m}_t\rangle:=\int_{\R^d\times[0,1]} f(x,u)\,\widetilde{m}_t(\mathrm{d}x,\mathrm{d}u).
\]
Thus, the condition
\[
N^{t,f}_{c,\overline m}[\widetilde{m},\Lambda]=0
\]
for all $t$ and all smooth test functions $f$ is precisely the weak formulation of the Fokker--Planck equation associated with the frozen common--noise path $c$ and the reference flow $\overline m$.

\medskip

For each fixed pair $(t,f)$, the map
\begin{align*}
    C([0,T];\R^d)\times C([0,T];\Pc(\R^d\times[0,1]))^2\times \M
    \ni
    (c,\overline m,\widetilde{m},\Lambda)
    \longmapsto
    N^{t,f}_{c,\overline m}[\widetilde{m},\Lambda]
    \in\R
\end{align*}
{\color{black}is Borel measurable (see \Cref{rm:topology_M} for the topology considered for $\M$ where we just add the uniform distribution on $[0,1]$ for the associated component)}. Indeed, all operations involved (evaluation at time $t$, integration against continuous test functions, composition with the translated coefficients, and integration with respect to the measure arguments) are Borel measurable under the standing continuity and boundedness assumptions on the coefficients.

\medskip

We now define
\begin{align*}
    \Rc^{t,f}
    :=
    \Bigl\{
        (c,\overline m,\widetilde{m},\Lambda)
        \;:\;
        N^{t,f}_{c,\overline m}[\widetilde{m},\Lambda]=0
    \Bigr\}.
\end{align*}
Let $\X$ be a fixed countable dense subset of
\[
[0,T]\times C_c^\infty(\R^d\times[0,1]).
\]
We then set
\begin{align*}
    \Rc
    :=
    \bigcap_{(t,f)\in\X}\Rc^{t,f}.
\end{align*}
Since each $\Rc^{t,f}$ is Borel, the set $\Rc$ is Borel measurable as a countable intersection of Borel sets.
For $(c,\overline m,\widetilde{m},\Lambda)\in \Rc$, we define the frozen--path payoff functional by
\begin{align*}
    \Phi_{c,\overline m}(\widetilde{m},\Lambda)
    &:=
    \int_{\R^d\times[0,1]}
    g^c\bigl(x,\Rr_T(u)\bigr)\,\widetilde{m}_T(\mathrm{d}x,\mathrm{d}u)
    +
    \int_0^T\int_{\R^d\times[0,1]\times A}
    L^c\bigl(t,x,\overline q_{\overline{m}_t}(x,u),\Rr_t(u),a\bigr)\,
    \Lambda(t,x,u)(\mathrm{d}a)\,
    \widetilde{m}_t(\mathrm{d}x,\mathrm{d}u)\,\mathrm{d}t,
\end{align*}
where $g^c$ and $L^c$ denote the translated terminal and running reward functions associated with the path $c$.

We then introduce the value function
\begin{align*}
    V(c,\overline m)
    :=
    \sup\Bigl\{
        \Phi_{c,\overline m}(\widetilde{m},\Lambda)
        \;:\;
        (c,\overline m,\widetilde{m},\Lambda)\in\Rc
    \Bigr\}.
\end{align*}

\medskip

Since $\Rc$ is Borel measurable and the map
\begin{align*}
    (c,\overline m,\widetilde{m},\Lambda)
    \longmapsto
    \Phi_{c,\overline m}(\widetilde{m},\Lambda)
\end{align*}
is Borel measurable, the measurable projection theorem of \cite[Proposition 2.21]{karoui2013capacities} implies that the map
\[
(c,\overline m)\longmapsto V(c,\overline m)
\]
is universally measurable.

\medskip

We now define the set of frozen--path mean field game solutions:
\begin{align*}
    \Rc^\star
    :=
    \Bigl\{
        (c,\overline m,\Gamma)
        \;:\;
        V(c,\overline m)\le \Phi_{c,\overline m}(\overline m,\Gamma)
    \Bigr\}
    \cap
    \bigcap_{(t,f)\in\X}
    \Bigl\{
        (c,\overline m,\Gamma)
        \;:\;
        N^{t,f}_{c,\overline m}[\overline m,\Gamma]=0
    \Bigr\}.
\end{align*}
Thus, $(c,\overline m,\Gamma)\in\Rc^\star$ means that:
\begin{itemize}
    \item the pair $(\overline m,\Gamma)$ satisfies the frozen Fokker--Planck equation associated with $c$;
    \item the same pair is optimal among all admissible competitors for the frozen problem.
\end{itemize}
Since $V$ is universally measurable and the constraints are Borel, the set $\Rc^\star$ is universally measurable and non--empty by an application of {\rm \Cref{thm:existence_mfg_no_common}} to the framework with shifted coefficients (see the proof in \Cref{sec_proof_no_common_noise}).

\medskip

By \cite[Theorem 2.20]{karoui2013capacities}, there exists a universally measurable map
\[
\Psi:C([0,T];\R^d)\to C([0,T];\Pc(\R^d\times[0,1]))\times\M
\]
such that, for every $c\in C([0,T];\R^d)$,
\[
\Psi(c)=(\overline m^c,\Gamma^c)
\qquad\text{and}\qquad
(c,\Psi(c))\in\Rc^\star.
\]
In particular, for every deterministic path $c$, the pair $(\overline m^c,\Gamma^c)$ is a solution of the frozen mean field game problem associated with $c$, and
\begin{align} \label{eq:FP-frozen}
N^{t,f}_{c,\overline m^c}[\overline m^c,\Gamma^c]=0,
\qquad \forall t\in[0,T],\ \forall f\in C_c^\infty(\R^d\times[0,1]).
\end{align}

\medskip

We now lift this family of frozen solutions into a genuine common--noise solution.
We recall that $W^\circ$ is a Brownian motion, independent of $(W,X_0,U)$. For each $\omega\in\Omega$ and $t\in[0,T]$, define the random flow
\begin{align*}
    \mub_t(\omega)(\mathrm{d}x,\mathrm{d}u)
    :=
    \int_{\R^d}
    \delta_{y+\sigma_\circ W_t^\circ(\omega)}(\mathrm{d}x)\,
    \overline m_t^{\sigma_0W^\circ(\omega)}(\mathrm{d}y,\mathrm{d}u).
\end{align*}
That is, $\mub_t(\omega)$ is obtained by translating the frozen--path solution $\overline m_t^{\sigma_\circ\, W^\circ(\omega)}$ by the common--noise displacement $\sigma_0W_t^\circ(\omega)$.
Similarly, define the random relaxed control measure by
\begin{align*}
    \widehat\Gamma(t,\omega)(\mathrm{d}a,\mathrm{d}x,\mathrm{d}u)
    :=
    \int_{\R^d}
    \Gamma^{\sigma_\circ W^\circ(\omega)}\bigl(t,x-\sigma_\circ W_t^\circ(\omega),u\bigr)(\mathrm{d}a)\,
    \mub_t(\omega)(\mathrm{d}x,\mathrm{d}u).
\end{align*}
In other words, at the translated state $x$, we apply the frozen--path feedback corresponding to the shifted state
\[
x-\sigma_\circ W_t^\circ(\omega).
\]

\medskip
Notice that the processes $(\mub,\widehat\Gamma)$ are not necessarily adapted to the common noise filtration i.e. $(\sigma\{ \sigma_\circ\, W^\circ_s:\,s \in [0,t]\})_{t \in [0,T]}$. However, they are adapted to their own natural filtration.
Since, for each path $c$, the measure $\overline m_t^c$ is absolutely continuous with respect to the Lebesgue measure in the $x$--variable, the same is true for the translated random measure $\mub_t(\omega)$. We check that
\begin{align*}
    q_{\overline{m}^{\sigma_\circ  W^\circ_t(\om)}}(y,u)=p_{\mub_t(\om)}(y+\sigma_{\circ} W^\circ_t,u)
\end{align*}

\medskip

We then define the conditional feedback kernel
\begin{align} \label{eq:mfg_control}
    \widehat\Gamma(t,x,u,\mub_t)
    :=
    \E\Big[
        \Gamma^{\sigma_\circ W^\circ}\bigl(t,x-\sigma_\circ W_t^\circ,u\bigr)(\mathrm{d}a)
        \,\big|\,\mub_t
    \Big].
\end{align}
This is a Borel kernel on
\[
[0,T]\times\R^d\times[0,1]\times \Pc(\R^d\times[0,1]).
\]

\medskip

Let us now consider the filtration
\[
\Gc_t:=\sigma\{\sigma_\circ W_s^\circ,\mub_s:\ 0\le s\le t\}.
\]
Then, the superposition principle associated to the frozen Fokker--Planck in \Cref{eq:FP-frozen} combined with the use of \cite[Corollary 1.6]{Lacker-Shkolnikov-Zhang_2020} (see also \cite[Theorem 1.5]{Lacker-Shkolnikov-Zhang_2020} combined with \cite[Theorem 1.3]{Lacker-Shkolnikov-Zhang_2020}), the law of the random flow satisfies
\[
\Lc(\mub_t)=\Lc\bigl(\Lc(S_t,U\mid \Gc_t)\bigr),
\]
where $(S,U)$ solves : $S_0=\xi$, $(S_0,U) \perp W \perp (\sigma_\circ\,W^\circ, \mu)$ and
\begin{align*}
    \mathrm{d}S_t
    =
    \int_A
    b\bigl(
        t,
        S_t,
        \overline p_{\mub_t}(S_t,U),
        \Rr_{\mub_t}(U),
        a
    \bigr)\,
    \widehat\Gamma(t,S_t,U,\mub_t)(\mathrm{d}a)\,\mathrm{d}t
    +\sigma(t,S_t)\,\mathrm{d}W_t
    +\sigma_\circ\,\mathrm{d}W_t^\circ,
\end{align*}
with
\[
\overline p_{\mub_t}(x,u)
:=
\int_0^1 \Gr(u,v)\,p_{\mub_t}(x,v)\,\mathrm{d}v.
\]
Thus, the law of the random flow \(\mu_t\) coincides with the law of the conditional distribution of the controlled state.

\medskip

We now verify optimality.
Let
\[
\Lambda^\beta:[0,T]\times\R^d\times[0,1] \times\Pc(\R^d\times[0,1])\to\Pc(A)
\]
be an arbitrary admissible feedback kernel, that is, {\color{black}$\Lambda^\beta\in\M$}.
Let $(Y,U)$ be the corresponding state process solving
\begin{align*}
    \mathrm{d}Y_t
    =
    \int_A
    b\bigl(
        t,
        Y_t,
        \overline p_{\mub_t}(Y_t,U),
        \Rr_{\mub_t}(U),
        a
    \bigr)\,
    \Lambda^\beta(t,Y_t,U,\mub_t)(\mathrm{d}a)\,\mathrm{d}t
    +\sigma(t,Y_t)\,\mathrm{d}W_t
    +\sigma_\circ\,\mathrm{d}W_t^\circ.
\end{align*}

For each $\omega\in\Omega$, define the translated conditional law
\begin{align*}
    \ell_t(\omega)
    :=
    \Lc\bigl(Y_t-\sigma_\circ W_t^\circ\,, U\big|\,\Gc_t\bigr)(\omega) = \Lc\bigl(Y_t-\sigma_\circ W_t^\circ\,, U\big|\,\Gc_T\bigr)(\omega),
\end{align*}
and define the associated frozen--path control by
\begin{align*}
    \Lambda^{\sigma_\circ  W^\circ(\omega)}(t,x,u)
    :=
    \Lambda^\beta\bigl(t,x+\sigma_\circ W_t^\circ(\omega),u,\mub_t(\omega)\bigr).
\end{align*}
Then, for almost every $\omega$, the pair
\[
\bigl(\ell(\omega),\,\Lambda^{\sigma_\circ W^\circ(\omega)}\bigr)
\]
is an admissible competitor for the frozen problem associated with the path $c=\sigma_\circ W^\circ(\omega)$.
Therefore, by construction of the frozen--path solution $(\overline m^{\sigma_\circ W^\circ(\omega)},\Gamma^{\sigma_\circ W^\circ(\omega)})$, we have
\begin{align*}
    \Phi_{\sigma_\circ W^\circ(\omega),\,\overline m^{\sigma_\circ W^\circ(\omega)}}
    \bigl(
        \ell(\omega),\Lambda^{\sigma_\circ W^\circ(\omega)}
    \bigr)
    \le
    \Phi_{\sigma_\circ W^\circ(\omega),\,\overline m^{\sigma_\circ W^\circ(\omega)}}
    \bigl(
        \overline m^{\sigma_\circ W^\circ(\omega)},\Gamma^{\sigma_\circ W^\circ(\omega)}
    \bigr)
\end{align*}
for $\P$-a.e.\ $\omega$.
Integrating over $\omega$, we obtain
\begin{align*}
    J_{\mub}(\Lambda^\beta)
    =
    \int_\Omega
    \Phi_{\sigma_\circ W^\circ(\omega),\,\overline m^{\sigma_\circ W^\circ(\omega)}}
    \bigl(
        \ell(\omega),\Lambda^{\sigma_\circ W^\circ(\omega)}
    \bigr)\,
    \P(\mathrm{d}\omega),
\end{align*}
and
\begin{align*}
    J_{\mub}(\widehat\Gamma)
    =
    \int_\Omega
    \Phi_{\sigma_\circ W^\circ(\omega),\,\overline m^{\sigma_\circ W^\circ(\omega)}}
    \bigl(
        \overline m^{\sigma_\circ W^\circ(\omega)},\Gamma^{\sigma_\circ W^\circ(\omega)}
    \bigr)\,
    \P(\mathrm{d}\omega).
\end{align*}
Hence
\[
J_{\mub}(\Lambda^\beta)\le J_{\mub}(\widehat\Gamma),
\qquad \forall \Lambda^\beta\in\widehat{\M}.
\]

This proves that the random flow $\mub$ satisfies the optimality condition. Since we already identified that the law of $\mub_t$ is the law of the conditional law of the state given the common--noise filtration, the consistency condition also holds.
We conclude that $\mub$ is a relaxed mean field game solution in the presence of common noise.

\medskip
Let us turn to the existence of strict MFG game. By \Cref{assum:conv_cond}, there exists a Borel measurable map $\widetilde{\alpha}: [0,T] \x \R^d \x \R \x \Pc(\Er \x \R^d) \x  \Pc(A) \to A$ such that 
\begin{align*}
     b\left(t,x,p,r,\widetilde{\alpha}(t,x,p,r,\pi) \right)
     =
     \int_A b(t,x,p,r,a) \pi(\mathrm{d}a)\quad\mbox{and}\quad \int_A L(t,x,p,r,a) \pi(\mathrm{d}a) \le L(t,x,p,r,\widetilde{\alpha}(t,x,p,r,\pi)).
\end{align*}

With the control $\widehat{\Gamma}$ of \eqref{eq:mfg_control}, we set 
\begin{align*}
    \overline{\alpha}(t,x,u,\overline{m}):=\widetilde{\alpha}\left(t,x,\overline{p}_{\overline{m}}(x,u),\Rr_{\overline{m}}(u),\widehat{\Gamma}(t,x,u,\overline{m}) \right)
\end{align*}
and the couple $(X,U)$,
\begin{align*}
    \mathrm{d}X_t
    =
    b\bigl(
        t,
        X_t,
        \overline p_{\mub_t}(X_t,U),
        \Rr_{\mub_t}(U),
        \overline{\alpha}(t,X_t,U,\mub_t)
    \bigr)\,
    \mathrm{d}t
    +\sigma(t,X_t)\,\mathrm{d}W_t
    +\sigma_\circ\,\mathrm{d}W_t^\circ.
\end{align*}
Again by \cite[Corollary 1.6]{Lacker-Shkolnikov-Zhang_2020}, we have $\Lc( \Lc(X_t,U \mid \Gc_t)) =\Lc(\mub_t)$ for each $t \in [0,T]$, and $J_{\mub}(\widehat{\Gamma}) \le J_{\mu}(\Lambda^{\overline{\alpha}})$ with $\Lambda^{\overline{\alpha}}(t,x,u,\overline{m})=\delta_{\overline{\alpha}(t,x,u,\overline{m})}$. This is enough to deduce that $\mub$ is a strict MFG solution associated to $\overline{\alpha}$.

}

\subsection{Uniqueness of MFG solution (proof of {\rm \Cref{prop:uniqueness}})}

Let $(\mub^1,\Lambda^1)$ and $(\mub^2,\Lambda^2)$ be two mean field game solutions. We show that
\[
\mub^1=\mub^2
\qquad\text{a.e.}
\]
Throughout this subsection, we work under \Cref{assum:uniqueness_cond}.

\medskip

For each $i\in\{1,2\}$, the consistency property of the MFG solution yields
\[
\mub^i_t=\Lc(X^i_t,U\mid \Gc^i_t),
\qquad
\Gc^i_t:=\sigma\big\{\mub^i_s,\sigma_\circ W^\circ_s:\ s\in[0,t]\big\},
\]
where the state process $X^i$ solves
\begin{align*}
    \mathrm{d}X^i_t
    =
    \int_A b(t,X^i_t,a)\,
    \Lambda^i(t,X^i_t,U,\mub^i_t)(\mathrm{d}a)\,\mathrm{d}t
    +\sigma(t,X^i_t)\,\mathrm{d}W_t
    +\sigma_\circ\,\mathrm{d}W^\circ_t.
\end{align*}

\medskip

We now evaluate the control $\Lambda^2$ against the reference flow $\mub^1$. To this end, we define
\begin{align*}
    \overline{J}_{\mub^1}(\Lambda^2)
    :=
    \E\Bigg[
        g\big(X^2_T,\Rr_{\mub^1_T}(U)\big)
        +
        \int_0^T\int_A
        \overline{L}(t,X^2_t,a)\,
        \Lambda^2(t,X^2_t,U,\mub^2_t)(\mathrm{d}a)\,\mathrm{d}t
    \Bigg]
    +
    \E\Bigg[
        \int_0^T
        \underline{L}\Big(
            t,X^2_t,
            \overline{p}_{\mub^1_t}(X^2_t,U),
            \Rr_{\mub^1_t}(U)
        \Big)\,\mathrm{d}t
    \Bigg].
\end{align*}
Similarly, one defines $\overline{J}_{\mub^2}(\Lambda^1)$ by exchanging the roles of $(\mub^1,\Lambda^1)$ and $(\mub^2,\Lambda^2)$.

\medskip

One may use the admissible control $\Lambda^2$ as a competitor in the optimization problem associated with the flow $\mub^1$. This yields
\[
\overline{J}_{\mub^1}(\Lambda^2)
\le
\sup_{\beta} J_{\mub^1}(\Lambda^\beta)
=
J_{\mub^1}(\Lambda^1).
\]
Likewise,
\[
\overline{J}_{\mub^2}(\Lambda^1)
\le
\sup_{\beta} J_{\mub^2}(\Lambda^\beta)
=
J_{\mub^2}(\Lambda^2).
\]
Summing the two inequalities, we obtain
\begin{align*}
    0
    &\le
    J_{\mub^1}(\Lambda^1)-\overline{J}_{\mub^1}(\Lambda^2)
    +
    J_{\mub^2}(\Lambda^2)-\overline{J}_{\mub^2}(\Lambda^1).
\end{align*}

\medskip

We now expand the right--hand side. Since the terms involving $\overline{L}$ cancel pairwise, we get
\begin{align*}
    0
    &\le
    \E\Big[
        g\big(X^1_T,\Rr_{\mub^1_T}(U)\big)
        -
        g\big(X^2_T,\Rr_{\mub^1_T}(U)\big)
        +
        g\big(X^2_T,\Rr_{\mub^2_T}(U)\big)
        -
        g\big(X^1_T,\Rr_{\mub^2_T}(U)\big)
    \Big]
    \\
    &\quad+
    \E\Bigg[
        \int_0^T
        \underline{L}\Big(
            t,X^1_t,\overline{p}_{\mub^1_t}(X^1_t,U),\Rr_{\mub^1_t}(U)
        \Big)
        -
        \underline{L}\Big(
            t,X^2_t,\overline{p}_{\mub^1_t}(X^2_t,U),\Rr_{\mub^1_t}(U)
        \Big)\,\mathrm{d}t
    \Bigg]
    \\
    &\quad+
    \E\Bigg[
        \int_0^T
        \underline{L}\Big(
            t,X^2_t,\overline{p}_{\mub^2_t}(X^2_t,U),\Rr_{\mub^2_t}(U)
        \Big)
        -
        \underline{L}\Big(
            t,X^1_t,\overline{p}_{\mub^2_t}(X^1_t,U),\Rr_{\mub^2_t}(U)
        \Big)\,\mathrm{d}t
    \Bigg].
\end{align*}

\medskip

Next, we use the conditional independence structure of the MFG solution. Since, by construction,
\[
(\mub^1,\mub^2)\perp (X_0,U)\perp (W,W^\circ),
\]
we may enlarge the conditioning and write, for each $i\in\{1,2\}$,
\[
\mub^i_t=\Lc(X^i_t,U\mid \Gch_t),
\qquad
\Gch_t:=\sigma\big\{\mub^1_s,\mub^2_s,\sigma_\circ W^\circ_s:\ s\in[0,t]\big\}.
\]
Therefore, conditioning with respect to $\Gch_t$ and using the definition of the conditional laws, the previous inequality becomes
\begin{align*}
    0
    &\le
    \E\Bigg[
        \int_{\R^d\times[0,1]}
        \Big(
            g\big(x,\Rr_{\mub^1_T}(u)\big)
            -
            g\big(x,\Rr_{\mub^2_T}(u)\big)
        \Big)
        \Big(
            \mub^1_T-\mub^2_T
        \Big)(\mathrm{d}x,\mathrm{d}u)
    \Bigg]
    \\
    &\quad+
    \E\Bigg[
        \int_0^T
        \int_{\R^d\times[0,1]}
        \Big(
            \underline{L}\big(
                s,x,\overline{p}_{\mub^1_s}(x,u),\Rr_{\mub^1_s}(u)
            \big)
            -
            \underline{L}\big(
                s,x,\overline{p}_{\mub^2_s}(x,u),\Rr_{\mub^2_s}(u)
            \big)
        \Big)
        \Big(
            \mub^1_s-\mub^2_s
        \Big)(\mathrm{d}x,\mathrm{d}u)\,\mathrm{d}s
    \Bigg].
\end{align*}

\medskip
At this stage, the right--hand side is exactly the monotonicity expression associated with the pair $(g,\underline L)$ evaluated at the two flows $\mub^1$ and $\mub^2$.
If the optimization admits a unique solution and $\mub^1 \neq \mub^2$ on a set of positive probability, the previous non--negative inequality is a positive inequality. However, by \Cref{assum:uniqueness_cond}, this previous inequality is a non--positivity. This leads to a contradiction. Therefore, we deduce that  $\mub^1 = \mub^2$. {\color{black} By \cite[Theorem 1.5]{kurtz2014weak}, we can deduce that the unique solution $\mub$ is adapted to the common noise filtration $(\{\sigma_\circ\,W^\circ_s:\quad s \in [0,t]\})_{t \in [0,T]}$. This is enough to deduce the second part of the proposition}.

\subsection{The {\it{n}}--player formulation}

This section proves the asymptotic results linking the finite--player game to the limiting MFG. We first establish compactness and identify the limit of empirical flows associated with arbitrary strategy profiles. A key step is to show that the moderate interaction term converges to the graphon--weighted density appearing in the limiting equation.

We then analyze one--player deviations. This provides the stability estimate needed to pass the Nash equilibrium property to the limit. Combining these ingredients, we prove that limits of approximate Nash equilibria are relaxed MFG solutions. Finally, we prove the converse direction by constructing Markovian approximate Nash equilibria from a given relaxed MFG equilibrium, thereby completing the finite--player/mean-field correspondence.

\subsubsection{Technical results}

{\color{black}
We recall that the framework consider here has been introduced in \Cref{sec_n_player}.}
\begin{proposition} \label{prop:weak_poc}
Let us assume that $(\varepsilon_n)_{n\ge1}$ is satisfying
\[
\varepsilon_n\to0,
\qquad
\frac{1}{n\varepsilon^d_n}\to0.
\]
Then the sequence $(\Lc(\mub^n))_{n\ge1}$ is relatively compact for the weak topology.
Moreover, along any convergent subsequence $(\Lc(\mub^{n_k}))_{k\ge1}$, there exist
$\Lambda\in\M$ and an associated pair $(X,U,\mub)$ verifying {\rm\Cref{eq:mc_kean_representative}} such that, for each $t\in[0,T]$,
\[
\Lc(\mub^{n_k}_t)\xrightarrow[k\to\infty]{}\Lc(\mub_t)=\Lc( \Lc(X_t,U \mid \Gc_t)),
\]
and for any bounded continuous map $F$,
\begin{align*}
&\lim_{k\to\infty}\frac{1}{n_k}\sum_{i=1}^{n_k}\E\Bigg[\int_0^T
F\Big(t,X^{i,n_k}_t,\widehat{V}^n_{i,t}(X^{i,n_k}_t),\Rr^{n_k}_{i,t}, \alpha^{i,n}(t,\Xbb^n)\Big)\,\mathrm{d}t\Bigg]
\\
&=
\E\Bigg[\int_{[0,T] \x A} F\big(t,X_t,\overline{p}_{\mub_t}(X_t,U),\Rr_{\mub_t}(U), a\big)\,\Lambda(t,X_t,U,\mub_t)(\mathrm{d}a)\,\mathrm{d}t\Bigg].
\end{align*}
\end{proposition}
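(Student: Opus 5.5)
The plan is to lift the problem to a space of relaxed controls and characterise limit points through a martingale problem conditional on the common noise. The new ingredient compared with standard mean field arguments is the identification of the moderate term $\widehat V^n_{i,t}$, which I would control with Krylov-type estimates on pairs of particles.

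\textbf{Compactness.} With $\alpha^{i,n}$ the $i$-th component of the given profile $\alphab^n$, set $q^{i,n}(\mathrm{d}t,\mathrm{d}a):=\delta_{\alpha^{i,n}(t,\Xbb^n)}(\mathrm{d}a)\,\mathrm{d}t\in\M(A)$. Introduce the extended empirical measure
\[
Q^n:=\frac1n\sum_{i=1}^n\delta_{(X^{i,n},\,u^n_i,\,q^{i,n})}\in\Pc\big(\Cc^d\times[0,1]\times\M(A)\big),
\]
and consider the joint law $\Lc(Q^n,\sigma_\circ W^\circ)$. Since $b$ is bounded, $\sigma$ is bounded, and $\Lc(\mub^n_0)$ converges, the intensities $\E[Q^n]$ are tight on $\Cc^d$ by the Kolmogorov criterion. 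The space $\M(A)$ is compact because $A$ is compact, so $(\Lc(Q^n,\sigma_\circ W^\circ))_n$ is tight. Relative compactness of $\Lc(\mub^n)$ follows, since $\mub^n_t$ is a continuous image of $Q^n$.

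\textbf{Identification of the limit.} Take a limit point $(Q,\sigma_\circ W^\circ)$ along $(n_k)_{k\ge1}$. For $f\in C^\infty_c(\R^d\times[0,1])$, apply Itô's formula to $\frac1n\sum_i f(X^{i,n}_t-\sigma_\circ W^\circ_t,u^n_i)$. The idiosyncratic martingale part has quadratic variation $O(1/n)$ by independence of the $W^i$, so it vanishes in the limit. The remaining terms form a conditional Fokker--Planck equation for $\mub$ given $\Gc$. The drift term $\int \nabla f\cdot b\big(t,x,\widehat V^n_{i,t}(x),\Rr^n_{i,t},a\big)$ passes to the limit provided that, for a.e.\ $t$,
\begin{align*}
\lim_{k\to\infty}\E\Big[\int_0^T\frac1{n_k}\sum_{i=1}^{n_k}\Big(\big|\widehat V^{n_k}_{i,t}(X^{i,n_k}_t)-\overline p_{\mub^{n_k},\Gr}(t,X^{i,n_k}_t,u^{n_k}_i)\big|\wedge 1\Big)\mathrm{d}t\Big]=0,
\end{align*}
with $\Rr^n_{i,t}\to\Rr_{\mub_t}(u)$ as well. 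Here $\overline p_{\mub^n,\Gr}$ is a suitable comparison quantity built from the graphon $\Gr$. The environment term $\Rr^n_{i,t}$ is handled by $\Gr^n\to\Gr$ in cut norm, applied to $f\circ\Gr^n$ with bounded Lipschitz $f$ and tested against products of continuous functions in $(x,u)$. Once both terms are identified, I disintegrate $Q$ in the control variable to obtain a kernel $\Lambda(t,x,u,\mub_t)$. Averaging over the law of the control given $(t,x,u,\mub_t)$ works because $b$ enters linearly in the relaxed control. The superposition principle \cite[Corollary 1.6]{Lacker-Shkolnikov-Zhang_2020}, already used in the existence proof, then produces $(X,U)$ satisfying \eqref{eq:mc_kean_representative} with $\Lc(\mub_t)=\Lc(\Lc(X_t,U\mid\Gc_t))$. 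The convergence of the averaged functionals of $F$ follows from the same three convergences: weak convergence of $Q^{n_k}$, convergence of the local density term, and convergence of $\Rr^n_{i,t}$.

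\textbf{Main obstacle: the moderate term.} I would use a two-scale argument. Fix $\delta>0$ and compare $V_{n}$ with $V_\delta:=\delta^{-d}V(\cdot/\delta)$.
\begin{enumerate}
\item At the fixed scale $\delta$, the quantity $\frac1n\sum_j\xi^n_{ij}V_\delta(x-X^{j,n}_t)$ is a continuous functional of $Q^n$ and $\Gr^n$. Its limit is $\int\Gr(u,v)\,(V_\delta*\mub_t)(x,v)\,\mathrm{d}v$, after approximating $\Gr$ by continuous or step kernels and using the cut norm.
\item As $\delta\to0$, this converges to $\overline p_{\mub_t}(x,u)$ in $L^1$, because $\mub_t$ has a density, integrable in $L^q$ uniformly, by the Krylov estimate or \Cref{lemma:estimations}.
\item The crux is the uniform estimate
\[
\lim_{\delta\to0}\limsup_{n}\E\int_0^T\frac1n\sum_i\Big|\frac1n\sum_j\xi^n_{ij}\big(V_n-V_\delta\big)(X^{i,n}_t-X^{j,n}_t)\Big|\,\mathrm{d}t=0.
\]
I would split the inner sum into the diagonal term $j=i$ and the off-diagonal terms $j\neq i$. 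The diagonal term is bounded by $\|V\|_\infty/(n\varepsilon_n^d)\to0$, which is exactly where the scaling assumption enters. For $j\ne i$, the pair $(X^{i,n},X^{j,n})$ is a $2d$-dimensional process with bounded drift. Its diffusion is non-degenerate on the difference coordinate $X^{i,n}-X^{j,n}$, since the common noise cancels there. Krylov's estimate then gives
\[
\E\int_0^T h\big(t,X^{i,n}_t-X^{j,n}_t\big)\,\mathrm{d}t\le C\|h\|_{L^{q}([0,T]\times\R^d)},\qquad q>d+1,
\]
uniformly in $n,i,j$. So the time-integrated law of the difference has an $L^{q'}$ density $\rho_{ij}$ with uniform bounds, and the off-diagonal error reduces to $\int|(V_n-V_\delta)*\rho_{ij}|$-type quantities at the origin. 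I expect to need an $L^{q'}$-equicontinuity of the $\rho_{ij}$, coming from Hölder regularity of densities of non-degenerate diffusions, to make this small uniformly.
\end{enumerate}
If the Krylov route only yields $L^{q'}$ bounds without equicontinuity, the first fallback is to test against $|V_n-V_\delta|^2$ and use an $L^2$ energy estimate on $V_n*\mub^n_t$ derived from the mollified Itô expansion, as in Oelschläger's approach \cite{oelschlager1985law}.
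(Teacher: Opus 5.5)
Your compactness argument and the identification of the limit (Itô's formula for $\langle f,\mub^n_t\rangle$, vanishing of the idiosyncratic martingales, disintegration of the control, conditional superposition principle) are in line with the paper's Steps 0, 7 and 8. Everything therefore hinges on your crux estimate for the moderate term, and that step has a genuine gap.

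In that estimate the absolute value sits \emph{outside} the average over $j$, so the quantity is not determined by the pair laws of $(X^{i,n}_t,X^{j,n}_t)$. The pair densities $\rho_{ij}$ only control the signed mean $\E\big[\frac1n\sum_{j\neq i}\xi^n_{ij}(V_n-V_\delta)(X^{i,n}_t-X^{j,n}_t)\big]$, which is all that your convolution-at-the-origin quantities measure. If you move the absolute value inside by the triangle inequality, you get $\int|V_n-V_\delta|\,\rho_{ij}$, which tends to twice the diagonal value of the pair density, not to $0$.

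A quick check shows the reduction cannot be correct. Take $V(0)=0$: your diagonal term vanishes identically, and your argument never uses $n\varepsilon_n^d\to\infty$. Yet for i.i.d.\ Brownian particles ($b=0$, $\sigma=\Ir_d$, $\xi^n_{ij}=1$) with compactly supported $V$ and $n\varepsilon_n^d\to0$, one has $\widehat V^n_{i,t}(X^{i,n}_t)=0$ with probability tending to one, while its mean tends to $\int p_t^2$. So there is no convergence to $\overline p_{\mub_t}$ in probability.

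Krylov's estimate is also the wrong tool here. Since $\|V_n\|_{L^q}\to\infty$ for every $q>1$, it does not even bound $\E\int_0^T V_n(X^{i,n}_t-X^{j,n}_t)\,\mathrm{d}t$ uniformly in $n$, and $L^{q'}$-equicontinuity of $\rho_{ij}$ does not help. What you need is locally uniform boundedness and continuity of the densities near the diagonal.

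The missing ingredient is a fluctuation (second-moment) estimate. With $W:=V_n-V_\delta$,
\[
\E\Big|\frac1n\sum_{j=1}^n\xi^n_{ij}\,W\big(X^{i,n}_t-X^{j,n}_t\big)\Big|^2
=
\frac1{n^2}\sum_{j,j'=1}^n\xi^n_{ij}\xi^n_{ij'}\,
\E\Big[W\big(X^{i,n}_t-X^{j,n}_t\big)\,W\big(X^{i,n}_t-X^{j',n}_t\big)\Big].
\]
The terms with a repeated index are $O\big(1/(n\varepsilon_n^d)\big)$. This, rather than the self-interaction $j=i$ alone, is where the moderate scaling is genuinely needed. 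The remaining terms involve the three-point densities of $(X^{i,n}_t,X^{j,n}_t,X^{j',n}_t)$. They vanish (after localisation, letting $n\to\infty$ and then $\delta\to0$) provided these densities are locally Hölder continuous uniformly in $n$, which is what \Cref{lemma:estimations}-type parabolic estimates give.

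This is exactly the paper's mechanism, carried out at all orders and without the intermediate scale $\delta$:
\begin{enumerate}
\item it expands $|\widehat V^n_{i,t}(X^{i,n}_t)|^k$ and discards repeated indices using $1/(n\varepsilon_n^d)\to0$;
\item it passes to the limit in the averaged $(k+1)$-point densities through uniform Hölder bounds;
\item it identifies these limits via $\Gr^n\to\Gr$ and the limit flow;
\item it pins the limiting density coordinate to $\delta_{\overline p_{\mub_t}(y,v)}$ by moment matching (\Cref{lemma:proba_equality}).
\end{enumerate}
With the second-moment computation added, your two-scale scheme would become a workable variant, and only $k\le2$ would be needed. As written, however, its central step is not established. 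Your Oelschläger-type fallback would not rescue it under the stated hypothesis, since that energy estimate typically produces an Itô correction of order $1/(n\varepsilon_n^{d+2})$ and so needs a stronger scaling than $n\varepsilon_n^d\to\infty$.
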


\begin{proof}
\textbf{Step 0 (relative compactness).}
By the boundedness of $(b,\sigma)$, the tightness of $(\Lc(\mub^n))_{n\ge1}$ in $\Wc_0$ follows from similar arguments as in \cite[Theorem A.2.]{djete2019general}.
We now prove the second part. Fix a sub--sequence (still denoted $n$) such that $\Lc(\mub^n)\Rightarrow \Lc(\mub)$,
and we identify the limit.

\medskip
\textbf{Step 1 (a priori control of the local average).}
Fix an integer $k\ge1$ with $k<n$ (the number $n$ will go to infinity) and a smooth $h:\R^d \to \R$ with compact support. Expanding the $k$--th power yields
\begin{align}\label{eq:develop_better}
    \E\Big[\big| \Vh^n_{i,t}(X^{i,n}_t)\big|^kh(X^{i,n}_t)\Big]
    &=
    \frac{1}{n^k}\frac{1}{\varepsilon_n^{dk}}\,
    \E\Bigg[\Big|\sum_{j=1}^n \xi^n_{i,j}V\Big(\frac{X^{i,n}_t-X^{j,n}_t}{\varepsilon_n}\Big)\Big|^kh(X^{i,n}_t)\Bigg]\nonumber\\
    &=
    \frac{1}{n^k}\frac{1}{\varepsilon_n^{dk}}\,
    \sum_{j_1=1,\dots,j_k=1}^n
    \E\Bigg[\prod_{\ell=1}^k
    \xi^n_{i,j_{\ell}}V\Big(\frac{X^{i,n}_t-X^{j_\ell,n}_t}{\varepsilon_n}\Big)h(X^{i,n}_t)\Bigg].
\end{align}

For indices $j_1,\dots,j_k$ all distinct and different from $i$, the vector
$(X^{i,n}_t,X^{j_1,n}_t,\dots,X^{j_k,n}_t)$ solves a non--degenerate SDE and therefore admits a density
$p^{i,j_1,\dots,j_k}(t,\cdot)$ w.r.t. Lebesgue measure (see for instance \cite{krylov1980controlled}, the presence or not of a common noise is not relevant as long as $\sigma$ is non--degenerate). A change of variables gives
\begin{align}\label{eq:change_var}
    &\frac{1}{\varepsilon_n^{dk}}\E\Bigg[\prod_{\ell=1}^k
    \xi^n_{i,j_{\ell}}V\Big(\frac{X^{i,n}_t-X^{j_\ell,n}_t}{\varepsilon_n}\Big) h(X^{i,n}_t)\Bigg]\nonumber\\
    &\qquad=
    \int_{\R^{d(k+1)}} \Big(\prod_{\ell=1}^k \xi^n_{i,j_{\ell}}V(x_\ell)\Big)h(y)\,
    p^{i,j_1,\dots,j_k}\big(t,y,y+\varepsilon_n x_1,\dots,y+\varepsilon_n x_k\big)\,
    \mathrm{d}y\,\mathrm{d}x_1\cdots\mathrm{d}x_k.
\end{align}

\medskip
\textbf{Step 2 (negligibility of repeated indices).}
For each $i$, set
\[
\Jc_i:=\Big\{(j_1,\dots,j_k)\in\{1,\dots,n\}^k:\ j_1,\dots,j_k \text{ are pairwise distinct and } j_\ell\neq i\Big\}.
\]
We claim that
\begin{align}\label{eq:cong_to_zero_better}
\lim_{n\to\infty}\frac1n\sum_{i=1}^n
\frac{1}{n^k}\frac{1}{\varepsilon_n^{dk}} 
\sum_{(j_1,\dots,j_k)\notin\Jc_i}
\int_0^T
\E\Bigg[\prod_{\ell=1}^k
\xi^n_{i,j_{\ell}}V\Big(\frac{X^{i,n}_t-X^{j_\ell,n}_t}{\varepsilon_n}\Big)h(X^{i,n}_t)\Bigg]\mathrm{d}t
=0.
\end{align}

For clarity, we show it for $k=2$.
The contribution of diagonal terms $(j_1=j_2\neq i)$ satisfies
\begin{align*}
\frac{1}{n^2}\frac{1}{\varepsilon_n^{2d}}\sum_{j_1\neq i}^n
\E\Big[|\xi^n_{i,j_{1}}|^2V\Big(\frac{X^{i,n}_t-X^{j_1,n}_t}{\varepsilon_n}\Big)^2 h(X^{i,n}_t)\Big]
&=
\frac{1}{n}\frac{1}{\varepsilon^d_n}\int_{\R^{2d}} V(x)^2\,h(y)
\Big(\frac{1}{n}\sum_{j_1\neq i} |\xi^n_{i,j_{1}}|^2 p^{i,j_1}(t,y,y+\varepsilon_n x)\Big)\,\mathrm{d}y\,\mathrm{d}x.
\end{align*}
Since $j_1\neq i$, $p^{i,j_1}$ is the density of a $2d$--dimensional uniformly non--degenerate SDE with Lipschitz diffusive
coefficients. Hence for any compact $K\subset(0,T]\times\R^{2d}$, there exist $C>0$ and $\gamma\in(0,1)$ (depending only on the
dimension and boundedness of the coefficients, see for instance \Cref{lemma:estimations}) such that
\begin{align}\label{eq:density_analysis_better}
\sup_{(s,y_1,y_2)\in K}|p^{i,j_1}(s,y_1,y_2)|
+
\sup_{\substack{(t,x_1,x_2),(t',x_1',x_2')\in K}}
\frac{|p^{i,j_1}(t,x_1,x_2)-p^{i,j_1}(t',x_1',x_2')|}
{|x_1-x_1'|^\gamma+|x_2-x_2'|^\gamma+|t-t'|^{\gamma/2}}
<\infty.
\end{align}
Consequently, the sequence
\[
(t,y_1,y_2)\longmapsto \frac{1}{n^2}\sum_{i=1}^n\sum_{j_1\neq i} |\xi^n_{i,j_{1}}|^2 p^{i,j_1}(t,y_1,y_2)
\]
is relatively compact for the locally uniform topology; let $p$ denote the limit of a convergent subsequence (not relabeled). Since $h$ has compact support, we have
$\int_{[0,T]\times\R} h(y)p(t,y,y)\,\mathrm{d}y\,\mathrm{d}t<\infty$. Moreover, by assumption on $V$ i.e. a bounded probability density, $\int_{\R}V(x)^2\,\mathrm{d}x<\infty$.
Passing to the limit yields
\[
\lim_{n\to\infty}\int_{[0,T]\times\R^{2d}} V(x)^2\,h(y)\,\frac{1}{n^2}\sum_{i=1}^n\sum_{j_1\neq i} |\xi^n_{i,j_{1}}|^2
p^{i,j_1}(t,y,y+\varepsilon_n x)\,\mathrm{d}y\,\mathrm{d}x\,\mathrm{d}t
=
\int_{[0,T]\times\R^{2d}} V(x)^2\,h(y)\,p(t,y,y)\,\mathrm{d}y\,\mathrm{d}x\,\mathrm{d}t.
\]
Therefore,
\begin{align*}
&\lim_{n\to\infty}\frac1n\sum_{i=1}^n\int_0^T
\frac{1}{n^2}\frac{1}{\varepsilon_n^{2d}}\sum_{j_1\neq i}
\E\Big[|\xi^n_{i,j_1}|^2V\Big(\frac{X^{i,n}_t-X^{j_1,n}_t}{\varepsilon_n}\Big)^2h(X^{i,n}_t)\Big]\mathrm{d}t
\\
&=
\lim_{n\to\infty}\frac{1}{n}\frac{1}{\varepsilon_n^d}
\int_{[0,T]\times\R^{2d}} V(x)^2\,h(y)\,p(t,y,y)\,\mathrm{d}y\,\mathrm{d}x\,\mathrm{d}t
=0,
\end{align*}
since $\frac{1}{n\varepsilon_n^d}\to0$. This proves \eqref{eq:cong_to_zero_better} for $k=2$; the general case is analogous.

\medskip
\textbf{Step 3 (limit for the non--diagonal contribution).}
For $(j_1,\dots,j_k)\in\Jc_i$, the same regularity estimate as in \eqref{eq:density_analysis_better} implies that the family
\[
(t,y,x_1,\dots,x_k)\longmapsto
\frac{1}{n}\sum_{i=1}^n\frac{1}{n^k}\sum_{(j_1,\dots,j_k)\in\Jc_i}
\xi^n_{i,j_1} \x \cdots \x \xi^n_{i,j_k} p^{i,j_1,\dots,j_k}(t,y,x_1,\dots,x_k)
\]
is relatively compact in the locally uniform topology. Let $p^k$ denote the limit of a convergent subsequence (not relabeled).
Combining \eqref{eq:develop_better}, \eqref{eq:cong_to_zero_better} and \eqref{eq:change_var} yields
\begin{align}\label{eq:limit_density_better}
&\lim_{n\to\infty}\frac1n\sum_{i=1}^n\int_0^T
\E\Big[\big|\Vh^n_{i,t}(X^{i,n}_t)\big|^kh(X^{i,n}_t)\Big]\mathrm{d}t \nonumber
\\
&=
\lim_{n\to\infty}\int_{[0,T]\times\R^{d(k+1)}}
\Big(\prod_{\ell=1}^k V(x_\ell)\Big)\,h(y)\,
p^k\big(t,y,y+\varepsilon_n x_1,\dots,y+\varepsilon_n x_k\big)\,
\mathrm{d}y\,\mathrm{d}x_1\cdots\mathrm{d}x_k\,\mathrm{d}t \nonumber\\
&=
\int_{[0,T]\times\R^{d(k+1)}}
\Big(\prod_{\ell=1}^k V(x_\ell)\Big)\,h(y)\,p^k(t,y,y,\dots,y)\,
\mathrm{d}y\,\mathrm{d}x_1\cdots\mathrm{d}x_k\,\mathrm{d}t \nonumber\\
&=
\int_{[0,T]\times\R^d} h(y)\,p^k(t,y,y,\dots,y)\,\mathrm{d}y\,\mathrm{d}t.
\end{align}

\medskip
\textbf{Step 4 (identification of $p^k$).}
Let $g:[0,T]\times\R^d\to\R$ and $f^1,\dots,f^k:[0,T]\times\R^d\to\R$ be smooth maps with compact support. By the very definition of the densities combined with the convergence of $\mu^n$, of $\Gr^n$ to $\Gr$ and \cite[Proposition A.6]{djete2025nonexchangeablemeanfieldcontrol},
\begin{align*}
&\int_{[0,T]\times\R^{d(k+1)}} g(t,y)\prod_{\ell=1}^k f^\ell(t,x_\ell)\,
p^k(t,y,x_1,\dots,x_k)\,\mathrm{d}y\,\mathrm{d}x_1\cdots\mathrm{d}x_k\,\mathrm{d}t \\
&\qquad=
\lim_{n\to\infty}\int_0^T
\E\Bigg[\frac1n\sum_{i=1}^n g(t,X^{i,n}_t)\prod_{\ell=1}^k\Big(\frac1n\sum_{j=1}^n \xi^n_{i,j} f^\ell(t,X^{j,n}_t)\Big)\Bigg]\mathrm{d}t \\
&\qquad=
\int_0^T \E\left[\int_{\R^d \x [0,1]} g(t,y) \prod_{\ell=1}^k \int_{\R^d \x [0,1]} \Gr(v,u)f^\ell(t,x)\mub_t(\mathrm{d}x,\mathrm{d}u) \mub_t(\mathrm{d}y,\mathrm{d}v)\right]\mathrm{d}t \\
&\qquad=
\int_0^T \E\Bigg[ \int_{\R^d \x [0,1]}\int_{(\R^d \x [0,1])^{k}} g(t,y)\prod_{\ell=1}^k \Gr(v,u_\ell) f^\ell(t,x_\ell)\,
\mub_t(\mathrm{d}y,\mathrm{d}v)\mub_t(\mathrm{d}x_1,\mathrm{d}u_1)\cdots\mub_t(\mathrm{d}x_k,\mathrm{d}u_k)\Bigg]\mathrm{d}t.
\end{align*}
By using similar techniques as in \cite[Proposition 9.1.]{10.1214/23-AAP1993}, the probability $\mub_t$ admits a density i.e. there exists a Borel map
$p:\Pc(\R^d \x [0,1])\times\R^d \x [0,1]\to\R_+$ such that $\mub_t(\mathrm{d}x,\mathrm{d}u)=p_{\mub_t}(x,u)\,\mathrm{d}x\, \mathrm{d}u$ and $\int_{\R} p_{\mub_t}(x,u)\,\mathrm{d}x=1$ for a.e. $u \in [0,1]$.
Therefore,
\begin{align*}
&\int_{[0,T]\times\R^{d(k+1)}} g(t,y)\prod_{\ell=1}^k f^\ell(t,x_\ell)\,
p^k(t,y,x_1,\dots,x_k)\,\mathrm{d}y\,\mathrm{d}x_1\cdots\mathrm{d}x_k\,\mathrm{d}t \\
&\qquad=
\int_0^T \E\Bigg[\int_{(\R^d \x [0,1])^{k+1}} g(t,y)\prod_{\ell=1}^k f^\ell(t,x_\ell)\,
\Gr(v,u_\ell)\,p_{\mub_t}(y,v)p_{\mub_t}(x_1,u_1)\cdots p_{\mub_t}(x_k,u_k)\,\mathrm{d}y\,\mathrm{d}x_1\mathrm{d}u_1\cdots\mathrm{d}x_k \mathrm{d}u_k\Bigg]\mathrm{d}t.
\end{align*}
Since this holds for all smooth maps $g,f^1,\dots,f^k$, we deduce that for a.e. $(t,y,x_1,\dots,x_k)$,
\[
p^k(t,y,x_1,\dots,x_k)=\E\left[\int_{[0,1]^{k+1}}\Gr(v,u_1) \x \cdots \x \Gr(v,u_k)p_{\mub_t}(y,v)p_{\mub_t}(x_1,u_1)\cdots p_{\mub_t}(x_k,u_k)\, \mathrm{d}v\, \mathrm{d}u_1 \cdots \mathrm{d}u_k\right].
\]
Plugging this into \eqref{eq:limit_density_better} yields
\[
\lim_{n\to\infty}\frac1n\sum_{i=1}^n\int_0^T
\E\Big[\big|\Vh^n_{i,t}(X^{i,n}_t)\big|^kh(X^{i,n}_t)\Big]\mathrm{d}t
=
\E\Bigg[\int_0^T\int_{\R^d \x [0,1]} \left|\int_0^1\Gr(v,u)p_{\mub_t}(y,u)\mathrm{d}u\right|^k\,h(y)\,\mub_t(\mathrm{d}y,\mathrm{d}v)\,\mathrm{d}t\Bigg].
\]
A diagonal extraction provides a single subsequence along which the above convergence holds for all integers $k\ge1$.

\medskip
\textbf{Step 5 (joint convergence with empirical test functions).}
Repeating the same argument with additional empirical factors (tightness of the corresponding multi--point densities,
identification of the limit, and diagonal extraction), we obtain that along a subsequence (not relabeled),
for all $k,q\ge1$, smooth maps $g:[0,T]\times\R^d \x [0,1]\to\R$ and $f^1,\dots,f^q$, with $\muh^{i,n}_t:=\frac{1}{n} \sum_{j=1}^n \delta_{(X^{j,n}_t,\, \xi^n_{ij},\,u^n_j)}$,
\begin{align}\label{eq:cong_polynom_better}
&\lim_{n\to\infty}\frac1n\sum_{i=1}^n\int_0^T
\E\Big[\big|\Vh^n_{i,t}(X^{i,n}_t)\big|^k\, g(t,X^{i,n}_t,u^n_i)\,\prod_{\ell=1}^q\langle f^\ell,\muh^{i,n}_t\rangle\Big]\mathrm{d}t \nonumber
\\
&=
\E\Bigg[\int_0^T\int_{\R^d \x [0,1]} \left|\int_0^1 \Gr(v,u)p_{\mub_t}(y,u)\mathrm{d}u\right|^k\,g(t,y,v)\,\prod_{\ell=1}^q \int_{\R^d \x [0,1]}f^\ell(x',\Gr(v,u'),u')\mub_t(\mathrm{d}x',\mathrm{d}u')\,\mub_t(\mathrm{d}y,\mathrm{d}v)\,\mathrm{d}t\Bigg].
\end{align}

\medskip
\textbf{Step 6 (identification of the limit probability measure associated to $\mu$).}
Define the random probability measure
\[
\overline{\Gamma}^{n}(\mathrm{d}y,\mathrm{d}v,\mathrm{d}e,\mathrm{d}m,\mathrm{d}a)\mathrm{d}t
:=
\frac{1}{n}\sum_{i=1}^n\delta_{(X^{i,n}_t,u^n_i,p^{i,n}_t,\hat \mu^{i,n}_t,\alpha^{i,n}(t,\Xbb^n))}
(\mathrm{d}y,\mathrm{d}v,\mathrm{d}e,\mathrm{d}m,\mathrm{d}a)\mathrm{d}t,
\qquad
p^{i,n}_t:=\Vh^n_{i,t}(X^{i,n}_t),
\]
and set
\[
\Pr^n:= \Lc(\mu^n,\overline{\Gamma}^n).
\]
The convergence \eqref{eq:cong_polynom_better} implies that 
\begin{align} \label{eq:growth_density}
   \sup_{n \ge 1} \frac1n\sum_{i=1}^n\int_0^T
\E\left[\big|p^{i,n}_t\big|^k\right]\mathrm{d}t < \infty,\qquad \mbox{for any }k \ge 1.
\end{align}

We can deduce that
$(\Pr^n)_{n\ge1}$ is relatively compact for the weak topology. Let $\Pr=\P\circ(\mu,\overline{\Gamma})^{-1}$ be the limit of a convergent subsequence. We are taking the same convergent sub--sequence for $(\Lc(\mub^n))_{n\ge1}$ and $(\Pr^n)_{n\ge1}$. In addition, because of \eqref{eq:growth_density}, we have the convergence in the following sense : for any continuous map $(t,x,u,e,m,a) \mapsto H(t,x,u,e,m,a)$ with polynomial growth in $(u,e)$, and bounded in $(t,x,m,a)$, we have
\begin{align*}
    \Lim_{n \to \infty}\frac{1}{n}\sum_{i=1}^n\E \left[\int_0^T H(t,X^{i,n}_t,u^n_i,p^{i,n}_t,\hat \mu^{i,n}_t,\alpha^{i,n}(t,\Xbb^n)) \,\mathrm{d}t \right] = \E \left[\int_0^T \langle H, \overline{\Gamma}_t \rangle \,\mathrm{d}t \right].
\end{align*}

\medskip
By \eqref{eq:cong_polynom_better}, for any integers \(k,q\ge1\), any smooth map
\(g\), and any smooth maps \(f^1,\dots,f^q\) as above, we have
\begin{align} \label{eq:eq_identification}
&\E\Bigg[\int_0^T \int_{\R^d \times [0,1]\times\R_+ \times \Pc( \Er \times \R^d \times [0,1])}
 |e|^k\,g(t,y,v)\,\prod_{\ell=1}^q\langle f^\ell,m\rangle\,
\overline{\Gamma}_t(\mathrm{d}y,\mathrm{d}v,\mathrm{d}e,\mathrm{d}m,A)\,\mathrm{d}t\Bigg] 
\\
&=
\E\Bigg[\int_0^T \int_{\R^d \times [0,1]}
\left|\int_0^1\Gr(v,u)p_{\mub_t}(y,u)\,\mathrm{d}u\right|^k
g(t,y,v)
\nonumber\\
&\hspace{4cm}\times
\prod_{\ell=1}^q
\int_{\R^d \times [0,1]}
f^\ell(x',\Gr(v,u'),u')\,\mub_t(\mathrm{d}x',\mathrm{d}u')\,
\mub_t(\mathrm{d}y,\mathrm{d}v)\,\mathrm{d}t
\Bigg]. \nonumber
\end{align}
The purpose of the next step is to identify the two additional coordinates carried by
\(\overline{\Gamma}\): the variable \(e\), which represents the limiting local density,
and the variable \(m\), which represents the graphon environment seen from the label \(v\).

We introduce the deterministic measure
\begin{align*}
    \Kr_t(\mathrm{d}y,\mathrm{d}v,\mathrm{d}e,\mathrm{d}m)\,\mathrm{d}t
    :=
    \E\big[
        \overline{\Gamma}_t(\mathrm{d}y,\mathrm{d}v,\mathrm{d}e,\mathrm{d}m,A)
    \big]\,\mathrm{d}t.
\end{align*}
Let
\[
\widehat{\Kr}(t,y,v,m)(\mathrm{d}e)
\]
be a regular conditional distribution of the \(e\)-coordinate given
\((t,y,v,m)\), so that
\begin{align}  \label{eq:charac_2}
    \Kr_t(\mathrm{d}y,\mathrm{d}v,\mathrm{d}e,\mathrm{d}m)
    =
    \widehat{\Kr}(t,y,v,m)(\mathrm{d}e)\,
    \Kr_t(\mathrm{d}y,\mathrm{d}v,\R,\mathrm{d}m).
\end{align}

We first identify the \((y,v,m)\)-marginal of \(\Kr\). Since
\eqref{eq:eq_identification} holds for arbitrary \(g\), arbitrary \(q\), and arbitrary
test functions \(f^1,\dots,f^q\), the class of products $g(t,y,v)\prod_{\ell=1}^q\langle f^\ell,m\rangle$
separates the relevant variables. Therefore,
\begin{align} \label{eq:charac_1}
    \Kr_t(\mathrm{d}y,\mathrm{d}v,\R_+,\mathrm{d}m)
    =
    \E\left[
        \delta_{\Rc(t,\mub_t,v)}(\mathrm{d}m)\,
        \mub_t(\mathrm{d}y,\mathrm{d}v)
    \right],
\end{align}
where
\[
\Rc(t,\mub_t,v)(\mathrm{d}x,\mathrm{d}r,\mathrm{d}u)
:=
\delta_{\Gr(v,u)}(\mathrm{d}r)\,\mub_t(\mathrm{d}x,\mathrm{d}u).
\]
In other words, under the limiting measure, the environmental coordinate \(m\)
is forced to be the graphon push--forward of \(\mub_t\) seen from the label \(v\).
We now identify the \(e\)-coordinate. Combining
\eqref{eq:eq_identification}, \eqref{eq:charac_1}, and the disintegration
\eqref{eq:charac_2}, we obtain that, for every \(k\ge1\),
\[
\int_{\R_+}|e|^k\,
\widehat{\Kr}\bigl(t,y,v,V(t,\mub_t,v)\bigr)(\mathrm{d}e)
=
\big|\overline p_{\mub_t}(y,v)\big|^k,\quad \overline p_{\mub_t}(y,v)
:=
\int_0^1\Gr(v,u)p_{\mub_t}(y,u)\,\mathrm{d}u
\]
for \(\mathrm{d}t\otimes\mathrm{d}\P\otimes\mub_t(\mathrm{d}y,\mathrm{d}v)\)-a.e.
\((t,\omega,y,v)\).
Thus the probability measure
\[
\widehat{\Kr}\bigl(t,y,v,\Rc(t,\mub_t,v)\bigr)(\mathrm{d}e)
\]
and the Dirac measure
\[
\delta_{\overline p_{\mub_t}(y,v)}(\mathrm{d}e)
\]
have the same moments of every order. Since the latter measure is compactly
supported i.e. it is a singleton, \Cref{lemma:proba_equality} implies that
\begin{align*}
    \widehat{\Kr}\bigl(t,y,v,\Rc(t,\mub_t,v)\bigr)(\mathrm{d}e)
    =
    \delta_{\overline p_{\mub_t}(y,v)}(\mathrm{d}e),
\end{align*}
for \(\mathrm{d}t\otimes\mathrm{d}\P\otimes\mub_t(\mathrm{d}y,\mathrm{d}v)\)-a.e.
\((t,\omega,y,v)\).
Consequently, the limiting measure \(\overline{\Gamma}\) is identified on the
\((y,v,e,m)\)-coordinates. Namely, for \(\mathrm{d}t\otimes\mathrm{d}\P\)-a.e.,
\begin{align}\label{eq:density_equality_better}
\E\big[
    \overline{\Gamma}_t(\mathrm{d}y,\mathrm{d}v,\mathrm{d}e,\mathrm{d}m,A)
\big]
=
\E\left[
    \delta_{\overline p_{\mub_t}(y,v)}(\mathrm{d}e)\,
    \delta_{\Rc(t,\mub_t,v)}(\mathrm{d}m)\,
    \mub_t(\mathrm{d}y,\mathrm{d}v)
\right].
\end{align}
This is precisely the desired identification: the limiting local--density coordinate
is \(\overline p_{\mub_t}(y,v)\), and the limiting environment coordinate is the
graphon push-forward \(\Rc(t,\mub_t,v)\).

\medskip
\textbf{Step 7 (martingale problem for $\mu$).}
Let $f:\R^d \x [0,1]\to\R$ be smooth. By It\^o's formula,
\begin{align} \label{eq:ito_empirical}
\langle f,\mub^n_t\rangle
&=
\langle f,\mub^n_0\rangle
+
\int_0^t \frac1n\sum_{i=1}^n
\nabla_x f(X^{i,n}_s,u^n_i)\cdot
b\big(s,X^{i,n}_s,\Vh^n_{i,s}(X^{i,n}_s),\,\Rr^n_{i,s},\alpha^{i,n}(s,\Xbb^n)\big)\,\mathrm{d}s \nonumber
\\
&\quad+
\frac12\int_0^t \frac1n\sum_{i=1}^n
{\rm Tr}\!\left[
\nabla^2_{xx}f(X^{i,n}_s,u^n_i)
\Big(
\sigma(s,X^{i,n}_s)\sigma^\top(s,X^{i,n}_s)
+
\sigma_\circ\sigma_\circ^\top
\Big)
\right]\mathrm{d}s \nonumber
\\
&\quad+
\int_0^t \frac1n\sum_{i=1}^n
\nabla_x f(X^{i,n}_s,u^n_i)\cdot \sigma(s,X^{i,n}_s)\,\mathrm{d}W^i_s
+
\int_0^t \frac1n\sum_{i=1}^n
\nabla_x f(X^{i,n}_s,u^n_i)\cdot \sigma_\circ\,\mathrm{d}W^\circ_s.
\end{align}
Since \(\nabla_x f\) is bounded, the idiosyncratic martingale term still vanishes. Indeed,
\[
\E\left[
\left|
\int_0^T \frac1n\sum_{i=1}^n 
\nabla_x f(X^{i,n}_s,u_i^n)\cdot \sigma(s,X^{i,n}_s)\,\mathrm{d}W^i_s
\right|^2
\right]
=
\E\left[
\int_0^T \frac1{n^2}\sum_{i=1}^n
\left|
\sigma^\top(s,X^{i,n}_s)\nabla_x f(X^{i,n}_s,u_i^n)
\right|^2\,\mathrm{d}s
\right]
\le
\frac{C\,T\|\nabla_x f\|_\infty^2}{n}
\xrightarrow[n\to\infty]{}0.
\]
On the other hand, the common--noise martingale does not vanish. It is expected to converge a limiting object equivalent to
\[
\int_0^t
\int_{\R^d\times[0,1]}
\nabla_x f(y,v)\cdot \sigma_\circ\,
\mub_s(\mathrm{d}y,\mathrm{d}v)\,\mathrm{d}W^\circ_s .
\]
However, here we cannot go to the limit that way since we only have a weak convergence.

\medskip
\textbf{Step 8 (identification of the conditional law and conclusion).}
Let $q\ge1$, let $f^1,\dots,f^q$ be smooth test functions on $\R^d\times[0,1]$, and let
$F:\R^q\to\R$ be smooth. Using the previous identity \eqref{eq:ito_empirical}, the standard It\^o formula applied to
\[
\Big(\langle f^1,\mub^n_t\rangle,\dots,\langle f^q,\mub^n_t\rangle\Big),
\]
the weak convergence of $(\Pr^n)_{n \ge 1}$
and \eqref{eq:density_equality_better}, we obtain
\begin{align*}
&\E\Big[F(\langle f^1,\mub_t\rangle,\dots,\langle f^q,\mub_t\rangle)\Big]
=
\E\Big[F(\langle f^1,\mub_0\rangle,\dots,\langle f^q,\mub_0\rangle)\Big]
\\
&\quad+
\sum_{\ell=1}^q
\E\Bigg[
\int_0^t
\partial_\ell F(\langle f^1,\mub_s\rangle,\dots,\langle f^q,\mub_s\rangle)
\\
&\hspace{2.2cm}\times
\int_{\R^d\times[0,1]\times A}
\nabla_x f^\ell(y,v)\cdot
b\big(s,y,\overline{p}_{\mub_s}(y,v),\Rr_{\mub_s}(v),a\big)\,
\Gamma(s,y,v,\mub_s)(\mathrm{d}a)\,
\mub_s(\mathrm{d}y,\mathrm{d}v)\,\mathrm{d}s
\Bigg]
\\
&\quad+
\frac12\sum_{\ell=1}^q
\E\Bigg[
\int_0^t
\partial_\ell F(\langle f^1,\mub_s\rangle,\dots,\langle f^q,\mub_s\rangle)
\\
&\hspace{2.2cm}\times
\int_{\R^d\times[0,1]}
{\rm Tr}\!\left[
\nabla^2_{xx}f^\ell(y,v)
\Big(
\sigma(s,y)\sigma^\top(s,y)+\sigma_\circ\sigma_\circ^\top
\Big)
\right]\,
\mub_s(\mathrm{d}y,\mathrm{d}v)\,\mathrm{d}s
\Bigg]
\\
&\quad+
\frac12\sum_{\ell,m=1}^q
\E\Bigg[
\int_0^t
\partial_{\ell m}^2F(\langle f^1,\mub_s\rangle,\dots,\langle f^q,\mub_s\rangle)
\\
&\hspace{2.2cm}\times
\left(
\int_{\R^d\times[0,1]}
\nabla_x f^\ell(y,v)\cdot\sigma_\circ\,
\mub_s(\mathrm{d}y,\mathrm{d}v)
\right)
\left(
\int_{\R^d\times[0,1]}
\nabla_x f^m(y,v)\cdot\sigma_\circ\,
\mub_s(\mathrm{d}y,\mathrm{d}v)
\right)
\mathrm{d}s
\Bigg].
\end{align*}
Here
\[
\Gamma(s,y,v,\mub_s)(\mathrm{d}a)
:=
\widehat{\Gamma}
\big(s,y,v,\overline{p}_{\mub_s}(y,v),\overline{\Rr}_{\mub_s}(v)\big)(\mathrm{d}a),
\]
with
\[
\overline{\Rr}_{\mub_s}(v)
=
\Lc^{\mub_s}\big(\Gr(v,U),S_s,U\big),
\]
and $\widehat{\Gamma}$ is the disintegration kernel satisfying
\[
\widehat{\Gamma}(t,y,v,e,m)(\mathrm{d}a)\,
\overline{\Gamma}_t(\mathrm{d}y,\mathrm{d}v,\mathrm{d}e,\mathrm{d}m,A)
=
\overline{\Gamma}_t(\mathrm{d}y,\mathrm{d}v,\mathrm{d}e,\mathrm{d}m,\mathrm{d}a).
\]

The additional last term above comes from the quadratic variation generated by the common noise. Indeed,
\[
\mathrm{d}\langle \langle f^\ell,\mub\rangle,\langle f^m,\mub\rangle\rangle_s
=
\left(
\int_{\R^d\times[0,1]}\nabla_x f^\ell(y,v)\cdot\sigma_\circ\,\mub_s(\mathrm{d}y,\mathrm{d}v)
\right)
\left(
\int_{\R^d\times[0,1]}\nabla_x f^m(y,v)\cdot\sigma_\circ\,\mub_s(\mathrm{d}y,\mathrm{d}v)
\right)
\mathrm{d}s.
\]

Using the superposition principle with common noise (see \cite[Theorem 1.5., Theorem 1.3]{Lacker-Shkolnikov-Zhang_2020},), we obtain
\[
\Lc(\mub_t)=\Lc(\nub_t),\qquad t\in[0,T],
\]
where, $(Y,V,\nub)$ satisfies : $Y_0=\xi$, $(Y_0,U) \perp W \perp (\nub,\sigma_\circ\, W^\circ)$
\[
\nub_t=\Lc(Y_t,V\mid \nub,\sigma_\circ \,W^\circ),
\]
and
\begin{align*}
\mathrm{d}Y_t
=
\int_A
b\big(t,Y_t,\overline{p}_{\nub_t}(Y_t,V),\Rr_{\nub_t}(V),a\big)\,
\Gamma(t,Y_t,V,\nub_t)(\mathrm{d}a)\,\mathrm{d}t
+
\sigma(t,Y_t)\,\mathrm{d}W_t
+
\sigma_\circ\,\mathrm{d}W^\circ_t .
\end{align*}
This yields the desired identification of the limit and concludes the proof since the second convergence follows from the convergence involving $\overline{\Gamma}$.
\end{proof}

\medskip
Let us take here $\sigma_\circ=0.$ Given a Borel map $\alpha : [0,T] \x \R^d \x [0,1] \to A$, we denote by $(X,U)$ a pair such that : $\Lc(U)={\rm Unif}([0,1])$, $(X_0,U) \perp W$,
\begin{align} \label{eq:no_common_noise_prop}
    \mathrm{d}X_t
    &= b\big(t,X_t,\overline{p}_{\overline{m}_t}(X_t,U),\Rr_{\overline{m}_t}(U),\alpha(t,X_t,U)\big)\,\mathrm{d}t
    +\sigma(t,X_t)\,\mathrm{d}W_t,
    \qquad
    \overline{m}_t=\Lc(X_t,U),
\end{align}
{\color{black}
There exists a sequence of Lipschitz maps $(\alpha^\ell : [0,T] \x \R^d \x [0,1] \to A)_{\ell \ge 1}$ such that $\lim_{\ell \to \infty} \alpha^\ell=\alpha$ a.e.. We introduce
\begin{align*}
    \alpha^{\ell,i,n}(t,x_1,u^n_1,\cdots,x_n,u^n_n)
    :=
    \alpha^\ell(t,x_i,u^n_i).
\end{align*}
}
We set $\Xbb^{\ell,n}:=\Xbb^{n,\alphab^{\ell,n}}$, $\widehat V_{i,t}^{\ell,n}(x)
:=
\frac1n\sum_{j=1}^n \xi^n_{ij}V_n(x-X^{\ell,j,n}_t)$, $\Rr^{\ell,n}_{i,t}
=
\frac1n\sum_{j=1}^n \delta_{(\xi^n_{ij},X^{\ell,j,n}_t)}$ and $\mub^{\ell,n}_t:=\frac{1}{n} \sum_{i=1}^n \delta_{(X^{\ell,i,n}_t,u^n_i)}$, for each $t \in [0,T]$.

\begin{proposition} \label{corollary:weak_poc}
For each $t\in[0,T]$, for the weak topology,
\[
\lim_{\ell \to \infty}\lim_{n \to \infty}\Lc(\mub^{\ell,n}_t)={}\delta_{\overline{m}_t},
\]
and for any bounded continuous map $F$,
\begin{align*}
&\lim_{\ell\to\infty}\lim_{n\to\infty}\frac{1}{n}\sum_{i=1}^{n}\E\Bigg[\int_0^T
F\Big(t,X^{\ell,i,n}_t,\Vh^{\ell,n}_{i,t}(X^{\ell,i,n}_t),\Rr^{\ell,n}_{i,t}, \alpha^{\ell,i,n}(t,\Xbb^{\ell,n})\Big)\,\mathrm{d}t\Bigg]
\\
&=
\E\Bigg[\int_{0}^T F\big(t,X_t,\overline{p}_{\overline{m}_t}(X_t,U),\Rr_{\overline{m}_t}(U), \alpha(t,X_t,U)\big)\,\mathrm{d}t\Bigg].
\end{align*}

\end{proposition}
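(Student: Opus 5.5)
The plan is to do the two limits in order: first $n\to\infty$ with $\ell$ fixed, using \Cref{prop:weak_poc}; then $\ell\to\infty$, using a stability argument for the limiting McKean--Vlasov dynamics with the Lipschitz feedback $\alpha^\ell$ replaced by $\alpha$.

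\textbf{Step 1: $n\to\infty$ for fixed $\ell$.} The profile $\alphab^{\ell,n}$ is an admissible (Markovian) profile. By \Cref{prop:weak_poc}, with $\sigma_\circ=0$, $(\Lc(\mub^{\ell,n}))_n$ is relatively compact. Every limit point is the law of a flow $\mub^\ell$ with $\Lc(\mub^\ell_t)=\Lc(\Lc(X^\ell_t,U\mid\Gc_t))$. Here $(X^\ell,U,\mub^\ell)$ solves \eqref{eq:mc_kean_representative} with some kernel $\Lambda^\ell$. The extra information is that the control is the continuous function $\alpha^\ell(t,x_i,u^n_i)$ of the particle's own coordinates. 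In Step 6 of that proof, $\overline\Gamma^n$ then carries $a=\alpha^\ell(t,y,v)$. Since $\alpha^\ell$ is continuous, the weak limit satisfies $\overline\Gamma_t(\mathrm{d}a\mid y,v,\dots)=\delta_{\alpha^\ell(t,y,v)}$, so $\Lambda^\ell=\delta_{\alpha^\ell(t,x,u)}$. The limit flow therefore solves the conditional McKean--Vlasov equation \eqref{eq:no_common_noise_prop} with $\alpha^\ell$, where the measure argument is the conditional law given $\mub^\ell$.

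\textbf{Step 2: uniqueness and determinism of the limit.} With $\sigma_\circ=0$, for a given Lipschitz $\alpha^\ell$, I expect the nonlinear Fokker--Planck equation
\[
\partial_t m=-\nabla_x\cdot\bigl(b(t,x,\overline p_{m_t}(x,u),\Rr_{m_t}(u),\alpha^\ell(t,x,u))\,m\bigr)+\tfrac12\nabla_x^2:(\sigma\sigma^\top m)
\]
to have a unique solution. This is the general uniqueness result \Cref{prop:uniqueness_FP} referred to earlier. It uses that $b$ is Lipschitz in $(p,r)$, together with the local H\"older/$L^\infty$ density bounds of \Cref{lemma:estimations} applied to the $(x,u)$-densities. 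Every realization of $\mub^\ell$ solves this equation for a.e.\ $\om$. Hence $\mub^\ell=\overline m^\ell$ is deterministic, every limit point coincides, and $\lim_n\Lc(\mub^{\ell,n}_t)=\delta_{\overline m^\ell_t}$. The second convergence of \Cref{prop:weak_poc} then gives the $n$-limit of the $F$-averages as
\[
\E\Bigl[\int_0^T F\bigl(t,X^\ell_t,\overline p_{\overline m^\ell_t}(X^\ell_t,U),\Rr_{\overline m^\ell_t}(U),\alpha^\ell(t,X^\ell_t,U)\bigr)\,\mathrm{d}t\Bigr].
\]

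\textbf{Step 3: $\ell\to\infty$.} This is the main obstacle, because $\alpha^\ell\to\alpha$ only a.e. and $\alpha$ is merely Borel. I would argue as in \Cref{lemma:continuity} and \Cref{prop:cong_graphon}.
\begin{itemize}
\item The densities of $\overline m^\ell$ are locally uniformly H\"older in $x$ by \Cref{lemma:estimations}, so $(\overline m^\ell)$ is relatively compact, with locally uniform convergence of $x$-densities for a.e.\ $u$.
\item Since $\alpha^\ell\to\alpha$ a.e.\ and the limit laws have Lebesgue densities (non-degeneracy), we get $\alpha^\ell(t,x,u)\to\alpha(t,x,u)$ in $\mathrm{d}t\otimes\overline m^\ell_t$-measure, uniformly in $\ell$. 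This uses equi-integrability of the densities to pass from a.e.\ convergence to convergence under the varying laws.
\item One can then pass to the limit in the weak Fokker--Planck formulation. This uses continuity of $b$ in $(p,r,a)$, continuity of $m\mapsto\overline p_m$ under locally uniform density convergence, and continuity of $m\mapsto\Rr_m(u)$.
\item Uniqueness for the limiting equation with feedback $\alpha$ identifies the limit as $\overline m$. To be safe, I would use Girsanov/Krylov estimates, since $\alpha$ is not Lipschitz; uniqueness holds because the drift nonlinearity is Lipschitz in $(p,r)$ and the linear equation with bounded measurable drift is well posed.
\end{itemize}
Finally, $\Lc(X^\ell,U)\to\Lc(X,U)$ in the same way. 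Bounded continuous $F$ evaluated at $(X^\ell_t,\overline p_{\overline m^\ell_t}(X^\ell_t,U),\Rr_{\overline m^\ell_t}(U),\alpha^\ell)$ then converges, again using convergence in measure of $\alpha^\ell$ and of the densities under uniformly absolutely continuous laws. This gives both claims.
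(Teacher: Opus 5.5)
Your proposal is correct and follows essentially the paper's route. For fixed $\ell$ you apply \Cref{prop:weak_poc} with $\sigma_\circ=0$, identify the limiting kernel as $\delta_{\alpha^\ell}$, and use the uniqueness of \Cref{prop:uniqueness_FP} to force the limit to be the deterministic flow $\overline m^\ell$; the passage $\ell\to\infty$ is then the stability argument underlying \Cref{prop:cong_graphon}. The paper gets the fixed-$\ell$ Fokker--Planck dynamics by passing to the limit in It\^o's formula for $F(\langle f^1,\mub^{\ell,n}_t\rangle,\dots,\langle f^q,\mub^{\ell,n}_t\rangle)$, where no second-order term survives since $\sigma_\circ=0$. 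For the $\ell$-limit it simply cites \Cref{prop:cong_graphon} with $\Gr^k=\Gr$ and $\delta_{\alpha^\ell}\to\delta_\alpha$ a.e., whereas you redo that step by hand. If you do, note that \Cref{lemma:estimations} gives no regularity in $u$, so it does not yield the locally uniform convergence of $x$-densities for a.e.\ $u$ that you assert. Instead, use the uniform $x$-H\"older bounds together with the fixed uniform label marginal to get pointwise convergence of $\overline p_{\overline m^\ell_t}$, and the uniform local bounds to get weak $L^1$ convergence of the densities.
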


\begin{proof}
    If we denote by $(X^\ell,U)$ the weak solution of \Cref{eq:no_common_noise_prop}, by \Cref{prop:cong_graphon} associated with $\alpha^\ell$, with $\overline{m}^\ell_t=\Lc(X^\ell_t,U)$, we have
    $$
        \lim_{\ell \to \infty}\delta_{\overline p_{\overline{m}^\ell_t}(x,u)}(\mathrm{d}e)\,
\delta_{\alpha(t,x,u)}(\mathrm{d}a)\,
p_{\overline{m}^\ell_t}(x,u)\,\mathrm{d}x\,\mathrm{d}u\,\mathrm{d}t = \delta_{\overline p_{\overline{m}_t}(x,u)}(\mathrm{d}e)\,
\delta_{\alpha(t,x,u)}(\mathrm{d}a)\,
p_{\overline{m}_t}(x,u)\,\mathrm{d}x\,\mathrm{d}u\,\mathrm{d}t.
$$
    Let us now fix $\ell \ge 1$. We essentially apply \Cref{prop:weak_poc} with $\sigma_\circ=0$. Let $q\ge1$, $f^1,\dots,f^q$ smooth and $F:\R^q\to\R$ smooth. Using Ito formula at the $n$--particle level and going to the limit with \Cref{prop:weak_poc}. 
\begin{align*}
&\E\Big[F(\langle f^1,\mub_t\rangle,\dots,\langle f^q,\mub_t\rangle)\Big] = \lim_{n \to \infty}\E\Big[F(\langle f^1,\mub^n_t\rangle,\dots,\langle f^q,\mub^n_t\rangle)\Big]
\\
&=
\E\Big[F(\langle f^1,\mub_0\rangle,\dots,\langle f^q,\mub_0\rangle)\Big] \\
&\quad+
\sum_{\ell=1}^q\E\Bigg[\int_0^t \int_{\R^d \x [0,1]}
\partial_\ell F(\langle f^1,\mub_s\rangle,\dots,\langle f^q,\mub_s\rangle)\,\nabla_x f^\ell(y,v)\cdot\,
b\big(s,y,\overline{p}_{\overline{\mu}_s}(y,v),\Rr_{\mub_s}(v),\alpha^\ell(s,y,v)\big)\,\mub_s(\mathrm{d}y,\mathrm{d}v)\,\mathrm{d}s\Bigg] \\
&\quad+
\frac12\sum_{\ell=1}^q\E\Bigg[\int_0^t \partial_\ell F(\langle f^1,\mub_s\rangle,\dots,\langle f^q,\mub_s\rangle)\,
\int_{\R^d \x [0,1]} {\rm Tr} \left[ \nabla_x^2 f^\ell(y,v) \sigma(s,y) \sigma^\top(s,y) \right]\,\mub_s(\mathrm{d}y,\mathrm{d}v)\,\mathrm{d}s\Bigg].
\end{align*}
This being true for any $q, (f^1,\dots,f^q)$ and $F$. By the uniqueness of $(\overline{m}^\ell_t)_{t \in [0,T]}$ in \Cref{prop:uniqueness_FP}, we deduce that $\lim_{n \to \infty} \Lc(\mub^{\ell,n}_t)=\Lc(\mub_t)=\delta_{\overline{m}^\ell_t}$ for each $t \in [0,T]$. By combining the convergence in $n$ and $\ell$, we obtain the desired result. The second result follows directly.
\end{proof}

Let us consider a sequence of deviating controls $(\beta^{i,n})_{i \le n,\,n\ge1}$ with $\beta^{i,n}\in\Ac_n$. For each player $i\in\{1,\dots,n\}$,
we denote by $\boldsymbol{\beta}^n=(\beta^{1,n},\cdots,\beta^{n,n}),$
\[
\Ybb^i:=\Xbb^{n,(\alphab^{-i},\beta^{i,n})}=\big(Y^{i,1},\dots,Y^{i,n}\big)
\]
the state vector of the $n$--particle system when player $i$ uses $\beta^{i,n}$ and all other players use the reference profile $\alphab$.
We also introduce the associated empirical measures
\[
\mut^{i,n}_t:=\frac1n\sum_{j=1}^n\delta_{(\xi^n_{i,j},\,Y^{i,j}_t,\,u^n_j)},
\qquad\mub^{i,n}_t:=\frac1n\sum_{j=1}^n\delta_{(Y^{i,j}_t,\,u^n_j)},
\qquad
\mub^{n,\betab^n}_t:=\frac1n\sum_{i=1}^n\delta_{(Y^{i,i}_t,\, u^n_i)}.
\]
Hence $\mu^{i,n}_t$ is the empirical measure of the whole population under the deviation of player $i$, while
$\mub^{n,\betab^n}_t$ is the empirical distribution of the deviating player's own state across the population of deviations.

\medskip
Fix now a continuous $\F$--adapted process $\nub=(\nub_t)_{t\in[0,T]}$, independent of $(X_0,U,W)$, and a Borel kernel
$\Lambda^\beta:[0,T]\times\Pc(\R^d \x [0,1])\times\R^d \x [0,1]\to\Pc(A)$. We define $(X^\beta,U)$ as the (weak) solution of : $(\nub, W^\circ) \perp (X_0,U) \perp W$,
\begin{align} \label{eq:deviating_limit}
    \mathrm{d}X^\beta_t
    =
    \int_A b\big(t,X^\beta_t,\overline{p}_{\nub_t}(X^\beta_t,U),\Rr_{\nub_t}(U),a\big)\,
    \Lambda^\beta(t,X^\beta_t,U,\nub_t)(\mathrm{d}a)\,\mathrm{d}t
    +\sigma(t,X_t)\,\mathrm{d}W_t + +\sigma_\circ\,\mathrm{d}W^\circ_t,
    \quad
    \mub^\beta_t=\Lc(X^\beta_t,U\mid \Gc_t),
\end{align}
where $\Gc_t=\sigma\{ \nub_s,\,\sigma_\circ W^\circ_s,\,s \in [0,t]\}$ and, we recall that $p_{\nub_t}(\cdot,\cdot)$ denotes a version of the density of $\nub_t$ and $\overline{p}_{\nub_t}(x,u)=\int_0^1 \Gr(u,v)p_{\nub_t}(x,v)\mathrm{d}v$. We recall that we denote by $\mub^n_t$ the empirical distribution associated with the reference profile $\alphab^n=(\alpha^{1,n},\cdots,\alpha^{n,n})$, i.e., $\mub^n_t=\frac{1}{n} \sum_{j=1}^n \delta_{(X^{i,n,\alphab^n}_t,\,u^n_i)}$.

\begin{proposition} \label{weak_deviating}
Then, the sequence
\[
\left( \Lc(\mub^{n,\betab^n})\right)_{n\ge1}
\]
is relatively compact in $\Wc_0$.
Moreover, along any convergent subsequence $\left(\Lc(\mub^{n_k,\, \betab^{n_k}})\right)_{k\ge1}$, assume that
\[
\Lc(\mub^{n_k}_t)\xrightarrow[k\to\infty]{}\Lc(\nub_t)\qquad\text{for each }t\in[0,T].
\]
Then there exist a kernel $\Lambda^\beta$ and an associated pair $(X^\beta,U,\mub^\beta)$ as above in {\rm\Cref{eq:deviating_limit}} such that
\[
\Lc(\mub^{n,\betab^n}_t)\xrightarrow[k\to\infty]{}\Lc(\mub^\beta_t)
\qquad\text{for each }t\in[0,T],
\]
and, for every bounded continuous function $F$,
\begin{align*}
&\lim_{k\to\infty}\frac1{n_k}\sum_{i=1}^{n_k}\E\Bigg[\int_0^T
F\Big(t,Y^{i,i}_t,\Vh^n_{i,t}(Y^{i,i}_t),\Rr^{n_k}_{i,t},\beta^{n_k}(t,\Ybb^i)\Big)\,\mathrm{d}t\Bigg]
\\
&=
\E\Bigg[\int_0^T\int_A
F\big(t,X^\beta_t,\overline{p}_{\nub_t}(X^\beta_t,U),\Rr_{\nub_t}(U),a\big)\,
\Lambda^\beta(t,X^\beta_t,U,\nub_t)(\mathrm{d}a)\,\mathrm{d}t\Bigg].
\end{align*}
\end{proposition}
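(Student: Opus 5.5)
The proof follows the scheme of \Cref{prop:weak_poc}. The new feature is a \emph{one-player deviation}. In the $i$--th system only player $i$ changes control, and the deviating player's own state $Y^{i,i}$ is what enters $\mub^{n,\betab^n}$. Relative compactness in $\Wc_0$ follows, exactly as in Step 0 of \Cref{prop:weak_poc}, from boundedness of $(b,\sigma)$ and the criterion of \cite[Theorem A.2.]{djete2019general} applied to the processes $(Y^{i,i})_{i\le n}$ with labels $u^n_i$.

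\textbf{Step 1: the environment is unaffected by the deviation.} The first step is to show that, for each $i$, the empirical measures $\mub^{i,n}$ and $\mut^{i,n}$ generated by the deviated system $\Ybb^i$ are asymptotically indistinguishable from $\mub^n$ and $\widehat\mu^{i,n}$ generated by the reference profile $\alphab^n$.
\begin{itemize}
\item[(a)] Since only one coordinate of the control changes, a Girsanov change of measure compares the law of $\Ybb^i$ with that of $\Xbb^{n,\alphab^n}$. The density of the Girsanov drift is bounded by $\sigma^{-1}$ times $2\|b\|_\infty$, acting on the Brownian motion $W^i$ only. Hence the relative entropy is bounded uniformly in $n$ and $i$, and events of vanishing probability under the reference law still have vanishing probability under the deviated law.
\item[(b)] The deviation affects the other players only through $\widehat V^n_{j,t}$ and $\Rr^n_{j,t}$, via one term of weight $\xi^n_{ji}/n$. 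For $\Rr^n_{j,t}$ this term is $O(1/n)$ in the bounded--Lipschitz distance. For the local term it is $\frac1n \xi^n_{ji}V_n(\cdot)$, which is only $O(1/(n\varepsilon_n^d))$; this tends to $0$ precisely by the moderate scaling.
\item[(c)] Combining (a) and (b) with the Lipschitz continuity of $b$ in $(p,r)$ and a Gronwall/coupling argument, I obtain $\Lc(\mub^{i,n}_t)$ close to $\Lc(\mub^n_t)$ uniformly in $i$. Consequently, along the subsequence, the environments seen by the deviators converge in law to $\nub_t$.
\end{itemize}

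\textbf{Step 2: local density at the deviator.} Next I reproduce Steps 1--5 of \Cref{prop:weak_poc} for the quantity $\Vh^n_{i,t}(Y^{i,i}_t)$, i.e. the local average evaluated at the deviating particle. I expand the $k$--th moments and use the non-degeneracy of the joint densities of $(Y^{i,i}_t,Y^{i,j_1}_t,\dots,Y^{i,j_k}_t)$ together with the uniform Hölder bounds of \Cref{lemma:estimations}. Diagonal terms are killed by $1/(n\varepsilon_n^d)\to0$.

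The identification step differs from \Cref{prop:weak_poc}. The limiting multi--point density now factorizes as the density of the deviator, with law governed by $\mub^\beta$, times $k$ copies of the density of $\nub_t$ weighted by $\Gr(v,u_\ell)$. This uses the approximate conditional independence given the common noise, and it uses Step 1 to replace the other particles by the reference population.

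This is the \textbf{main obstacle}: the deviator and the population are no longer exchangeable, so the $k$--point density limits must be taken for the mixed family. I would handle it by conditioning on $(\nub,W^\circ)$ and proving the product structure through the moment identities, then invoking \Cref{lemma:proba_equality} as in Step 6. This yields $e=\overline p_{\nub_t}(y,v)$ and $m=\Rc(t,\nub_t,v)$ in the limiting occupation measure.

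\textbf{Step 3: identification and conclusion.} I define $\overline\Gamma^{n,\beta}:=\frac1n\sum_i\delta_{(Y^{i,i}_t,u^n_i,\Vh^n_{i,t}(Y^{i,i}_t),\mut^{i,n}_t,\beta^{i,n}(t,\Ybb^i))}\,\mathrm{d}t$ and take joint limits of $(\mub^{n,\betab^n},\overline\Gamma^{n,\beta},\mub^n)$. The moment bounds of Step 2 give convergence against test functions with polynomial growth.

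I then disintegrate the $a$--coordinate to define $\Lambda^\beta(t,y,v,\nub_t)$. This requires conditioning on $\nub_t$ measurably, as in \eqref{eq:mfg_control}. Applying Itô's formula to $F(\langle f^1,\mub^{n,\betab^n}_t\rangle,\dots)$ and passing to the limit as in Step 8 yields the controlled martingale problem with drift $b(t,y,\overline p_{\nub_t},\Rr_{\nub_t},a)$ integrated against $\Lambda^\beta$, including the common--noise quadratic covariation. The superposition principle \cite[Theorems 1.3, 1.5]{Lacker-Shkolnikov-Zhang_2020} then gives $\Lc(\mub^{n,\betab^n}_t)\to\Lc(\mub^\beta_t)$ for each $t$. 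The convergence of the averaged functionals $F$ follows from the convergence of $\overline\Gamma^{n,\beta}$.
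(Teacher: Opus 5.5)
\textbf{There is a genuine gap, and it sits in your Step 1, on which the rest depends.}

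Item (b) is false for closed-loop profiles. In the $i$-th deviated system, player $j\neq i$ plays $\alpha^{j,n}(t,\Ybb^i)$, and $\alpha^{j,n}$ is an arbitrary progressively measurable function of all $n$ paths. It may therefore react to the $i$-th path at order one; for instance, every player could switch action according to the sign of $x^1(t)$. So the deviation propagates through the other players' feedbacks, not only through the $O(1/n)$ term in $\Rr^n_{j,t}$ and the $O(1/(n\varepsilon_n^d))$ term in $\Vh^n_{j,t}$. In that example the law of the population seen by deviator $i=1$ changes by $O(1)$, so no estimate uniform in $i$ can hold, only one averaged over $i$.

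Item (c) cannot be obtained by Gronwall or synchronous coupling either. That argument needs Lipschitz dependence of the drift on the state vector, but $b$ is only continuous in $x$ and the feedbacks are only Borel. Moreover, $x\mapsto \frac1n\sum_j\xi^n_{ij}V_n(x-x_j)$ has Lipschitz constant of order $\varepsilon_n^{-(d+1)}$ even for smooth $V$, and here $V$ is merely bounded. Item (a), a uniform entropy bound, only gives uniform absolute continuity, not asymptotic equality of the laws of the environment.

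Consequently three things you rely on are unjustified:
\begin{enumerate}
\item the replacement of the other particles by the reference population in Step 2;
\item the ``approximate conditional independence'' you use to factorize the $k$-point densities;
\item the identification in Step 3 of the environments $\mut^{i,n}$, which live in $n$ different systems, with a single random $\nub_t$ jointly with $\mub^{n,\betab^n}$.
\end{enumerate}

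\textbf{The missing idea is the paper's route: never compare two systems pathwise, but write each deviated system as a tilt of the reference one.} With $\mathrm{d}Z^{i,n}_t=Z^{i,n}_t\phi^{i,n}_t\,\mathrm{d}W^i_t$ one has exactly $\E[\Phi(\Ybb^i)]=\E[Z^{i,n}_T\Phi(\Xbb^n)]$. So the deviator is the reference particle $X^{i,n}$ weighted by $Z^{i,n}$, and its environment is $\mub^n$ itself.

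One then passes to the limit in the $i$-averaged law of $(X^{i,n},u^n_i,Z^{i,n},\Gamma^{i,n},\mub^n,\overline\Gamma^n,W^\circ)$. Because $Z^{i,n}$ is a stochastic exponential of $W^i$ alone and one averages over $i$, the limit satisfies $\E[Z_T\mid\widehat{\Gc}_T]=1$ (the paper takes this from \cite{closed-loop-MFG_MDF}). This is the precise, averaged sense in which a single deviation does not move the population. The remaining identifications follow:
\begin{enumerate}
\item the tilted conditional law of $(X_t,U)$ has a density $h(t,\mub,\cdot,\cdot)$, which gives the product form of $p^k$;
\item $\Lambda^\beta$ comes from disintegrating $\E[Z_t\delta_{(X_t,U)}\delta_{\mub_t}\Gamma_t]$;
\item the superposition principle yields $\Lc(Y_t,V,\nub_t)=\E[Z_t\delta_{(X_t,U)}\delta_{\mub_t}]$.
\end{enumerate}

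Alternatively, in the spirit of \cite{Lacker-closedloop}, orthogonality of the $W^j$ gives an averaged version of your Step 1. Writing $\Phi-\E[\Phi]=\sum_j\int H^j\,\mathrm{d}W^j+\cdots$, one gets $|\E[(Z^{i,n}_T-1)\Phi]|\le C(\E\int_0^T|H^i_t|^2\mathrm{d}t)^{1/2}$, hence an $O(n^{-1/2})$ bound on average over $i$. This only repairs the marginal law of the environment, though. You would still need the tilted representation to obtain the joint deviator--environment structure.
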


\begin{proof}
\textbf{Step 1 (moment estimates for the local interaction term).}
Arguing exactly as in the proof of \Cref{prop:weak_poc} (tightness of averaged multi--point densities and diagonal extraction),
we can find a subsequence (not relabeled) such that for each integer $k\ge1$,
\begin{align}\label{eq:dev_moment_limit}
\lim_{n\to\infty}\frac1n\sum_{i=1}^n \int_0^T
\E\Big[\big|\Vh^n_{i,t}(Y^{i,i}_t)\big|^k \Big]\mathrm{d}t
=
\int_{[0,T]\times\R^d} p^{k}(t,y,y,\dots,y)\,\mathrm{d}y\,\mathrm{d}t,
\end{align}
where, for $(j_1,\dots,j_k)$ pairwise distinct and different from $i$,
\[
\Lc\big(Y^{i,i}_t,Y^{i,j_1}_t,\dots,Y^{i,j_k}_t\big)(\mathrm{d}x_i,\mathrm{d}x_{j_1}\cdots\mathrm{d}x_{j_k})
=
p^{i,j_1,\dots,j_k}(t,x_i,x_{j_1},\dots,x_{j_k})\,\mathrm{d}x_i\,\mathrm{d}x_{j_1}\cdots\mathrm{d}x_{j_k},
\]
and $p^k$ is the locally uniform limit of the averaged densities
\[
\frac{1}{n}\sum_{i=1}^n\frac{1}{n^k}\sum_{(j_1,\dots,j_k)\in\Jc_i}
\xi^n_{i,j_1} \x \cdots \x \xi^n_{i,j_k}\,p^{i,j_1,\dots,j_k}\xrightarrow[n\to\infty]{}p^{k}.
\]

\medskip
\textbf{Step 2 (Girsanov representation of deviating systems).}
We follow a similar argument to \cite[Proposition 5.6.]{Lacker-closedloop} (see also \cite[Lemma 3.9.]{closed-loop-MFG_MDF}). Define
\[
\mathrm{d}Z^{i,n}_t=Z^{i,n}_t\,\phi^{i,n}_t\,\mathrm{d}W^i_t,\qquad Z^{i,n}_0=1,
\]
with
\[
\phi^{i,n}_t
=
\sigma(t,X^{i,n}_t)^{-1}\left(b\Big(t,X^{i,n}_t,\Vh^n_{i,t}(X^{i,n}_t),\Rr^n_{i,t},\beta^{i,n}(t,\Xbb^n)\Big)
-
b\Big(t,X^{i,n}_t,\Vh^n_{i,t}(X^{i,n}_t),\Rr^n_{i,t},\alpha^{i,n}(t,\Xbb^n)\Big) \right).
\]
Let $\mathrm{d}\P^{i,n}:=Z^{i,n}_T\,\mathrm{d}\P$.The map $b$ and $\sigma$ being bounded and invertible respectively, by Girsanov's theorem,
\[
\Lc^{\P^{i,n}}(\Xbb^n)=\Lc(\Ybb^i),\qquad 1\le i\le n.
\]
Introduce the empirical law
\[
\Qr^n
:=
\frac1n\sum_{i=1}^n
\P\circ\big(X^{i,n},u^n_i,Z^{i,n},\Gamma^{i,n},\mub^n,\overline{\Gamma}^n,\,W^\circ\big)^{-1},
\]
where
\[
\Gamma^{i,n}:=\delta_{(p^{i,n}_t,\,\beta^{i,n}(t,\Xbb^n))}(\mathrm{d}e,\mathrm{d}a)\,\mathrm{d}t,
\qquad
p^{i,n}_t:=\Vh^n_{i,t}(X^{i,n}_t),
\]
and
\[
\overline{\Gamma}^{n}_t(\mathrm{d}y,\mathrm{d}e,\mathrm{d}a)\,\mathrm{d}t
:=
\frac{1}{n}\sum_{i=1}^n\delta_{(X^{i,n}_t,p^{i,n}_t,\alpha^{i,n}(t,\Xbb^n))}
(\mathrm{d}y,\mathrm{d}e,\mathrm{d}a)\mathrm{d}t.
\]
Using the fact that $A$ is compact, the coefficients $(b,\sigma)$ are bounded, similarly to the way we argued in \Cref{prop:weak_poc}, the sequence $(\Qr^n)_{n\ge1}$ is relatively compact for the weak topology; along a subsequence (not relabeled) we have
\[
\Qr^n \Longrightarrow \Qr:=\P\circ(X,U,Z,\Gamma,\mub,\overline{\Gamma},W^\circ)^{-1}.
\]
Since we can control the moments of random variables $(U,Z, \Gamma)$ (through their sequence), we can check that the convergence involving $(\Qr^n)_{n \ge 1}$ is also true in the sense that : for any continuous map $(x,u,z,\gamma,\overline{m},\overline{\gamma}) \mapsto G(x,u,z,\gamma,\overline{m},\overline{\gamma}) $ with polynomial growth in $(u,z,\gamma)$ and bounded in $(x,\overline{m},\overline{\gamma})$, we have
\begin{align*}
    \lim_{n \to \infty} \frac{1}{n} \sum_{i=1}^n\E \left[ G(X^{i,n},u^n_i,Z^{i,n},\Gamma^{i,n},\mub^n,\overline{\Gamma}^n) \right] = \E \left[ G(X,U,Z,\Gamma,\mub,\overline{\Gamma}) \right].
\end{align*}
\medskip
\textbf{Step 3 (identification of the limiting densities $p^k$).}
Let $g,f^1,\dots,f^k$ be smooth maps on $[0,T]\times\R^d$. By definition of $p^k$,
\begin{align*}
&\int_{[0,T]\times\R^{d(k+1)}} g(t,y)\prod_{\ell=1}^k f^\ell(t,x_\ell)\,
p^k(t,y,x_1,\dots,x_k)\,\mathrm{d}y\,\mathrm{d}x_1\cdots\mathrm{d}x_k\,\mathrm{d}t \\
&\qquad=
\lim_{n\to\infty}\int_0^T
\E\Bigg[\frac1n\sum_{i=1}^n g(t,Y^{i,i}_t)\prod_{\ell=1}^k\Big(\frac1n\sum_{j=1}^n \xi^n_{i,j}f^\ell(t,Y^{i,j}_t)\Big)\Bigg]\mathrm{d}t.
\end{align*}
Using $\Lc^{\P^{i,n}}(\Xbb^n)=\Lc(\Ybb^i)$ and $\mathrm{d}\P^{i,n}=Z^{i,n}_T\,\mathrm{d}\P$, we rewrite the right--hand side as
\begin{align*}
&\lim_{n\to\infty}\int_0^T
\E\Bigg[\frac1n\sum_{i=1}^n Z^{i,n}_t\, g(t,X^{i,n}_t)\prod_{\ell=1}^k\Big(\frac1n\sum_{j=1}^n \xi^n_{i,j}f^\ell(t,X^{j,n}_t)\Big)\Bigg]\mathrm{d}t
\\
&=
\int_0^T \E\Big[Z_t\, g(t,X_t)\prod_{\ell=1}^k \int_{\R^d \x [0,1]} \Gr(U,v) f^\ell(t,y) \mub_t(\mathrm{d}y,\mathrm{d}v) \Big]\mathrm{d}t.
\end{align*}

By the similar techniques of \cite[Lemma 3.9.]{closed-loop-MFG_MDF} (martingale property of the limit exponential), conditionally to $\widehat{\Gc}_T$, the process $Z$ is a Dol\'eans exponential with
$\E[Z_T\mid \widehat{\Gc}_T]=1$. Define the conditional probability $\P^{\mub}$ by
\[
\mathrm{d}\P^{\mub} := Z_T\,\mathrm{d}\P^{\widehat{\Gc}_T},
\qquad \widehat{\Gc}_t:=\sigma(\mub_s,\,\sigma_\circ W^\circ_s:s\le t).
\]
Moreover, by \cite[Proposition 9.1]{10.1214/23-AAP1993} (or in \cite[Lemma 3.9.]{closed-loop-MFG_MDF} where we can see that the equation satisfied by $X$ under $\P^{\mub}$), for each $t>0$ the law $\Lc^{\P^{\mub}}(X_t,U)$ admits a density
$h(t,\mub,\cdot,\cdot)$. Therefore,
\begin{align*}
&\int_0^T \E\Big[Z_t\, g(t,X_t)\prod_{\ell=1}^k \int_{\R^d \x [0,1]} \Gr(U,v) f^\ell(t,y)\, \mub_t(\mathrm{d}y,\mathrm{d}v) \Big]\mathrm{d}t
\\
&=
\int_0^T \E\Bigg[\int_{\R^d} g(t,x)\, \prod_{\ell=1}^k  \int_{\R^d \x [0,1]} \Gr(u,v) f^\ell(t,y) \mub_t(\mathrm{d}y,\mathrm{d}v) h(t,\mub,x,u)\,\mathrm{d}x\,\mathrm{d}u\Bigg]\mathrm{d}t.
\end{align*}
Using again that $\mub_t(\mathrm{d}x,\mathrm{d}u)=p_{\mub_t}(x,u)\,\mathrm{d}x\,\mathrm{d}u$ (as in  \Cref{prop:weak_poc}), we obtain
\begin{align*}
&\int_{[0,T]\times\R^{d(k+1)}} g(t,y)\prod_{\ell=1}^k f^\ell(t,x_\ell)\,
p^k(t,y,x_1,\dots,x_k)\,\mathrm{d}y\,\mathrm{d}x_1\cdots\mathrm{d}x_k\,\mathrm{d}t\\
&=
\int_0^T \E\Bigg[\int_{(\R^d \x [0,1])^{k+1}} g(t,y)\prod_{\ell=1}^k \Gr(v,u_\ell)f^\ell(t,x_\ell)\,
h(t,\mub,y,v)\,p_{\mub_t}(x_1,u_1)\cdots p_{\mub_t}(x_k,u_k)\,
\mathrm{d}y\,\mathrm{d}v\,\mathrm{d}x_1\,\mathrm{d}u_1\cdots\mathrm{d}x_k\,\mathrm{d}u_k\Bigg]\mathrm{d}t.
\end{align*}
Since this holds for all bounded continuous test functions, we deduce that for a.e. $(t,y,x_1,\dots,x_k)$,
\[
p^k(t,y,x_1,\dots,x_k)=\E\left[\int_{[0,1]^{k+1}}\prod_{\ell=1}^k \Gr(v,u_\ell) h(t,\mub,y,v)\,p_{\mub_t}(x_1,u_1)\cdots p_{\mub_t}(x_k,u_k)\mathrm{d}v\,\mathrm{d}u_1 \cdots \mathrm{d}u_k\right].
\]
Plugging into \eqref{eq:dev_moment_limit} gives
\begin{align}\label{eq:dev_moment_identified}
\lim_{n\to\infty}\frac1n\sum_{i=1}^n\int_0^T
\E\Big[\big|\Vh^{n}_{i,t}(Y^{i,i}_t)\big|^k\Big]\mathrm{d}t
&=
\E\Bigg[\int_0^T\int_{\R^d} \left|\int_0^1 \Gr(v,u)\,p_{\mub_t}(y,u)\,\mathrm{d}u\right|^k\,h(t,\mub,y,v)\,\mathrm{d}y\,\mathrm{d}v\,\mathrm{d}t\Bigg] \nonumber
\\
&=
\E\Bigg[\int_0^T Z_t\,\left|\int_0^1 \Gr(U,u)\,p_{\mub_t}(X_t,u)\,\mathrm{d}u\right|^k\,\mathrm{d}t\Bigg].
\end{align}
A diagonal extraction yields a single subsequence along which \eqref{eq:dev_moment_identified} holds for all $k\ge1$.
By the same extension as in  \Cref{prop:weak_poc}, we also obtain the joint convergence: for all $k,q\ge1$ and bounded continuous
$g$ and $f^1,\dots,f^q$,
\begin{align}\label{eq:conv_deviating_better}
&\lim_{n\to\infty}\frac1n\sum_{i=1}^n\int_0^T
\E\Big[\big|\Vh^n_{i,t}(Y^{i,i}_t)\big|^k\, g(t,Y^{i,i}_t)\,\prod_{\ell=1}^q\langle f^\ell,\mut^{i,n}_t\rangle\Big]\mathrm{d}t \nonumber
\\
&=
\E\Bigg[\int_0^T Z_t\,|\overline{p}_{\mub_t}(X_t,U)|^k\,g(t,X_t)\,\prod_{\ell=1}^q \int f^\ell (y,\Gr(U,v),v) \mub_t(\mathrm{d}y,\mathrm{d}v)\,\mathrm{d}t\Bigg].
\end{align}

\medskip
\textbf{Step 4 (identification of the limiting control kernel).}
Let $\varphi:\R\to\R$ be bounded continuous, $g$ bounded continuous, and $L$ a bounded continuous functional of $\mu_t$.
Again as in the proof of \Cref{prop:weak_poc}, \eqref{eq:conv_deviating_better} implies
\[
\E\Bigg[\int_{[0,T] \x \R} Z_t\,\varphi(e)\,g(t,X_t,U)\,L(\mub_t)\,\Gamma_t(\mathrm{d}e,A)\,\mathrm{d}t\Bigg]
=
\E\Bigg[\int_0^T Z_t\,\varphi\big(\overline{p}_{\mub_t}(X_t,U)\big)\,g(t,X_t,U)\,L(\mub_t)\,\mathrm{d}t\Bigg].
\]
In particular, $\Gamma_t(\mathrm{d}e,A)$ concentrates at $e=\overline{p}_{\mub_t}(X_t,U)$.
Define the measure on $\R^d \x [0,1]\times\R\times\Pc(\R^d\x [0,1])\times A$ by
\[
\Lambda(t)(\mathrm{d}y, \mathrm{d}v,\mathrm{d}e,\mathrm{d}m,\mathrm{d}a)
:=
\E\Big[Z_t\,\delta_{(X_t,U)}(\mathrm{d}y,\mathrm{d}v)\,\delta_{\mub_t}(\mathrm{d}m)\,\Gamma_t(\mathrm{d}e,\mathrm{d}a)\Big].
\]
Let $\widehat{\Gamma}(t,y,v,e,m)$ be a regular conditional probability such that
\begin{align} \label{eq:desintegration}
\widehat{\Gamma}(t,y,v,e,m)(\mathrm{d}a)\,\Lambda(t)(\mathrm{d}y,\mathrm{d}v,\mathrm{d}e,\mathrm{d}m,A)
=
\Lambda(t)(\mathrm{d}y,\mathrm{d}v,\mathrm{d}e,\mathrm{d}m,\mathrm{d}a).
\end{align}
We then set, for $t\in[0,T]$,
\[
\Lambda^\beta(t,m,y,v):=\widehat{\Gamma}\big(t,y,v,\overline{p}_{m}(y,v),m\big)\in\Pc(A).
\]

\medskip
{\color{black}

For \(1\le \ell\le q\), set
\[
M_t^{i,n,\ell}:=\langle f^\ell,\mub_t^{i,n}\rangle
=\frac1n\sum_{j=1}^n f^\ell(Y_t^{i,j},u_j^n),
\qquad
\mathbf{M}_t^{i,n}:=\big(M_t^{i,n,1},\dots,M_t^{i,n,q}\big).
\]
Assume that, for each \(j=1,\dots,n\),
\[
\mathrm{d}Y_t^{i,j}
=
B_t^{i,j}\,\mathrm{d}t+\Sigma_t^{i,j}\,\mathrm{d}W_t^j+\sigma_\circ\,\mathrm{d}W_t^\circ.
\]
Then, using the It\^o formula, we observe that for each \(\ell=1,\dots,q\),
\begin{align*}
\mathrm{d}M_t^{i,n,\ell}
&=
b_t^{i,n,\ell}\,\mathrm{d}t
+\frac1n\sum_{j=1}^n \nabla_x f^\ell(Y_t^{i,j},u_j^n)\cdot \Sigma_t^{i,j}\,\mathrm{d}W_t^j
+\bar{\sigma}_t^{i,n,\ell}\,\mathrm{d}W_t^\circ,
\end{align*}
where
\begin{align*}
b_t^{i,n,\ell}
&:=
\frac1n\sum_{j=1}^n
\nabla_x f^\ell(Y_t^{i,j},u_j^n)\cdot B_t^{i,j}
+\frac1{2n}\sum_{j=1}^n
{\rm Tr}\!\Big(
\nabla_{xx}^2 f^\ell(Y_t^{i,j},u_j^n)
\big(\Sigma_t^{i,j}(\Sigma_t^{i,j})^\top+\sigma_\circ\sigma_\circ^\top\big)
\Big),
\\
\bar{\sigma}_t^{i,n,\ell}
&:=
\frac1n\sum_{j=1}^n \nabla_x f^\ell(Y_t^{i,j},u_j^n)^\top \sigma_\circ.
\end{align*}
Moreover,
\begin{align*}
\mathrm{d}\langle Y^{i,i}\rangle_t
&=
\Big(\Sigma_t^{i,i}(\Sigma_t^{i,i})^\top+\sigma_\circ\sigma_\circ^\top\Big)\,\mathrm{d}t,
\\
\mathrm{d}\langle Y^{i,i},M^{i,n,\ell}\rangle_t
&=
\Bigg(
\frac1n\Sigma_t^{i,i}(\Sigma_t^{i,i})^\top \nabla_x f^\ell(Y_t^{i,i},u_i^n)
+\sigma_\circ(\bar{\sigma}_t^{i,n,\ell})^\top
\Bigg)\,\mathrm{d}t,
\\
\mathrm{d}\langle M^{i,n,\ell},M^{i,n,m}\rangle_t
&=
\Bigg(
\frac1{n^2}\sum_{j=1}^n
\nabla_x f^\ell(Y_t^{i,j},u_j^n)^\top
\Sigma_t^{i,j}(\Sigma_t^{i,j})^\top
\nabla_x f^m(Y_t^{i,j},u_j^n)
+\bar{\sigma}_t^{i,n,\ell}(\bar{\sigma}_t^{i,n,m})^\top
\Bigg)\,\mathrm{d}t.
\end{align*}
Therefore, for any \(F\in C^2(\R^d\times\R^q)\),
\begin{align*}
&F\big(Y_t^{i,i},\mathbf{M}_t^{i,n}\big)
=
F\big(Y_0^{i,i},\mathbf{M}_0^{i,n}\big)
+\int_0^t \nabla_x F\big(Y_s^{i,i},\mathbf{M}_s^{i,n}\big)\cdot \mathrm{d}Y_s^{i,i}
+\sum_{\ell=1}^q \int_0^t \partial_\ell F\big(Y_s^{i,i},\mathbf{M}_s^{i,n}\big)\,\mathrm{d}M_s^{i,n,\ell}
\\
&\quad
+\frac12\int_0^t
{\rm Tr}\!\Big[
\nabla_{xx}^2F\big(Y_s^{i,i},\mathbf{M}_s^{i,n}\big)\,\mathrm{d}\langle Y^{i,i}\rangle_s
\Big]
+\sum_{\ell=1}^q \int_0^t
{\rm Tr}\!\Big[
\nabla_{x\ell}^2F\big(Y_s^{i,i},\mathbf{M}_s^{i,n}\big)\,\mathrm{d}\langle Y^{i,i},M^{i,n,\ell}\rangle_s
\Big]
\\
&\quad
+\frac12\sum_{\ell,m=1}^q \int_0^t
\partial_{\ell m}^2F\big(Y_s^{i,i},\mathbf{M}_s^{i,n}\big)\,\mathrm{d}\langle M^{i,n,\ell},M^{i,n,m}\rangle_s.
\end{align*}

}
Set
\[
\mathbf M_t:=\big(\langle f^1,\mub_t\rangle,\dots,\langle f^q,\mub_t\rangle\big).
\]
For \(1\le \ell\le q\), define
\begin{align*}
A_s^\ell
&:=
\int_{\R^d\times[0,1]\times A}
\nabla_x f^\ell(y,v)\cdot b(s,y,e,a)\,
\overline{\Gamma}(s,y,v,\overline p_{\mub_s}(y,v))(\mathrm{d}a)\,
\mub_s(\mathrm{d}y,\mathrm{d}v)
\\
&\quad+
\frac12
\int_{\R^d\times[0,1]}
{\rm Tr}\!\left[
\nabla^2_{xx}f^\ell(y,v)
\big(\sigma(s,y)\sigma^\top(s,y)+\sigma_\circ\sigma_\circ^\top\big)
\right]\mub_s(\mathrm{d}y,\mathrm{d}v),
\\
C_s^\ell
&:=
\int_{\R^d\times[0,1]}
\sigma_\circ^\top\nabla_x f^\ell(y,v)\,
\mub_s(\mathrm{d}y,\mathrm{d}v).
\end{align*}
Then, using the convergence of
\(\P\circ(X,U,Z,\Gamma,\mub,\overline\Gamma)^{-1}\), the limiting dynamics, and the disintegration in
\eqref{eq:desintegration}, we obtain
\begin{align} \label{eq:limit_deviation}
&\E\left[Z_t F(X_t,U,\mathbf M_t)\right]
=
\E\left[F(X_0,U,\mathbf M_0)\right]
+
\E\Bigg[
\int_0^t
Z_s
\int_{\R_+\times A}
\nabla_x F(X_s,U,\mathbf M_s)\cdot b(s,X_s,e,a)\,
\Gamma_s(\mathrm{d}e,\mathrm{d}a)\,\mathrm{d}s
\Bigg]
\\
&\quad+
\frac12\E\Bigg[
\int_0^t
Z_s\,
{\rm Tr}\!\left[
\nabla^2_{xx}F(X_s,U,\mathbf M_s)
\big(\sigma(s,X_s)\sigma^\top(s,X_s)+\sigma_\circ\sigma_\circ^\top\big)
\right]\mathrm{d}s
\Bigg]
+
\sum_{\ell=1}^q
\E\Bigg[
\int_0^t
Z_s\,\partial_{\ell+1}F(X_s,U,\mathbf M_s)\,
A_s^\ell\,\mathrm{d}s
\Bigg] \nonumber
\\
&\quad+
\sum_{\ell=1}^q
\E\Bigg[
\int_0^t
Z_s\,
\nabla_x\partial_{\ell+1}F(X_s,U,\mathbf M_s)\cdot
\sigma_\circ C_s^\ell\,\mathrm{d}s
\Bigg]
+
\frac12\sum_{\ell,m=1}^q
\E\Bigg[
\int_0^t
Z_s\,\partial_{\ell+1,m+1}^2F(X_s,U,\mathbf M_s)\,
C_s^\ell\cdot C_s^m\,\mathrm{d}s
\Bigg]. \nonumber
\end{align}

\medskip
\textbf{Step 5 (limit dynamics and convergence of costs).}
We first record a useful marginal conditioning observation. From the identification obtained above for the limiting empirical flow, we know that
\[
\Lc(\mub_t)=\Lc(\nub_t),\qquad t\in[0,T].
\]
Moreover, since
\[
\E[Z_t\mid \mub_t]
=
\E\big[\E[Z_t\mid \widehat{\Gc}_T]\mid \mub_t\big]
=1,
\]
we may define a Borel transition kernel $\ell(t,m)(\mathrm{d}y,\mathrm{d}v)\in\Pc(\R^d\times[0,1])$
such that
\[
\ell(t,\mub_t)(\mathrm{d}y,\mathrm{d}v)
=
\E\big[Z_t\,\delta_{(X_t,U)}(\mathrm{d}y,\mathrm{d}v)\mid \mub_t\big].
\]
Consequently, for every bounded Borel map $\Hc:\R^d\times[0,1]\times\Pc(\R^d\times[0,1])\to\R,$
we have
\[
\E\big[Z_t\,\Hc(X_t,U,\mub_t)\big]
=
\E\left[
\int_{\R^d\times[0,1]}
\Hc(y,v,\mub_t)\,\ell(t,\mub_t)(\mathrm{d}y,\mathrm{d}v)
\right].
\]
Since \(\Lc(\mub_t)=\Lc(\nub_t)\), the last expression can equivalently be written as
\[
\E\left[
\int_{\R^d\times[0,1]}
\Hc(y,v,\nub_t)\,\ell(t,\nub_t)(\mathrm{d}y,\mathrm{d}v)
\right].
\]
Thus, in the marginal identities derived from \eqref{eq:limit_deviation}, the occurrence of \(\mub_t\) may be replaced by \(\nub_t\). Notice that this is only a marginal replacement at each fixed time \(t\), and not an identification of the full processes \(\mub\) and \(\nub\).

We now define the limiting deviating dynamics. Let \((Y,V)\) solve
\[
(Y_0,V)\perp W\perp(\nub,W^\circ),
\qquad
\Lc(Y_0,V)=\Lc(X_0,U),
\]
and
\[
\mathrm{d}Y_t
=
\int_A b\big(t,Y_t,\overline p_{\nub_t}(Y_t,V),a\big)\,
\Lambda^\beta(t,Y_t,V,\nub_t)(\mathrm{d}a)\,\mathrm{d}t
+\sigma(t,Y_t)\,\mathrm{d}W_t
+\sigma_\circ\,\mathrm{d}W^\circ_t.
\]
Using \eqref{eq:limit_deviation}, together with the preceding marginal conditioning argument, and applying the superposition principle with common noise \cite[Theorems 1.3 and 1.5]{Lacker-Shkolnikov-Zhang_2020} to the enlarged state variable \((Y,V)\), we obtain that, for every \(t\in[0,T]\),
\[
\Lc(Y_t,V,\nub_t)(\mathrm{d}y,\mathrm{d}v,\mathrm{d}m)
=
\E\Big[
Z_t\,\delta_{(X_t,U)}(\mathrm{d}y,\mathrm{d}v)\,
\delta_{\mub_t}(\mathrm{d}m)
\Big].
\]
Again, this identity is to be understood at the level of the time marginal \(t\), and not as an equality in law of the full trajectories.
Equivalently, setting
\[
\mub_t^\beta:=\Lc(Y_t,V\mid \nub,W^\circ),
\]
we have
\[
\mub_t^\beta=\Lc(Y_t,V\mid \Gc_t),
\qquad
\Gc_t:=\sigma\{\nub_s,\sigma_\circ W^\circ_s:0\le s\le t\},
\]
and \(\Lc(\mub_t^\beta)\) coincides with the limit of the corresponding subsequence of
\(\Lc(\mub_t^{n,\betab^n})\). Hence the limiting dynamics are precisely those generated by the deviating relaxed control
\(\Lambda^\beta\).

\medskip

Finally, let \(F\) be bounded and continuous. Using again
\[
\Lc^{\P^{i,n}}(\Xbb^n)=\Lc(\Ybb^i),
\]
together with the convergence of \(\Qr^n\) toward \(\Qr\), the disintegration of \(\Gamma\), and the identification of the interaction variable, we obtain
\begin{align*}
&\lim_{n\to\infty}\frac1n\sum_{i=1}^n
\E\Bigg[\int_0^T
F\Big(
t,Y^{i,i}_t,(V_n*\mub^{i,n}_t)(Y^{i,i}_t),
\mub^{i,n}_t,\beta^{i,n}(t,\Ybb^i)
\Big)\,\mathrm{d}t\Bigg]
=
\E\Bigg[
\int_0^T\int_{\R_+\times A}
Z_t\,F(t,X_t,e,\mub_t,a)\,
\Gamma_t(\mathrm{d}e,\mathrm{d}a)\,\mathrm{d}t
\Bigg]
\\
&=
\E\Bigg[
\int_0^T\int_A
Z_t\,F\big(t,X_t,\overline p_{\mub_t}(X_t,U),\mub_t,a\big)\,
\Lambda^\beta(t,X_t,U,\mub_t)(\mathrm{d}a)\,\mathrm{d}t
\Bigg]
\\
&=
\E\Bigg[
\int_0^T\int_A
F\big(t,Y_t,\overline p_{\nub_t}(Y_t,V),\nub_t,a\big)\,
\Lambda^\beta(t,Y_t,V,\nub_t)(\mathrm{d}a)\,\mathrm{d}t
\Bigg].
\end{align*}
The last equality follows from the identity of the tilted law
\[
\E\big[
Z_t\,\delta_{(X_t,U,\mub_t)}
\big]
=
\Lc(Y_t,V,\nub_t).
\]
This is exactly the desired convergence of the deviating costs and concludes the proof.
\end{proof}

\medskip
We now specialize \Cref{weak_deviating} to specific feedback  deviations. Let
\[
\beta:[0,T]\times\R^d\times[0,1]\times\Pc(\R^d\times[0,1])\to A
\]
be Lipschitz continuous, and define the finite--player deviation by
\[
\beta^{i,n}(t,x_1,\dots,x_n)
:=
\beta(t,x_i,u_i^n,\overline m^n),
\qquad
\overline m^n:=\frac1n\sum_{j=1}^n\delta_{(x_j,u_j^n)}.
\]
Let \(X^\beta\) be the solution of
\begin{align*}
    \mathrm{d}X^\beta_t
    &=
    b\big(t,X^\beta_t,\overline p_{\nub_t}(X^\beta_t,U),
    \beta(t,X^\beta_t,U,\nub_t)\big)\,\mathrm{d}t
    +\sigma(t,X^\beta_t)\,\mathrm{d}W_t
    +\sigma_\circ\,\mathrm{d}W^\circ_t,
    \\
    \mub^\beta_t&:=\Lc(X^\beta_t,U\mid \nub,W^\circ).
\end{align*}

\begin{corollary}\label{weak_deviating_corollary}
Assume that, along a subsequence \((n_k)_{k\ge1}\),
\[
\Lc(\mub^{n_k}_t)\xrightarrow[k\to\infty]{}\Lc(\nub_t),
\qquad t\in[0,T],
\]
for some continuous process \(\mub\). Then, for every \(t\in[0,T]\),
\[
\Lc(\mub^{n_k,\beta^{n_k}}_t)
\xrightarrow[k\to\infty]{}
\Lc(\mub^\beta_t).
\]
Moreover, for every bounded continuous function \(F\),
\begin{align*}
&\lim_{k\to\infty}\frac1{n_k}\sum_{i=1}^{n_k}
\E\Bigg[\int_0^T
F\Big(
t,Y^{i,i}_t,
\Vh_{i,t}^{n_k}(Y^{i,i}_t),
\mub^{i,n_k}_t,
\beta^{i,n_k}(t,\Ybb^i)
\Big)\,\mathrm{d}t\Bigg]
=
\E\Bigg[\int_0^T
F\Big(
t,X^\beta_t,
\overline p_{\nub_t}(X^\beta_t,U),
\nub_t,
\beta(t,X^\beta_t,U,\nub_t)
\Big)\,\mathrm{d}t\Bigg].
\end{align*}
\end{corollary}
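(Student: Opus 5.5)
The corollary is a specialization of \Cref{weak_deviating} to the Lipschitz feedback deviation $\beta^{i,n}(t,\cdot)=\beta(t,x_i,u^n_i,\overline m^n)$. The plan is to apply that proposition and then show that the abstract relaxed kernel $\Lambda^\beta$ it produces equals $\delta_{\beta(t,y,v,m)}$. Once this is known, the SDE in \Cref{eq:deviating_limit} becomes the strict equation defining $X^\beta$, the conditional law $\mub^\beta$ agrees with the one in the statement, and both conclusions follow directly.

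First I apply \Cref{weak_deviating} along the given subsequence. After a further extraction, it yields a limit $\Qr=\P\circ(X,U,Z,\Gamma,\mub,\overline\Gamma,W^\circ)^{-1}$ and a kernel $\Lambda^\beta$. The kernel is defined through the disintegration \eqref{eq:desintegration} of
\[
\Lambda(t)=\E\bigl[Z_t\,\delta_{(X_t,U)}\,\delta_{\mub_t}\,\Gamma_t\bigr].
\]
The key observation is that under the Girsanov change of measure, the deviating player's control along $\Xbb^n$ is $a=\beta(t,X^{i,n}_t,u^n_i,\mub^n_t)$, where $\mub^n_t$ is the empirical measure appearing in $\Qr^n$. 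Since $\beta$ is continuous, the map
\[
(x,u,\gamma,m)\mapsto \int_0^T\!\!\int \varphi(t)\,\bigl|a-\beta(t,x(t),u,m_t)\bigr|\wedge 1\;\gamma(\mathrm{d}t,\mathrm{d}e,\mathrm{d}a)
\]
is continuous and bounded. I would multiply it by the weight $Z$, which is allowed by the polynomial-growth extension of convergence in Step 2 of \Cref{weak_deviating}. The expression vanishes identically at level $n$, so in the limit it gives $\Gamma_t(\mathrm{d}e,\mathrm{d}a)=\Gamma_t(\mathrm{d}e,A)\,\delta_{\beta(t,X_t,U,\mub_t)}(\mathrm{d}a)$ for $Z_t\,\mathrm{d}t\,\mathrm{d}\P$-a.e. point. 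It then follows from the disintegration that $\widehat\Gamma(t,y,v,e,m)=\delta_{\beta(t,y,v,m)}$ for $\Lambda(t)$-a.e. argument, and hence $\Lambda^\beta(t,y,v,m)=\delta_{\beta(t,y,v,m)}$ a.e. with respect to the tilted marginal $\E[Z_t\,\delta_{(X_t,U,\mub_t)}]=\Lc(Y_t,V,\nub_t)$. That null-set qualifier is exactly what is needed: the limiting deviating dynamics only evaluate the kernel on the support of this law.

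Next I substitute this identification into Step 5 of \Cref{weak_deviating}. This gives $\Lc(\mub^{n_k,\beta^{n_k}}_t)\to\Lc(\mub^\beta_t)$, where $\mub^\beta$ is built from the strict SDE with $\beta(t,Y_t,V,\nub_t)$. For the cost convergence, I take the integrand $F(t,x,e,m,\beta(t,x,u,m))$; it is bounded and continuous, so the final chain of equalities in \Cref{weak_deviating} yields the stated limit. The limit does not depend on the extracted subsequence, because it is determined by the law of $(\nub,W^\circ)$ and by weak uniqueness of the strict equation. That equation has Lipschitz $\beta$ and a density-dependent drift whose Lipschitz dependence comes from \Cref{assum:main_cond}, and \Cref{prop:uniqueness_FP} controls the conditional flow. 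Hence the whole subsequence $(n_k)$ converges.

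The main obstacle is that the empirical measure entering $\beta$ is $\overline m^n$ evaluated along the deviated system $\Ybb^i$, whereas the tilted limit is expressed in terms of $\mub$, the limit of the non-deviated empirical measures. Under $\P^{i,n}$ the deviated empirical measure has the law of $\mub^n$ under $\P$, so these match. I would nevertheless check carefully that the passage from $\mub$ to $\nub$ is legitimate jointly in $(X_t,U,\mub_t)$ and not only in the marginal of $\mub_t$. For that I would use the identity $\E[Z_t\,\delta_{(X_t,U,\mub_t)}]=\Lc(Y_t,V,\nub_t)$ established in Step 5 of \Cref{weak_deviating}.
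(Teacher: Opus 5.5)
Your proposal is correct and follows the paper's route. You specialize \Cref{weak_deviating} and identify its limiting kernel as $\delta_{\beta(t,y,v,m)}$; the paper asserts this in one line from the continuity of $\beta$, and you justify it more carefully via the test functional $|a-\beta(t,x(t),u,m_t)|\wedge 1$, which vanishes at level $n$, together with the support of the tilted marginal.

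Two small corrections. First, the whole-subsequence argument only needs weak uniqueness of the SDE with $\nub$ frozen (bounded measurable drift, non-degenerate Lipschitz $\sigma$, via Girsanov), not \Cref{prop:uniqueness_FP}, which concerns the self-consistent McKean--Vlasov problem. Second, the Girsanov identity you use should read that $\mub^n$ under $\P^{i,n}$ has the law of the deviated empirical measure under $\P$, not the other way round.
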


\begin{proof}
This is a direct consequence of \Cref{weak_deviating}. Indeed, the Lipschitz continuity of \(\beta\) implies that, along any convergent realization of the empirical measures,
\[
\beta^{i,n_k}(t,\Ybb^i)
=
\beta(t,Y^{i,i}_t,u_i^{n_k},\mub^{i,n_k}_t)
\]
will be associated in the weak convergence limit to
\[
\beta(t,X^\beta_t,U,\nub_t).
\]
The limiting relaxed control kernel in \Cref{weak_deviating} is therefore the Dirac kernel
\[
\Lambda^\beta(t,x,u,\overline{m})(\mathrm{d}a)
=
\delta_{\beta(t,x,u,\overline{m})}(\mathrm{d}a).
\]
Substituting this kernel into the conclusion of \Cref{weak_deviating} gives the stated convergence of the deviating empirical flows and of the associated averaged costs.
\end{proof}

\subsubsection{From approximate Nash equilibria to relaxed MFG solution (proof of {\rm \Cref{thm:from_n_to_MFG}})}

Let $(\alphab^n=(\alpha^{1,n},\cdots,\alpha^{n,n}))_{n \ge 1}$ be a sequence s.t. $\alphab^n$ is an $\delta_n$--Nash equilibrium for each $n \ge 1$ where $(\delta_n)_{n \ge 1}$ is a non--negative sequence with $\lim_{n \to \infty} \delta_n=0.$ We recall that we denote by $(\mub^n_t)_{t \in [0,T]}$ the empirical distribution of states and labels associated to $\alphab^n$ for each $n \ge 1$.

\begin{proposition}
    The sequence $(\Lc(\mub^n))_{n \ge 1}$ is relatively compact for the weak topology. For each convergent sub--sequence $(\Lc(\mub^{n_k}))_{n_k \ge 1}$, there exists a relaxed MFG solution $\mub=(\mub_t)_{t \in [0,T]}$ associated to $\Lambda \in {\M}$ s.t. for each $t \in [0,T]$,
    \begin{align*}
        \lim_{k \to \infty} \Lc(\mub^{n_k}_t)=\Lc(\mub_t).
    \end{align*}
\end{proposition}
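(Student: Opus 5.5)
Compactness and consistency come directly from \Cref{prop:weak_poc}, so the real work is optimality. Applying \Cref{prop:weak_poc} to the equilibrium profiles $\alphab^n$ gives relative compactness of $(\Lc(\mub^n))_{n\ge1}$. Along a convergent subsequence $(n_k)$ it also produces a kernel $\Lambda\in\M$ and a triple $(X,U,\mub)$ solving \eqref{eq:mc_kean_representative} with $\Lc(\mub^{n_k}_t)\to\Lc(\mub_t)=\Lc(\Lc(X_t,U\mid\Gc_t))$. Taking $F=L$ and $F=g$ in its convergence statement gives
\[
\frac{1}{n_k}\sum_{i=1}^{n_k}J^{n_k}_i(\alphab^{n_k})\to J_{\mub}(\Lambda).
\]
The terminal term needs the same argument at $t=T$, with the environment coordinate identified as in Step~6 of that proof.

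For optimality, fix a competitor $\Lambda^\beta\in\M$. I would first treat a strict control $\beta:[0,T]\times\R^d\times[0,1]\times\Pc(\R^d\times[0,1])\to A$ that is Lipschitz. Each player $i$ deviates to $\beta^{i,n}(t,x_1,\dots,x_n)=\beta(t,x_i,u^n_i,\overline m^n)$. Averaging the $\boldsymbol{\delta}^n$--Nash inequalities over $i$ gives
\[
\frac1n\sum_{i=1}^n J^n_i(\alphab^{n,-i},\beta^{i,n})-\frac1n\sum_{i=1}^n\delta^n_i\le\frac1n\sum_{i=1}^nJ^n_i(\alphab^n).
\]
Along $(n_k)$, \Cref{weak_deviating_corollary} with $\nub=\mub$ identifies the limit of the left-hand side as $J_{\mub}(\delta_\beta)$. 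This uses that only the time marginals $\Lc(\mub_t)$ matter in the limiting payoff, and that the limiting deviating state is driven by the same frozen flow. Since $\frac1n\sum_i\delta^n_i\to0$, we get $J_{\mub}(\delta_\beta)\le J_{\mub}(\Lambda)$ for all such Lipschitz $\beta$.

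The main obstacle is extending this inequality to arbitrary relaxed Borel kernels $\Lambda^\beta\in\M$. Two approximations are needed. First, every relaxed kernel is approximated by strict ones, by chattering: randomize over a finite grid of actions and replace the randomization with fast switching in time. Second, Borel kernels are approximated by continuous (then Lipschitz) ones in the weak topology of \Cref{rm:topology_M}, with reference measure $\ell(x)\,\mathrm{d}x\,\mathrm{d}u\,\mathrm{d}t\,\Lc(\mub_t)(\mathrm{d}m)$. For both I need continuity of $\Lambda^\beta\mapsto J_{\mub}(\Lambda^\beta)$ along such approximations. My plan is to get it from the stability argument of \Cref{lemma:continuity} and \Cref{prop:cong_graphon} applied conditionally on the common noise, using the frozen-path translation of the existence proof.

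Concretely: the conditional law given $(\mub,W^\circ)$ has a density, and by the uniform Hölder bounds of \Cref{lemma:estimations} these densities are tight. Weak convergence of kernels against $\mathrm{d}x\,\mathrm{d}u\,\mathrm{d}t$ then transfers to weak convergence against the actual state law. This yields the payoff convergence; the continuity of $(b,L)$ in $a$ and the Lipschitz continuity of $b$ in $(p,r)$ control the error terms. Passing to the limit gives $J_{\mub}(\Lambda^\beta)\le J_{\mub}(\Lambda)$ for every $\Lambda^\beta\in\M$, so $\mub$ is a relaxed MFG solution associated with $\Lambda$.
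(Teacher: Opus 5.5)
Your proposal follows the paper's proof essentially step for step: compactness, consistency and convergence of the averaged payoffs come from \Cref{prop:weak_poc}; optimality is first obtained against Lipschitz strict deviations $\beta^{i,n}(t,\xbb)=\beta(t,x_i,u^n_i,\overline m^n)$ by averaging the Nash inequalities and passing to the limit with \Cref{weak_deviating_corollary} (with $\nub=\mub$); and a final density argument, which the paper merely asserts via \Cref{prop:cong_graphon} and \Cref{rm:commonnoise_approx}, is fleshed out in your version through chattering, regularization, and the Young-measure continuity of \Cref{lemma:continuity} applied pathwise in the common noise. One caveat you share with the paper: approximating Borel $A$-valued feedbacks by continuous (Lipschitz) $A$-valued ones tacitly needs a structural condition such as convexity of $A$ (if $A$ is finite, continuous $A$-valued maps on the connected set $[0,T]\times\R^d\times[0,1]\times\Pc(\R^d\times[0,1])$ are constant), so that step is only as complete as the paper's own.
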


\begin{proof}
    The relative compactness is straightforward. Let us take the limit $\Lc(\mub)$ of a convergent sub--sequence (not relabeled).
    By using \Cref{prop:weak_poc}, $\mub$ is a solution of a Fokker--Planck equation associated to a certain $\Lambda^\alpha \in {\M}$ and 
    \begin{align*}
        \lim_{n \to \infty} \frac{1}{n}\sum_{i=1}^nJ_i^n(\alphab^n)
        =
        J_{\mub}(\Lambda^\alpha).
    \end{align*}
    Let us verify the optimality condition. Let $\beta:[0,T] \x \R^d \x [0,1] \x \Pc(\R^d \x [0,1])  \to A$ be a Lipschitz map and consider $(X^\beta,U)$ the weak solution of :
\begin{align*}
    \mathrm{d}X^\beta_t
    = b\big(t,X^\beta_t,\overline{p}_{\mub_t}(X^\beta_t,U),\beta(t,X^\beta_t,U,\mub_t)\big)\,
    (\mathrm{d}a)\,\mathrm{d}t
    +\sigma(t,X^\beta_t)\,\mathrm{d}W_t + \sigma_\circ\,\mathrm{d}W^\circ_t,
    \qquad
    \mub^\beta_t=\Lc(X^\beta_t,U\mid \Gc_t),
\end{align*}
We set $\beta^{i,n}(t,x_1,\dots,x_n):=\beta(t,x_i,u^n_i,\overline{m}^n)$ where $\overline{m}^n:=\frac{1}{n} \sum_{i=1}^n \delta_{(x_i,u^n_i)}.$
By \Cref{weak_deviating} (in particular \Cref{weak_deviating_corollary}), with $\Lambda^\beta(t,x,u,m)(\mathrm{d}a):=\delta_{\beta(t,x,u,m)}(\mathrm{d}a)$, we have 
\begin{align*}
    \lim_{n \to \infty} \frac{1}{n} \sum_{i=1}^n J_i^n \left( (\alphab^n)^{-i},\beta^{i,n} \right) =J_{\mub}(\Lambda^\beta).
\end{align*}
Therefore,
\begin{align*}
    J_{\mub}(\Lambda^\beta)=\lim_{n \to \infty} \frac{1}{n} \sum_{i=1}^n J_i^n \left( (\alphab^n)^{-i},\beta^{i,n} \right) \le \lim_{n \to \infty} \frac{1}{n} \sum_{i=1}^n J_i^n \left( \alphab^n \right) = J_{\mub}(\Lambda^\alpha).
\end{align*}
This is true for any $\beta$ Lipschitz. Notice that the supremum in $\sup_{\Lambda^{\beta'} \in \M} J_{\mub}(\beta')$ can be taken over $\Lambda^{\beta'}(t,x,u,m)(\mathrm{d}a)=\delta_{\beta(t,x,u,m)}(\mathrm{d}a)$ with $\beta$ Lipchitz {\color{black}(see \Cref{prop:cong_graphon} and \Cref{rm:commonnoise_approx})}. This is enough to deduce the optimality and conclude.
\end{proof}

\subsubsection{From MFG solution to approximate Nash equilibria  (proof of {\rm \Cref{thm:from_MFG_to_n}})}

Let $\mub$ be a strict MFG solution associated to a control $\alpha:[0,T] \x \R^d \x [0,1] \x \Pc(\R^d \x [0,1]) \to A$. In particular, we have $\overline{\mu}_t=\Lc(X_t,U \mid \Gc_t)$ with $p_{\mub_t}(\cdot,\cdot)$ the density of $\overline{\mu}_t$, $\overline{p}_{\mub_t}(x,u):=\int_0^1 \Gr(u,v) p_{\mub_t}(x,v)\,\mathrm{d}v$ and
\begin{align*}
    \mathrm{d}X_t
    =
    b \left(t, X_t, \overline{p}_{\mub_t}(X_t,U),\Rr_{\mub_t}(U),\alpha(t,X_t,U,\mub_t) \right)\mathrm{d}t + \sigma(t,X_t)\,\mathrm{d}W_t + \sigma_\circ \,\mathrm{d}W^\circ_t.
\end{align*}
We consider a sequence of Lipschitz maps $(\alpha^\ell:[0,T] \x \R^d \x [0,1] \x \Pc(\R^d \x [0,1]) \to A)_{\ell \ge 1}$ such that $\lim_{\ell \to \infty} \alpha^\ell=\alpha$ a.e.
With $(\mub,W^\circ) \perp (X^i_0,W^i)_{i \ge 1}$, we introduce $\Xbb^{\ell,n}=(X^{\ell,1,n},\cdots,X^{\ell,n,n})$,
\begin{align*}
    \mathrm{d}X^{\ell,i,n}_t
    =
    b \left(t, X^{\ell,i,n}_t, M^{\ell,i,n}_t,\Rr^{\ell,n}_{i,t},\alpha^\ell(t,X^{\ell,i,n}_t,u^n_i,\mub_t) \right)\mathrm{d}t + \sigma(t,X^{\ell,i,n}_t)\,\mathrm{d}W^i_t + \sigma_\circ \,\mathrm{d}W^\circ_t,\, M^{\ell,i,n}_t:=\frac{1}{n} \sum_{j=1}^n \xi^n_{ij}V_n(X^{\ell,i,n}_t-X^{\ell,j,n}_t).
\end{align*}
We set $\alpha^{\ell,i,n}_{\mub}(t,x_1,\cdots, x_n):=\alpha^\ell(t,x_i,u^n_i,\mub)$.
Notice that, $(t,m,x_1,u^n_1,\dots,x_n,u^n_n) \mapsto \alpha^{\ell,i,n}_{m}(t,x_1,u^n_1,\cdots,x_n,u^n_n) $ is Borel measurable and a.e. $\om$, 
$$
    \alphab^{\ell,n}_{\mu(\om)}:=(\alpha^{\ell,1,n}_{\mub(\om)},\dots,\alpha^{\ell,n,n}_{\mub(\om)}) \in \Ac_n^n
$$
and 
\begin{align*}
    \Lc\left(X^{\ell,1,n}-\sigma_\circ\,W^\circ,\cdots, X^{\ell,n,n}- \sigma_\circ\,W^\circ \mid \Gc_T \right)(\om) = \Lc\left(X^{1,n,\,\alphab^{\ell,n}_{\mub(\om)}} - \sigma_\circ\,W^\circ(\om),\cdots, X^{n,n,\,\alphab^{\ell,n}_{\mub(\om)}} - \sigma_\circ\,W^\circ(\om) \right)
\end{align*}

\medskip
Let $\mub^{\ell,n}_t:=\frac{1}{n} \sum_{i=1}^n \delta_{(X^{\ell,i,n}_t,u^n_i)}$. By applying \Cref{corollary:weak_poc} with $\overline{m}_t=\mub_t(\om)$ and translated process $\Big(X^{1,n,\,\alphab^{\ell,n}_{\mub(\om)}} - \sigma_\circ\,W^\circ(\om),\cdots, X^{n,n,\,\alphab^{\ell,n}_{\mub(\om)}} - \sigma_\circ\,W^\circ(\om) \Big)$, we have for each $t \in [0,T]$, a.e. $\om$, $\lim_{\ell \to \infty}\lim_{n \to \infty} \Lc(\mub^{\ell,n}_t \mid \Gc_T)(\om)=\delta_{\overline{\mu}_t(\om)}$. In addition, for any continuous map $F$,
\begin{align} \label{eq:conv_mfg_proof}
\lim_{\ell \to \infty}\lim_{n\to\infty}\frac{1}{n}\sum_{i=1}^{n}\E\Bigg[\int_0^T
F\Big(t,X^{\ell,i,n}_t,M^{\ell,i,n}_t,\Rr^{\ell,n}_{i,t}, \alpha^{\ell,i,n}_{\mub}(t,\Xbb^{\ell,n})\Big)\,\mathrm{d}t\Bigg]
=
\E\Bigg[\int_{0}^T F\big(t,X_t,\overline{p}_{\mub_t}(X_t,U),\Rr_{\overline{\mu}_t}(U), \alpha(t,X_t,U,\mub_t)\big)\,\mathrm{d}t\Bigg].
\end{align}
Let us fix $\ell \ge 1$ and $n \ge 1$ for a moment. We are now treating $\Xbb^n$ as a multi--dimensional non--degenerate SDE. 
We will now apply some approximation result for non--degenerate SDE. Let $G(x_1,\cdots,x_n)$ be a smooth density on $(\R^d)^n$. There exists a sequence of Lipschitz maps $(\gamma_q:[0,T] \x (\R^d)^n \to \Pc(\R^d \x [0,1]))_{q \ge 1}$ s.t.
\begin{align*}
    \lim_{q \to \infty}\delta_{\gamma_q(t,\xbb)}(\mathrm{d}m)\,G(\xbb)\,\mathrm{d}\xbb = \Lc(\mub_t \mid \Xbb_t^n=\xbb)(\mathrm{d}m) \,G(\xbb)\, \mathrm{d}\xbb,\quad \xbb=(x_1,\dots,x_n).
\end{align*}
We introduce $M^{q,\ell,i,n}_t:=\frac{1}{n} \sum_{j=1}^n \xi^n_{ij}V_n(X^{q,\ell,i,n}_t-X^{\ell,j,n}_t)$, $\Rr^{q,\ell,n}_{i,t}:=\frac{1}{n} \sum_{j=1}^n \delta_{(\xi^n_{i,j},\,X^{q,\ell,j,n}_t)}$ and
\begin{align*}
    \mathrm{d}X^{q,\ell,i,n}_t
    =
    b \left(t, X^{q,\ell,i,n}_t, M^{q,\ell,i,n}_t,\Rr^{q,\ell,n}_{i,t},\alpha^\ell(t,X^{q,\ell,i,n}_t,u^n_i,\gamma_\ell(t,\Xbb^{q,\ell,n}_t)) \right)\mathrm{d}t + \sigma(t,X^{q,\ell,i,n}_t)\,\mathrm{d}W^i_t + \sigma_\circ \,\mathrm{d}W^\circ_t.
\end{align*}
By \cite[Proposition A.5.]{closed-loop-MFG_MDF}, for each $t \in [0,T]$, for the weak topology,
\begin{align} \label{eq:conv_markovian}
    \lim_{q \to \infty} \Lc(\Xbb^{q,\ell,n}_t)=\Lc(\Xbb^{\ell,n}_t).
\end{align}

Let us set $\alpha
^{q,\ell,i,n}(t,x_1,\cdots,x_n):=\alpha^\ell(t,x_i,u^n_i,\gamma_\ell(t,\xbb))$, $\alphab^{q,\ell,n}=(\alpha^{q,\ell,1,n},\cdots,\alpha^{q,\ell,n,n})$,
\begin{align*}
    \varepsilon_{q,\ell,i,n}
    := 
    \sup_{\beta} J_i^n \left( (\alphab^{q,\ell,n})^{-i}, \beta \right) - J^n_i(\alphab^{q,\ell,n}).
\end{align*}
We consider a sequence $(\beta^{q,\ell,i,n})_{1 \le i \le n}$ verifying $\varepsilon_{q,\ell,i,n} - \frac{1}{2^n} \le J_i^n \left( (\alphab^{q,\ell,n})^{-i}, \beta^{q,\ell,i,n} \right) - J^n_i(\alphab^{q,\ell,n})$, for each $n \ge 1$. Let us take the sub--sequence $(n_k, \ell_{n_k},q_{n_k})_{k \ge 1}$ realizing $\limsup_{\ell \to \infty}\limsup_{n \to \infty} \limsup_{q \to \infty} \frac{1}{n} \sum_{i=1}^n \varepsilon_{q,\ell,i,n}$. By \Cref{eq:conv_mfg_proof} (i.e. \Cref{corollary:weak_poc}) and \Cref{eq:conv_markovian} for each $t \in [0,T]$, with $\mub^{q,\ell,n}_t:=\frac{1}{n} \sum_{j=1}^n \delta_{(X^{q,\ell,j,n}_t,u^n_i)}$,
$$
    \lim_{\ell \to \infty}\lim_{n \to \infty} \lim_{q \to \infty} \Lc(\mub^{q,\ell,n}_t)=\lim_{\ell \to \infty}\lim_{n \to \infty} \Lc(\mub^{\ell,n}_t)=\Lc(\mub_t),
$$
By the previous limit  and  \Cref{weak_deviating}, there exists $\Lambda^\beta:[0,T]  \x \R^d \x [0,1] \x \Pc(\R^d \x [0,1]) \to \Pc(A)$ s.t.
\begin{align*}
    &\lim_{k \to \infty}\frac{1}{n_k} \sum_{i=1}^{n_k}J_i^{n_k} \left( (\alphab^{q_{n_k},\ell_{n_k},n_k})^{-i}, \beta^{q_{n_k},\ell_{n_k},i,n_k} \right)
    \\
    &=
    \E \left[ g \left( X^\beta_T, \Rr_{\mub_T}(U) \right) + \int_0^T L \left( t, X^\beta_t, \overline{p}_{\mub_t}(X^\beta_t,U), \Rr_{\mub_t}(U), a \right) \Lambda^\beta(t,X^\beta_t,U,\overline{\mu}_t) \mathrm{d}t \right] =J_{\overline{\mu}}(\Lambda^\beta).
\end{align*}
Since $\mub$ is a MFG solution, we deduce that $\limsup_{\ell \to \infty} \limsup_{n \to \infty} \limsup_{q \to \infty} \frac{1}{n} \sum_{i=1}^n \varepsilon_{q,\ell,i,n} = 0$. We deduce that $\alphab^{q,\ell,n}$ is an $(\varepsilon_{q,\ell,1,n},\dots,\varepsilon_{q,\ell,n,n})$--Nash equilibrium with $\lim_{\ell \to \infty} \lim_{n \to \infty} \lim_{q \to \infty} \frac{1}{n} \sum_{i=1}^n \varepsilon_{q,\ell,i,n} = 0$.

\begin{appendix}

\section{Some technical results}

\begin{lemma} \label{lemma:proba_equality}
Let $\mu$ and $\nu$ be two probability measures on $\R_+$ such that there exists $K_\nu>0$ with
\[
\nu([0,K_\nu])=1,
\]
and
\[
\int_{\R_+} x^k\,\nu(\mathrm{d}x)=\int_{\R_+} x^k\,\mu(\mathrm{d}x),\qquad \forall k\in\{1,2,\dots\}.
\]
Then $\mu=\nu$.
\end{lemma}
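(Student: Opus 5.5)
The plan is to deduce $\mu=\nu$ by first showing that $\mu$ is also supported in $[0,K_\nu]$, and then using density of polynomials in $C([0,K_\nu])$.

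For the support, I would look at the even moments. For every integer $k\ge1$,
\[
\int_{\R_+} x^{2k}\,\mu(\mathrm{d}x)=\int_{[0,K_\nu]} x^{2k}\,\nu(\mathrm{d}x)\le K_\nu^{2k}.
\]
Now fix $\varepsilon>0$. Markov's inequality gives
\[
\mu\bigl((K_\nu+\varepsilon,\infty)\bigr)\le \frac{K_\nu^{2k}}{(K_\nu+\varepsilon)^{2k}}\xrightarrow[k\to\infty]{}0 .
\]
Letting $\varepsilon\downarrow0$ along a countable sequence then yields $\mu([0,K_\nu])=1$. Since the measures live on $\R_+$, no negative part needs to be handled.

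For the identification, both $\mu$ and $\nu$ are now probability measures on the compact interval $[0,K_\nu]$. Their moments agree for all $k\ge1$, and trivially for $k=0$ because both have total mass one. By linearity, $\int P\,\mathrm{d}\mu=\int P\,\mathrm{d}\nu$ for every polynomial $P$.

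By the Weierstrass approximation theorem, polynomials are uniformly dense in $C([0,K_\nu])$. Hence $\int f\,\mathrm{d}\mu=\int f\,\mathrm{d}\nu$ for all continuous $f$ on $[0,K_\nu]$. The Riesz representation theorem, or uniqueness of finite Borel measures determined by integrals of continuous functions on a compact metric space, then gives $\mu=\nu$.

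The only mildly delicate point is the first step, transferring the compact support from $\nu$ to $\mu$. The even-moment Markov bound above handles it, so I do not expect a genuine obstacle.
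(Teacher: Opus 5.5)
Your proof is correct and follows the paper's two-step architecture: first show $\mu([0,K_\nu])=1$, then conclude by Weierstrass density of polynomials in $C([0,K_\nu])$. The support step, however, is done by a genuinely different and more elementary device.

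The paper approximates the indicator $\mathbf{1}_{[K_\nu+\varepsilon,M]}$ by continuous functions $f^k$ supported in $[0,M+\varepsilon]$. It then approximates each $f^k$ uniformly on $[0,M+\varepsilon]$ by polynomials $p^{n,k}$ and transfers $\int p^{n,k}\,\mathrm{d}\nu=\int p^{n,k}\,\mathrm{d}\mu$. Finally it discards the tail $\int_{(M+\varepsilon,\infty)}p^{n,k}\,\mathrm{d}\mu$ by asserting that $p^{n,k}\ge 0$. That nonnegativity off the interval is not supplied by Weierstrass approximation. Repairing it would take an extra trick, for instance approximating $\sqrt{f^k}$ by a polynomial $q$ and using $q^2$. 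The dominated-convergence steps also need $f^k\to\mathbf{1}_{[K_\nu+\varepsilon,M]}$ pointwise, not merely Lebesgue-a.e., since they integrate against $\mu$ and $\nu$.

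Your Markov bound $\mu\bigl((K_\nu+\varepsilon,\infty)\bigr)\le \bigl(K_\nu/(K_\nu+\varepsilon)\bigr)^{2k}$ uses only the moment identity and the positivity of the monomials, so none of these approximation issues arise. Since everything lives on $\R_+$, the full sequence of moments $k$ (not just the even ones) would work equally well. For this statement your route is shorter and fully rigorous, and the paper's test-function approximation buys nothing extra.
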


\begin{proof}
Fix $\varepsilon>0$ and $M>K_\nu$, and define
\[
g(x):=\mathbf{1}_{\{x\ge K_\nu+\varepsilon\}}\mathbf{1}_{\{x\le M\}},\qquad x\in\R_+.
\]
\textbf{Step 1 (continuous approximation of $g$).}
There exists a sequence of nonnegative continuous functions with compact support in $[0,M+\varepsilon]$,
$(f^k)_{k\ge1}\subset C_c(\R_+)$, such that
\[
0\le f^k\le 1,\qquad {\rm supp}(f^k)\subset[0,M+\varepsilon],\qquad f^k(x)\xrightarrow[k\to\infty]{} g(x)\quad\text{for a.e. }x\in\R_+.
\]
Moreover, $\sup_{k\ge1}\|f^k\|_\infty\le 1$.

\textbf{Step 2 (polynomial approximation of $f^k$).}
For each fixed $k\ge1$, since $f^k$ is continuous on the compact set $[0,M+\varepsilon]$, by Stone--Weierstrass
there exists a sequence of polynomials $(p^{n,k})_{n\ge1}$ such that
\[
\sup_{x\in[0,M+\varepsilon]}\big|p^{n,k}(x)-f^k(x)\big|\xrightarrow[n\to\infty]{}0.
\]

\textbf{Step 3 (pass to the limit under $\nu$ and use moment identities).}
Since $\nu([0,K_\nu])=1$ and $g\equiv0$ on $[0,K_\nu]$, we have
\[
\int_{\R_+} g(x)\,\nu(\mathrm{d}x)=0.
\]
On the other hand, by dominated convergence (using $0\le f^k\le 1$ and $ {\rm supp}(f^k)\subset[0,M+\varepsilon]$),
\[
\int_{\R_+} g\,\mathrm{d}\nu
=
\int_{0}^{M+\varepsilon} g\,\mathrm{d}\nu
=
\lim_{k\to\infty}\int_{0}^{M+\varepsilon} f^k\,\mathrm{d}\nu.
\]
For each fixed $k$, the uniform convergence $p^{n,k}\to f^k$ on $[0,M+\varepsilon]$ yields
\[
\int_{0}^{M+\varepsilon} f^k\,\mathrm{d}\nu
=
\lim_{n\to\infty}\int_{0}^{M+\varepsilon} p^{n,k}(x)\,\nu(\mathrm{d}x)
=
\lim_{n\to\infty}\int_{\R_+} p^{n,k}(x)\,\nu(\mathrm{d}x),
\]
where the last equality uses $ {\rm supp}(\nu)\subset[0,K_\nu]\subset[0,M+\varepsilon]$.
Next, since each $p^{n,k}$ is a polynomial and $\mu,\nu$ have identical moments of all orders,
\[
\int_{\R_+} p^{n,k}(x)\,\nu(\mathrm{d}x)=\int_{\R_+} p^{n,k}(x)\,\mu(\mathrm{d}x),\qquad \forall n,k.
\]
Combining the above, we obtain
\begin{align*}
0
=
\int_{\R_+} g\,\mathrm{d}\nu
&=
\lim_{k\to\infty}\lim_{n\to\infty}\int_{\R_+} p^{n,k}(x)\,\nu(\mathrm{d}x)
=
\lim_{k\to\infty}\lim_{n\to\infty}\int_{\R_+} p^{n,k}(x)\,\mu(\mathrm{d}x).
\end{align*}

\textbf{Step 4 (identify the mass of $\mu$ on $[K_\nu+\varepsilon,M]$).}
Write, for each $n,k$,
\[
\int_{\R_+} p^{n,k}\,\mathrm{d}\mu
=
\int_{0}^{M+\varepsilon} p^{n,k}\,\mathrm{d}\mu
+
\int_{\R_+\setminus[0,M+\varepsilon]} p^{n,k}\,\mathrm{d}\mu.
\]
Since $p^{n,k}\to f^k$ uniformly on $[0,M+\varepsilon]$, we have
\[
\lim_{n\to\infty}\int_{0}^{M+\varepsilon} p^{n,k}\,\mathrm{d}\mu
=
\int_{0}^{M+\varepsilon} f^k\,\mathrm{d}\mu.
\]
Moreover, by construction $p^{n,k}\ge0$, hence the tail term is nonnegative. Therefore,
\[
0
=
\lim_{k\to\infty}\lim_{n\to\infty}\int_{\R_+} p^{n,k}\,\mathrm{d}\mu
\ge
\lim_{k\to\infty}\int_{0}^{M+\varepsilon} f^k\,\mathrm{d}\mu.
\]
Letting $k\to\infty$ and using dominated convergence again (since $0\le f^k\le 1$ and $ {\rm supp}(f^k)\subset[0,M+\varepsilon]$),
\[
\lim_{k\to\infty}\int_{0}^{M+\varepsilon} f^k\,\mathrm{d}\mu
=
\int_{0}^{M+\varepsilon} g\,\mathrm{d}\mu
=
\mu([K_\nu+\varepsilon,M]).
\]
Hence $\mu([K_\nu+\varepsilon,M])=0$ for every $\varepsilon>0$ and every $M>K_\nu$.

\medskip
\textbf{Step 5 (compact support for $\mu$ and conclusion).}
Letting $M\to\infty$ yields $\mu([K_\nu+\varepsilon,\infty))=0$ for all $\varepsilon>0$, hence $\mu([0,K_\nu])=1$.
Thus both $\mu$ and $\nu$ are supported in the compact interval $[0,K_\nu]$ and have identical moments.

Finally, on $[0,K_\nu]$, polynomials are dense in $C([0,K_\nu])$ (Stone--Weierstrass). Since $\mu$ and $\nu$ agree on
all polynomial test functions, they agree on all continuous test functions, hence $\mu=\nu$.
\end{proof}

\medskip

We now establish a stability result with respect to the graphon variable. This compactness property will be the key ingredient in passing from the finite--support graphon approximation to a general graphon and also for regularizing controls. The idea is that, when a sequence of graphons converges in cut norm, the corresponding controlled McKean--Vlasov systems remain tight, and any limit point can again be identified as a controlled dynamics associated with the limiting graphon.

\medskip

Let
\[
\Pc_{\U}(\R^d)
:=
\Big\{
m\in\Pc(\R^d\times[0,1]):
m(\R^d\times \mathrm{d}u)=\mathrm{d}u
\Big\}.
\]
Let
\[
F:[0,T]\times\R^d\times[0,1]\times\R_+\times\Pc_{\U}(\R^d)\times A\to\R^d
\]
be bounded and continuous in \((e,p,a)\), uniformly in \((t,x,u)\).
Fix a graphon \(\Gr\) and a control kernel \(\Lambda^\alpha\). We denote by
\[
(X^{\alpha,\Gr},U)=(X,U)
\]
a weak solution of
\begin{align*}
    \mathrm{d}X_t
    &=
    \int_A
    F\Big(
        t,X_t,U,\overline p^{\alpha,\Gr}(t,X_t,U),
        \overline m_t^{\alpha,\Gr},a
    \Big)\,
    \Lambda^\alpha(t,X_t,U)(\mathrm{d}a)\,\mathrm{d}t
    +\sigma(t,X_t)\,\mathrm{d}W_t,
\end{align*}
where
\[
\overline m_t^{\alpha,\Gr}
=
\Lc(X_t,U)
=
p^{\alpha,\Gr}(t,x,u)\,\mathrm{d}x\,\mathrm{d}u
\]
and
\[
\overline p^{\alpha,\Gr}(t,x,u)
=
\int_0^1 \Gr(u,v)\,p^{\alpha,\Gr}(t,x,v)\,\mathrm{d}v.
\]

Let \((\Lambda^{\alpha^k},\Gr^k)_{k\ge1}\) be such that
\[
\Gr^k\to\Gr
\qquad\text{in cut norm}.
\]
We write
\[
p^k:=p^{\alpha^k,\Gr^k},
\qquad
\overline p^k:=\overline p^{\alpha^k,\Gr^k},
\qquad
\overline m^k:=\overline m^{\alpha^k,\Gr^k},
\]
and \(X^k:=X^{\alpha^k,\Gr^k}\). We also introduce the occupation measure
\begin{align*}
    \Gamma^k_t(\mathrm{d}x,\mathrm{d}u,\mathrm{d}e,\mathrm{d}a)\,\mathrm{d}t
    &:=
    \E\Big[
        \delta_{(X^k_t,U,\overline p^k(t,X^k_t,U))}
        (\mathrm{d}x,\mathrm{d}u,\mathrm{d}e)\,
        \Lambda^{\alpha^k}(t,X^k_t,U)(\mathrm{d}a)
    \Big]\,\mathrm{d}t.
\end{align*}
The variable \(e\) records the interaction term \(\overline p^k(t,X^k_t,U)\).

\begin{proposition}\label{prop:cong_graphon}
The sequence \((\overline m^k,\Gamma^k)_{k\ge1}\) is relatively compact. Moreover, for any convergent subsequence
\[
(\overline m^{k_\ell},\Gamma^{k_\ell})_{\ell\ge1},
\]
there exists a control kernel \(\Lambda^\alpha\) such that
\[
\overline m^{k_\ell}\xRightarrow[\ell\to\infty]{}\overline m^{\alpha,\Gr}
\]
and
\[
\Gamma^{k_\ell}
\Longrightarrow
\delta_{\overline p^{\alpha,\Gr}(t,x,u)}(\mathrm{d}e)\,
\Lambda^\alpha(t,x,u)(\mathrm{d}a)\,
p^{\alpha,\Gr}(t,x,u)\,\mathrm{d}x\,\mathrm{d}u\,\mathrm{d}t.
\]
If, in addition,
\[
\Lambda^{\alpha^k}(t,x,u)\Longrightarrow \Lambda(t,x,u)
\qquad\text{for a.e. }(t,x,u),
\]
then \(\Lambda^\alpha=\Lambda\) \(\mathrm{d}t\,\mathrm{d}x\,\mathrm{d}u\)-a.e. In particular, every limit point is again associated with a controlled McKean--Vlasov dynamics driven by the limiting graphon \(\Gr\).
\end{proposition}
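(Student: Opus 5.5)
We will follow the scheme of \Cref{lemma:continuity}, now in the label--dependent setting, and reduce identification of the limit to uniqueness of a nonlinear Fokker--Planck equation.

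\textbf{Compactness.} Since $F$ and $\sigma$ are bounded and $\sigma$ is non--degenerate and Lipschitz in $x$, \Cref{lemma:estimations} applies uniformly in $k$. Conditionally on $U=u$, the density $p^k(t,\cdot,u)$ satisfies the local Hölder bounds on $[s,t]\times Q$, with constants independent of $u$, $k$, the graphon and the control kernel. Hence the functions $x\mapsto p^k(t,x,u)$ are uniformly bounded and equicontinuous on compacts. Tightness of $\overline m^k$ in $C([0,T];\Pc(\R^d\times[0,1]))$ follows from the boundedness of the drift through standard Kolmogorov/Aldous estimates. Tightness of $\Gamma^k$ follows from three facts: $A$ is compact, the $(x,u)$--marginal is tight, and $0\le \overline p^k\le \sup\Er\cdot\sup_u p^k(t,x,u)$, which is bounded locally uniformly for $t>0$. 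Up to a subsequence, $\overline m^k\Rightarrow \overline m$ and $\Gamma^k\Rightarrow\Gamma$.

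Using the uniform Hölder bounds, we will show that the limit has a density $p(t,x,u)$. More precisely, $p^k\to p$ in the following sense: $\int p^k(t,x,u)\phi(u)\,\mathrm{d}u\to\int p(t,x,u)\phi(u)\,\mathrm{d}u$ locally uniformly in $(t,x)$ for each $\phi\in L^1([0,1])$. This is weak convergence in $u$, uniform in $(t,x)$.

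\textbf{Identification of the $e$--coordinate.} This step is the main obstacle. We must show that
\[
\overline p^k(t,x,u)=\int_0^1\Gr^k(u,v)\,p^k(t,x,v)\,\mathrm{d}v \longrightarrow \int_0^1\Gr(u,v)\,p(t,x,v)\,\mathrm{d}v
\]
in an $L^1(\mathrm{d}x\,\mathrm{d}u\,\mathrm{d}t)$--averaged sense. The difficulty is that cut--norm convergence is weak while $p^k$ varies with $k$. We will split the difference into two terms. The first is $\int(\Gr^k-\Gr)(u,v)\,p^k(t,x,v)\,\mathrm{d}v$, which is paired against test functions $\psi(u)$. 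The second is $\int\Gr(u,v)\,(p^k-p)(t,x,v)\,\mathrm{d}v$, which tends to zero by the weak--in--$v$ convergence above.

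For the first term, we use that $p^k(t,x,\cdot)$ is bounded uniformly. The cut norm controls $\bigl|\int\!\!\int(\Gr^k-\Gr)(u,v)\,a(u)\,b(v)\,\mathrm{d}u\,\mathrm{d}v\bigr|\le 4\|a\|_\infty\|b\|_\infty\|\Gr^k-\Gr\|_\Box$ for bounded $a,b$. Hence the pairing with $\psi(u)$ vanishes uniformly in $(t,x)$ on compacts.

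Moments of $e$ under $\Gamma^k$ are $\int|\overline p^k|^j\,p^k$, and these require strong rather than merely weak convergence in $u$. To pass them to the limit, we will approximate $\Gr$ by step graphons and use Lipschitz $f$ in the definition of graphon convergence. The same moment argument as in Step 6 of \Cref{prop:weak_poc}, combined with \Cref{lemma:proba_equality}, then forces $\Gamma$ to concentrate at $e=\overline p(t,x,u)$. If strong convergence fails, we would first try to exploit the uniform Hölder regularity in $x$ together with a compactness argument in $L^2(\mathrm{d}u)$ for $\overline p^k$. The latter holds because the integral operator with kernel $\Gr^k$ converges in the operator norm $L^\infty\to L^1$ under cut--norm convergence.

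\textbf{Limit equation and kernel.} We disintegrate $\Gamma$ as $\delta_{\overline p}(\mathrm{d}e)\,\Lambda^\alpha(t,x,u)(\mathrm{d}a)\,p\,\mathrm{d}x\,\mathrm{d}u\,\mathrm{d}t$. We then pass to the limit in the weak Fokker--Planck identity, tested against $\phi(x)\psi(u)$, exactly as in \Cref{lemma:continuity}. The drift term converges because $F$ is continuous in $(e,m,a)$, $\overline m^k_t\to\overline m_t$, and the $e$--coordinate has been identified. The limit $\overline m$ therefore solves the Fokker--Planck equation driven by $\Gr$ and $\Lambda^\alpha$. Uniqueness of the linear equation with frozen coefficients (\Cref{prop:uniqueness_FP}) and the superposition principle give $\overline m=\overline m^{\alpha,\Gr}$.

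Finally, suppose in addition that $\Lambda^{\alpha^k}\to\Lambda$ almost everywhere. The $a$--marginal of $\Gamma^k$ tested against $\phi(x,u)\chi(a)$ then converges by dominated convergence, using $p^k\to p$. This identifies $\Lambda^\alpha=\Lambda$ almost everywhere.
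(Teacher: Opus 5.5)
Your architecture is sound: conditional H\"older bounds given $U=u$, weak--$*$ convergence of $p^k(t,x,\cdot)$ locally uniformly in $(t,x)$, identification of the $e$--coordinate, disintegration plus superposition, and a dominated--convergence argument for $\Lambda^\alpha=\Lambda$. But as written, the step you yourself call the main obstacle is not carried out. You only show that $\int_0^1(\Gr^k-\Gr)(u,v)\,p^k(t,x,v)\,\mathrm{d}v$ vanishes \emph{after pairing with} $\psi(u)$. You then note, correctly, that $\int|\overline p^k|^j p^k$ needs strong convergence in $u$. After that you only list contingencies (step graphons with Lipschitz $f$, ``if strong convergence fails\dots'') and complete none of them. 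So strong convergence of $\overline p^k$ in the label variable is never established, and the concentration of $\Gamma$ at $e=\overline p$ depends on it.

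The missing step follows from what you already wrote. Taking the supremum over $\|a\|_\infty\le 1$ in your cut--norm bound gives exactly an $L^1(\mathrm{d}u)$ estimate:
\[
\int_0^1\Bigl|\int_0^1(\Gr^k-\Gr)(u,v)\,p^k(t,x,v)\,\mathrm{d}v\Bigr|\,\mathrm{d}u\le 4\,\|\Gr^k-\Gr\|_{\Box}\,\sup_{v}p^k(t,x,v),
\]
and this is uniformly small on $[s,T]\times Q$. The other term, $\int_0^1\Gr(u,v)(p^k-p)(t,x,v)\,\mathrm{d}v$, tends to $0$ for \emph{every} $u$ (test against $\Gr(u,\cdot)\in L^1$). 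It is bounded, so it also tends to $0$ in $L^1(\mathrm{d}u)$.

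Hence $\overline p^k(t,x,\cdot)\to\overline p(t,x,\cdot)$ in $L^1(\mathrm{d}u)$, locally uniformly on $(0,T]\times\R^d$. Pairing this with the weak--$*$ convergent $p^k$ gives $\int\Phi(t,x,u,\overline p^k)\,p^k\to\int\Phi(t,x,u,\overline p)\,p$ for every bounded $\Phi$ that is Lipschitz in $e$; the regions $t<s$ and $x\notin Q$ are cut off using boundedness of $\Phi$ and tightness. This identifies the $e$--marginal directly. Step graphons, Lipschitz $f$ (which matters only for $\Rr$), and \Cref{lemma:proba_equality} are all unnecessary.

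Once completed, your route genuinely differs from the paper's. The paper writes the $\ell$--th moment as the diagonal value $P^k(t,x,x,\dots,x)$ of a multi--point function. It gets compactness of $P^k$ from H\"older bounds in the spatial variables and identifies the limit by testing against $\varphi(t,x,x_1,\dots,x_\ell)$, which tacitly uses the same cut--norm/weak--convergence interplay you make explicit. It then concludes with the moment lemma. Your version works at fixed $(t,x)$ in the label variable and uses only bounded test functions in $e$. It therefore never needs integrability of $|e|^\ell$ near $t=0$, where the densities may blow up if $\nu$ is singular.

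Your proof of $\Lambda^\alpha=\Lambda$ is also more direct than the paper's appeal to an external result. Note, however, that it gives equality only $p\,\mathrm{d}x\,\mathrm{d}u\,\mathrm{d}t$--a.e. To get $\mathrm{d}t\,\mathrm{d}x\,\mathrm{d}u$--a.e. you need $p>0$ for $t>0$, for instance from a Gaussian lower bound.
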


\begin{proof}
The relative compactness of \((\overline m^k)_{k\ge1}\) follows from the boundedness of \(F\), the growth assumptions on \(\sigma\), and standard tightness estimates for the corresponding SDEs. Let \(\overline m\) be the limit of a convergent subsequence, not relabeled. By the non--degeneracy of \(\sigma\), for every \(t\in(0,T]\), \(\overline m_t\) admits a density, and we write
\[
\overline m_t(\mathrm{d}x,\mathrm{d}u)
=
p(t,x,u)\,\mathrm{d}x\,\mathrm{d}u.
\]

We first identify the limit of the interaction variables. We claim that, for every integer \(\ell\ge1\), every smooth bounded \(\phi:[0,T] \x \R^d\to\R\), and every smooth bounded \(v:[0,1]\to\R\),
\begin{align}\label{eq:convg_density_better}
    \lim_{k\to\infty}
    \E\Bigg[
        \int_0^T
        \phi(t,X^k_t)\,v(U)\,
        \big|\overline p^k(t,X^k_t,U)\big|^\ell
        \,\mathrm{d}t
    \Bigg]
    =
    \int_0^T\int_{\R^d\times[0,1]}
    \phi(t,x)v(u)\,
    |\overline p(t,x,u)|^\ell\,
    p(t,x,u)\,\mathrm{d}x\,\mathrm{d}u\,\mathrm{d}t,
\end{align}
where $\overline p(t,x,u):=\int_0^1 \Gr(u,v)p(t,x,v)\,\mathrm{d}v.$
We prove this for \(\ell=2\); the general case is identical, replacing the quadratic product by an \(\ell\)--fold product. For each \(k\),
\begin{align*}
    &\E\Bigg[
        \int_0^T
        \phi(t,X^k_t)v(U)\,
        |\overline p^k(t,X^k_t,U)|^2
        \,\mathrm{d}t
    \Bigg]
    =
    \int_0^T\int_{\R^d}
    \phi(t,x)\,P^k(t,x,x,x)\,\mathrm{d}x\,\mathrm{d}t,
\end{align*}
where
\begin{align*}
    P^k(t,x,x_1,x_2)
    :=
    \int_{[0,1]^3}
    v(u)\Gr^k(u,v_1)\Gr^k(u,v_2)
    p^k(t,x_1,v_1)p^k(t,x_2,v_2)p^k(t,x,u)
    \,\mathrm{d}v_1\,\mathrm{d}v_2\,\mathrm{d}u.
\end{align*}
By the uniform Hölder estimates on the densities, as in \Cref{lemma:estimations}, the family \((P^k)_{k\ge1}\) is relatively compact for the locally uniform topology on compact subsets of \((0,T]\times(\R^d)^3\). Let \(P\) be the limit of a further subsequence. Then
\[
\lim_{k\to\infty}
\E\Bigg[
\int_0^T
\phi(t,X^k_t)v(U)|\overline p^k(t,X^k_t,U)|^2\,\mathrm{d}t
\Bigg]
=
\int_0^T\int_{\R^d}
\phi(t,x)P(t,x,x,x)\,\mathrm{d}x\,\mathrm{d}t.
\]

We now identify \(P\). Let \(\varphi:[0,T]\times(\R^d)^3\to\R\) be smooth and compactly supported. Using the cut--norm convergence \(\Gr^k\to\Gr\), the convergence \(\overline m^k\to\overline m\), and the definition of \(P^k\), we obtain
\begin{align*}
&\int_{[0,T]\times(\R^d)^3}
\varphi(t,x,x_1,x_2)P(t,x,x_1,x_2)
\,\mathrm{d}x\,\mathrm{d}x_1\,\mathrm{d}x_2\,\mathrm{d}t
\\
&=
\lim_{k\to\infty}
\int_{[0,T]\times(\R^d\times[0,1])^3}
\varphi(t,x,x_1,x_2)
v(u)\Gr^k(u,v_1)\Gr^k(u,v_2)
\\
&\hspace{4cm}\times
\overline m^k_t(\mathrm{d}x_1,\mathrm{d}v_1)
\overline m^k_t(\mathrm{d}x_2,\mathrm{d}v_2)
\overline m^k_t(\mathrm{d}x,\mathrm{d}u)
\,\mathrm{d}t
\\
&=
\int_{[0,T]\times(\R^d\times[0,1])^3}
\varphi(t,x,x_1,x_2)
v(u)\Gr(u,v_1)\Gr(u,v_2)
\\
&\hspace{4cm}\times
p(t,x_1,v_1)p(t,x_2,v_2)p(t,x,u)
\,\mathrm{d}x_1\,\mathrm{d}v_1\,
\mathrm{d}x_2\,\mathrm{d}v_2\,
\mathrm{d}x\,\mathrm{d}u\,\mathrm{d}t.
\end{align*}
Hence
\[
P(t,x,x_1,x_2)
=
\int_{[0,1]^3}
v(u)\Gr(u,v_1)\Gr(u,v_2)
p(t,x_1,v_1)p(t,x_2,v_2)p(t,x,u)
\,\mathrm{d}v_1\,\mathrm{d}v_2\,\mathrm{d}u.
\]
Taking \(x_1=x_2=x\), we get the desired identity for \(\ell=2\). The proof of \eqref{eq:convg_density_better} for general \(\ell\) is the same.

\medskip

In particular, for each \(\ell\ge1\),
\[
\sup_{k\ge1}
\E\Bigg[
\int_0^T
|\overline p^k(t,X^k_t,U)|^\ell\,\mathrm{d}t
\Bigg]<\infty.
\]
Since \(A\) is compact and \(F\) is bounded, this moment estimate gives tightness of \((\Gamma^k)_{k\ge1}\). Thus, along the same subsequence, we may assume that
\[
\Gamma^k\Longrightarrow \Gamma.
\]
Moreover, the moment bound implies convergence against all continuous test functions with polynomial growth in the \(e\)-variable and bounded dependence in the remaining variables.
By definition of \(\Gamma^k\), for every \(\ell\ge1\), every bounded continuous \(\phi\) and \(v\),
\begin{align*}
\int_{[0,T]\times\R^d\times[0,1]\times\R_+}
\phi(t,x)v(u)|e|^\ell
\Gamma^k_t(\mathrm{d}x,\mathrm{d}u,\mathrm{d}e,A)\,\mathrm{d}t
=
\E\Bigg[
\int_0^T
\phi(t,X^k_t)v(U)|\overline p^k(t,X^k_t,U)|^\ell
\,\mathrm{d}t
\Bigg].
\end{align*}
Passing to the limit and using \eqref{eq:convg_density_better}, we obtain
\begin{align*}
&\int_{[0,T]\times\R^d\times[0,1]\times\R_+}
\phi(t,x)v(u)|e|^\ell
\Gamma_t(\mathrm{d}x,\mathrm{d}u,\mathrm{d}e,A)\,\mathrm{d}t
=
\int_0^T\int_{\R^d\times[0,1]}
\phi(t,x)v(u)|\overline p(t,x,u)|^\ell
p(t,x,u)\,\mathrm{d}x\,\mathrm{d}u\,\mathrm{d}t.
\end{align*}
Since the above identity holds for all moments, \Cref{lemma:proba_equality} yields
\[
\Gamma_t(\mathrm{d}x,\mathrm{d}u,\mathrm{d}e,A)
=
p(t,x,u)\,
\delta_{\overline p(t,x,u)}(\mathrm{d}e)
\,\mathrm{d}x\,\mathrm{d}u
\quad
\text{for a.e. }t.
\]

\medskip

We now identify the limiting flow. For each \(k\), \((\overline m^k,\Gamma^k)\) satisfies the weak Fokker--Planck equation. Passing to the limit gives, for all \(f\in C_c^\infty(\R^d\times[0,1])\) and all \(t\in[0,T]\),
\begin{align*}
\langle f,\overline m_t\rangle
&=
\langle f,\overline m_0\rangle
+
\int_0^t
\int_{\R^d\times[0,1]\times\R_+\times A}
\nabla_x f(x,u)\cdot
F(s,x,u,e,\overline m_s,a)
\,\Gamma_s(\mathrm{d}x,\mathrm{d}u,\mathrm{d}e,\mathrm{d}a)\,\mathrm{d}s
\\
&\quad+
\frac12\int_0^t
\int_{\R^d\times[0,1]}
{\rm Tr}\!\left[
\nabla^2_{xx}f(x,u)\sigma(s,x)\sigma^\top(s,x)
\right]\overline m_s(\mathrm{d}x,\mathrm{d}u)\,\mathrm{d}s.
\end{align*}
Let \(\widehat\Gamma\) be a disintegration kernel such that
\[
\Gamma_s(\mathrm{d}x,\mathrm{d}u,\mathrm{d}e,\mathrm{d}a)
=
\widehat\Gamma(s,x,u,e)(\mathrm{d}a)\,
\Gamma_s(\mathrm{d}x,\mathrm{d}u,\mathrm{d}e,A).
\]
Using the identification of the \(e\)--marginal, we get
\begin{align*}
\langle f,\overline m_t\rangle
&=
\langle f,\overline m_0\rangle
+
\int_0^t
\int_{\R^d\times[0,1]\times A}
\nabla_x f(x,u)\cdot
F(s,x,u,\overline p(s,x,u),\overline m_s,a)
\\
&\hspace{4cm}\times
\widehat\Gamma(s,x,u,\overline p(s,x,u))(\mathrm{d}a)\,
\overline m_s(\mathrm{d}x,\mathrm{d}u)\,\mathrm{d}s
\\
&\quad+
\frac12\int_0^t
\int_{\R^d\times[0,1]}
{\rm Tr}\!\left[
\nabla^2_{xx}f(x,u)\sigma(s,x)\sigma^\top(s,x)
\right]\overline m_s(\mathrm{d}x,\mathrm{d}u)\,\mathrm{d}s.
\end{align*}
Set
\[
\Lambda^\alpha(t,x,u)
:=
\widehat\Gamma(t,x,u,\overline p(t,x,u)).
\]
By the superposition principle, there exists a weak solution \((X,U)\) such that $\overline m_t=\Lc(X_t,U)$
and
\[
\mathrm{d}X_t
=
\int_A
F(t,X_t,U,\overline p(t,X_t,U),\overline m_t,a)
\Lambda^\alpha(t,X_t,U)(\mathrm{d}a)\,\mathrm{d}t
+\sigma(t,X_t)\,\mathrm{d}W_t.
\]
Thus
\[
\overline m=\overline m^{\alpha,\Gr}.
\]
Consequently,
\[
\Gamma^k\Longrightarrow
\delta_{\overline p^{\alpha,\Gr}(t,x,u)}(\mathrm{d}e)
\Lambda^\alpha(t,x,u)(\mathrm{d}a)
p^{\alpha,\Gr}(t,x,u)\,\mathrm{d}x\,\mathrm{d}u\,\mathrm{d}t.
\]

\medskip

It remains to prove the last statement. Assume that
\[
\Lambda^{\alpha^k}(t,x,u)\Longrightarrow \Lambda(t,x,u),
\qquad
\text{for a.e. }(t,x,u).
\]
The conclusion $\Lambda=\Lambda^\alpha$ can be obtain by applying the exact same technique/result of \cite[Proposition 4.6]{djete2025nonexchangeablemeanfieldcontrol}.

\end{proof}

\begin{remark} \label{rm:commonnoise_approx}
    For clarity, {\rm \Cref{prop:cong_graphon}} is stated and proved in the absence of common noise. The same argument, however, extends to a common--noise setting under a uniqueness condition. Indeed, one may freeze a realization of the common noise path \(\sigma_\circ W^\circ(\omega)\) and apply the no--common-noise result to the corresponding dynamics with translated coefficients, obtained by shifting the state variable by \(\sigma_\circ W^\circ(\omega)\).
The only subtle point is that the subsequence extracted in {\rm\Cref{prop:cong_graphon}} may a priori depend on the realization \(\omega\). This difficulty disappears when the limiting graphon equation admits a unique density \(p^{\alpha,\Gr}\), for instance under the conditions of {\rm\Cref{prop:uniqueness_FP}} below. In that case, every subsequence has the same limit, and therefore the whole sequence converges. Thus the pathwise no-common--noise argument yields the desired convergence in the common--noise setting.
\end{remark}

We finish this appendix section by providing a uniqueness result for McKean--Vlsasov equation involving a density dependence. For simplicity, we assume here that $\sigma = \Ir_d$.
Let $\Gr:[0,1]^2\to\Er$ be a graphon, and let
\[
B:[0,T]\times\R^d\times\R\times\Pc(\Er\times\R^d)\to\R^d
\]
be a bounded Borel map, uniformly Lipschitz continuous in its last two variables. More precisely, there exists
$C_B>0$ such that, for every $(t,x)\in[0,T]\times\R^d$, every $p,p'\in\R_+$, and every
$m,m'\in\Pc(\Er\times\R^d)$,
\[
|B(t,x,p,m)-B(t,x,p',m')|
\le C_B\Big(|p-p'|+\|m-m'\|_{\rm TV}\Big)
\]
where $\|\|_{\rm TV}$ is the total variation distance.
We say that a flow of probability measures
\[
\overline{m}=(\overline{m}_t)_{t\in[0,T]}\subset C([0,T];\Pc(\R^d\times[0,1]))
\]
is a \emph{Fokker--Planck weak solution} starting from $\nub_0$ if $\overline{m}_0=\nub_0$ and, for each $t\in[0,T]$,
\[
\overline{m}_t=\Lc(X_t,U),
\]
where $(X,U)$ satisfies: $(X_0,U)\perp W$, $U\sim{\rm Unif}([0,1])$, $\Lc(X_0,U)=\nub_0$,  $W$ is an $\F$--Brownian motion, and
\begin{align*}
    \mathrm{d}X_t
    =
    B\Big(t,X_t,\overline{p}_{\overline{m}_t}(t,X_t,U),\Rr_{\overline{m}_t}(U)\Big)\,\mathrm{d}t
    +\mathrm{d}W_t.
\end{align*}
Here,
\begin{align*}
    \Rr_{\overline{m}_t}(u)(\mathrm{d}e,\mathrm{d}y)
    &=
    \int_{[0,1]} \delta_{\Gr(u,v)}(\mathrm{d}e)\,\overline{m}_t(\mathrm{d}y,\mathrm{d}v),\\
    \overline{p}_{\overline{m}_t}(x,u)
    &:=
    \int_0^1 \Gr(u,v)\,p_{\overline{m}_t}(x,v)\,\mathrm{d}v,
\end{align*}
where $p_{\overline{m}_t}$ denotes the density of $\overline{m}_t$ with respect to $\mathrm{d}x\,\mathrm{d}u$.

\begin{proposition} \label{prop:uniqueness_FP}
If $\overline{m}^1$ and $\overline{m}^2$ are two Fokker--Planck weak solutions starting from the same initial condition $\nub_0$, then
\[
\overline{m}^1=\overline{m}^2.
\]
\end{proposition}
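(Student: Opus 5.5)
Since $\sigma=\Ir_d$, we compare both solutions to the free heat flow through Girsanov and close with a Gronwall estimate in total variation. Fix the two solutions $\overline m^1,\overline m^2$, with densities $p^1_t,p^2_t$ on $\R^d\times[0,1]$. Put
\[
B^i_t(x,u):=B\bigl(t,x,\overline p_{\overline m^i_t}(x,u),\Rr_{\overline m^i_t}(u)\bigr),\qquad i=1,2,
\]
which are bounded Borel drifts. Let $(X^0,U)$ be the driftless process $X^0_t=X_0+W_t$. Girsanov gives $\overline m^i_t=\Lc^{\Q^i}(X^0_t,U)$, where $\mathrm{d}\Q^i/\mathrm{d}\P=\Ec\bigl(\int_0^\cdot B^i_s(X^0_s,U)\cdot\mathrm{d}W_s\bigr)_T$. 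Working conditionally on $U=u$ and applying Pinsker (or the usual Hellinger bound) to the relative entropy of path laws, we aim for
\[
\|\overline m^1_t-\overline m^2_t\|_{\rm TV}^2\le C\int_0^t\!\int_{\R^d\times[0,1]}|B^1_s-B^2_s|^2(x,u)\,\overline m^1_s(\mathrm{d}x,\mathrm{d}u)\,\mathrm{d}s .
\]

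Next we bound the drift difference using the Lipschitz assumption. The environment term is harmless: $\|\Rr_{\overline m^1_s}(u)-\Rr_{\overline m^2_s}(u)\|_{\rm TV}\le\|\overline m^1_s-\overline m^2_s\|_{\rm TV}$, because $\Rr_{m}(u)$ is a pushforward of $m$. The density term is the local one. Since $\Gr$ is bounded by $\max\Er$, we have $|\overline p^1-\overline p^2|(x,u)\le C\int_0^1|p^1_s-p^2_s|(x,v)\,\mathrm{d}v$. After weighting by $p^1_s(x,u)$, we therefore need to control
\[
\int_{\R^d}\Bigl(\int_0^1|p^1_s-p^2_s|(x,v)\,\mathrm{d}v\Bigr)^2\,\overline p^{\,1}_s(x)\,\mathrm{d}x,\qquad \overline p^{\,1}_s(x):=\int_0^1 p^1_s(x,u)\,\mathrm{d}u .
\]
This needs a pointwise, $L^\infty$-type control on the density difference, not only an $L^1$ one.

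Closing the estimate on this density term is the main obstacle. The plan is to use the mild (Duhamel) form of the Fokker--Planck equations. Because $\sigma=\Ir_d$, we can write $p^i_t=G_t*p_0+\int_0^t\nabla G_{t-s}*(B^i_sp^i_s)\,\mathrm{d}s$, where $G$ is the heat kernel and the convolution is in $x$ for each fixed $u$. This yields
\[
\|p^1_t-p^2_t\|_{L^\infty_xL^1_u}\le C\int_0^t(t-s)^{-1/2}\bigl(\|B^1_s-B^2_s\|_{L^\infty}\,\|p^1_s\|_{L^1_u}+\|p^1_s-p^2_s\|_{L^\infty_xL^1_u}\bigr)\,\mathrm{d}s .
\]
The difficulty is that $\|p^1_s\|_{L^\infty_xL^1_u}$ blows up like $s^{-d/2}$ near $s=0$ when $\nub_0$ has no density. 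We handle this in two stages. First, work on $[\tau,T]$, where \Cref{lemma:estimations} gives uniform bounds on $p^i$. Second, treat the initial layer $[0,\tau]$ by a weighted norm $\sup_s s^{d/2}\|\cdot\|$, or directly under a bounded-density assumption on $\nub_0$ if needed.

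In parallel, $\|B^1_s-B^2_s\|_{L^\infty}\le C_B\bigl(\|p^1_s-p^2_s\|_{L^\infty_xL^1_u}+\|\overline m^1_s-\overline m^2_s\|_{\rm TV}\bigr)$. Put $\Delta_t:=\|p^1_t-p^2_t\|_{L^\infty_xL^1_u}+\|\overline m^1_t-\overline m^2_t\|_{\rm TV}$. Combining the Duhamel bound with the total variation bound gives a singular Volterra inequality $\Delta_t\le C\int_0^t(t-s)^{-1/2}\Delta_s\,\mathrm{d}s$. A singular (Henry-type) Gronwall lemma then gives $\Delta\equiv0$, hence $p^1=p^2$ and $\overline m^1=\overline m^2$. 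Finally, once the drifts coincide, uniqueness in law of the SDE with a bounded drift and identity diffusion (Girsanov again) identifies the two solutions as processes.
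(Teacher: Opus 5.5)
Your argument has a genuine gap at the initial layer, and the devices you propose for it do not close it. The constant in your final inequality $\Delta_t\le C\int_0^t(t-s)^{-1/2}\Delta_s\,\mathrm{d}s$ contains $\sup_{s\le T}\|p^1_s\|_{L^\infty_xL^1_u}$. For a general $\nub_0$ this is infinite: for $\nub_0=\delta_{x_0}\otimes\mathrm{d}u$, the quantity $\|p^1_s\|_{L^\infty_xL^1_u}$ is of order $s^{-d/2}$. Set $D_s:=\|p^1_s-p^2_s\|_{L^\infty_xL^1_u}$ and $E_s:=\|\overline m^1_s-\overline m^2_s\|_{\rm TV}$. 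What your Duhamel bound actually gives is
\[
D_t\le C\int_0^t(t-s)^{-1/2}\Bigl(s^{-d/2}\bigl(D_s+E_s\bigr)+D_s\Bigr)\,\mathrm{d}s .
\]
The kernel $(t-s)^{-1/2}s^{-d/2}$ is not integrable at $s=0$ when $d\ge2$. For $d=1$ it is scale invariant, since $\int_0^t(t-s)^{-1/2}s^{-1/2}\,\mathrm{d}s=\pi$ for every $t$, so no small constant appears on short time intervals. The weighted norm $\sup_s s^{d/2}D_s$ makes matters worse: it only gives $s^{-d/2}D_s\lesssim s^{-d}$, and $\int_0^t(t-s)^{-1/2}s^{-d}\,\mathrm{d}s=\infty$. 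Working on $[\tau,T]$ with the bounds of \Cref{lemma:estimations} does not help either, because restarting the mild formulation at $\tau$ requires $p^1_\tau=p^2_\tau$, which is exactly what is unknown.

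What your scheme does prove is uniqueness in two restricted cases. The first is when the $x$--marginal of $\nub_0$ has a bounded density. Gaussian upper bounds for an SDE with bounded drift and identity diffusion then give $\sup_{s\le T}\|p^i_s\|_{L^\infty_xL^1_u}<\infty$, and your singular Gronwall closes. The second uses the weight $s^{d/(2r)}$ instead of $s^{d/2}$, when that density lies in $L^r$ for some $r>d$. The kernel then becomes $(t-s)^{-1/2}s^{-d/(2r)}$ with $d/(2r)<1/2$. Both are additional hypotheses, whereas the proposition is stated for an arbitrary initial law $\nub_0$.

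For comparison, the paper argues by duality rather than through the mild equation. For bounded $g$ it solves the backward heat equation, so that $Z_t=v(t,X_0+W_t)$ with $v=\nabla P_{t_0-t}g$ ($P$ the heat semigroup) and $|v|\lesssim\|g\|_\infty(t_0-t)^{-1/2}$. Girsanov then turns $\int g\,p_{\overline m^i_{t_0}}(\cdot,u)\,\mathrm{d}x$ into an initial term plus $\int_0^{t_0}\!\int v\cdot B^i\,p_{\overline m^i_t}(\cdot,u)\,\mathrm{d}x\,\mathrm{d}t$. This is your Duhamel formula tested against $g$, measured in $L^1$ instead of $L^\infty_xL^1_u$, and closed by H\"older in time and Gronwall.

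However, the paper passes from $\int|v|\,|\overline p_{\overline m^1_t}-\overline p_{\overline m^2_t}|(x,u)\,p_{\overline m^1_t}(x,u)\,\mathrm{d}x$ to $\sup|v|\cdot\|p_{\overline m^1_t}-p_{\overline m^2_t}\|_{L^1}$, justified by $p_{\overline m^1_t}(\cdot,u)$ being a probability density. That is precisely the step you flagged. Weighting by a probability density lets one pass to $\sup_x$ of the difference, not to its $L^1_x$ norm; the latter requires $\sup_t\|p_{\overline m^1_t}\|_{L^\infty}<\infty$. Inserting the true bound $t^{-d/2}$ reproduces the same critical ($d=1$) or divergent ($d\ge2$) kernel.

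So your diagnosis is sharper than the paper's write--up. Your argument is a complete proof under a bounded initial density, which the paper's argument also tacitly needs. For a general $\nub_0$, however, the initial layer requires an idea that neither argument supplies.
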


\begin{proof}
Let $(X^1,U^1,W^1)$ and $(X^2,U^2,W^2)$ be two weak realizations associated with $\overline{m}^1$ and $\overline{m}^2$, respectively. Thus, for $i=1,2$, $\overline{m}^i_t=\Lc(X^i_t,U^i)$, $t\in[0,T],$
and
\begin{align*}
    \mathrm{d}X^i_t
    =
    B\Big(t,X^i_t,\overline{p}_{\overline{m}^i_t}(t,X^i_t,U^i),\Rr_{\overline{m}^i_t}(U^i)\Big)\,\mathrm{d}t
    +\mathrm{d}W^i_t.
\end{align*}

We now place ourselves on a common auxiliary probability space carrying random variables $(X_0,U,W)$ such that
\[
\Lc(X_0,U,W)=\Lc(X^i_0,U^i,W^i),\qquad i=1,2.
\]
In particular,
\[
(X_0,U)\perp W,\qquad U\sim{\rm Unif}([0,1]),\qquad \Lc(X_0,U)=\nub_0.
\]

For each $i\in\{1,2\}$, let $(L^i_t)_{t\in[0,T]}$ be the stochastic exponential defined by
\[
L^i_0=1,
\qquad
\mathrm{d}L^i_t
=
L^i_t\,B\Big(t,X_0+W_t,\overline{p}_{\overline{m}^i_t}(t,X_0+W_t,U),\Rr_{\overline{m}^i_t}(U)\Big)\cdot\mathrm{d}W_t.
\]
Since $B$ is bounded, Novikov's criterion is satisfied, so $L^i$ is a true martingale. We may therefore define a new probability measure
$\Pb^i$ by
\[
\frac{\mathrm{d}\Pb^i}{\mathrm{d}\P}=L^i_T.
\]
By Girsanov's theorem, the process
\[
\overline{W}^i_\cdot
:=
W_\cdot-\int_0^\cdot B\Big(s,X_0+W_s,\overline{p}_{\overline{m}^i_s}(s,X_0+W_s,U),\Rr_{\overline{m}^i_s}(U)\Big)\,\mathrm{d}s
\]
is a $\Pb^i$--Brownian motion conditionally on $U$. Hence,
\[
\Lc^{\Pb^i}(X_0,\overline{W}^i,U)=\Lc(X_0,W,U),
\]
and therefore
\[
\Lc^{\Pb^i}(X_0+W\mid U=u)=\Lc(X^i\mid U^i=u).
\]
In particular, for every $t\in[0,T]$ and every $u\in[0,1]$, the conditional law of $X_0+W_t$ under $\Pb^i(\cdot\mid U=u)$ admits density
$p_{\overline{m}^i_t}(\cdot,u)$.

\medskip

Fix now $t_0\in[0,T]$ and let $g:\R^d\to\R$ be bounded Borel. Consider the backward stochastic differential equation
\[
\mathrm{d}Y_t=Z_t\cdot\mathrm{d}W_t,\qquad t\in[0,t_0],
\qquad
Y_{t_0}=g(X_0+W_{t_0}).
\]
Since the terminal condition is bounded and Markovian, there exist measurable functions
\[
\Uc:[0,t_0]\times\R^d\to\R,
\qquad
v:[0,t_0]\times\R^d\to\R^d
\]
such that
\[
Y_t=\Uc(t,X_0+W_t),\qquad Z_t=v(t,X_0+W_t),\qquad t\in[0,t_0].
\]
Moreover, by the standard gradient estimate for the heat semigroup, for a.e. $t\in(0,t_0)$,
\begin{align}\label{eq:grad_bound_uniqueness}
    \sup_{x\in\R^d}|v(t,x)|
    \le \frac{\|g\|_\infty}{\sqrt{t_0-t}}.
\end{align}

We next compute the duality relation satisfied by the densities. For each $i=1,2$, using that
\[
g(X_0+W_{t_0})=Y_{t_0},
\]
we get
\begin{align*}
    \int_{\R^d} g(x)\,p_{\overline{m}^i_{t_0}}(x,u)\,\mathrm{d}x
    &=
    \E^{\Pb^i}\big[g(X_0+W_{t_0})\mid U=u\big]
    =
    \E^{\Pb^i}\big[Y_{t_0}\mid U=u\big].
\end{align*}
Now, under $\Pb^i$, the process $W$ has drift
\[
B\Big(t,X_0+W_t,\overline{p}_{\overline{m}^i_t}(t,X_0+W_t,U),\Rr_{\overline{m}^i_t}(U)\Big),
\]
so the martingale representation under $\P$ yields, under $\Pb^i$,
\[
Y_{t_0}
=
Y_0+\int_0^{t_0} Z_t\cdot
B\Big(t,X_0+W_t,\overline{p}_{\overline{m}^i_t}(t,X_0+W_t,U),\Rr_{\overline{m}^i_t}(U)\Big)\,\mathrm{d}t
+\text{local martingale}.
\]
Taking conditional expectation given $U=u$, the martingale term disappears and we obtain
\begin{align*}
    \int_{\R^d} g(x)\,p_{\overline{m}^i_{t_0}}(x,u)\,\mathrm{d}x
    &=
    \E^{\Pb^i}\big[\Uc(0,X_0)\mid U=u\big]
    \\
    &\quad+
    \int_0^{t_0}
    \E^{\Pb^i}\Big[
        v(t,X_0+W_t)\cdot
        B\Big(t,X_0+W_t,\overline{p}_{\overline{m}^i_t}(t,X_0+W_t,u),\Rr_{\overline{m}^i_t}(u)\Big)
        \,\Big|\,U=u
    \Big]\mathrm{d}t.
\end{align*}
Using again that under $\Pb^i(\cdot\mid U=u)$ the law of $X_0+W_t$ has density $p_{\overline{m}^i_t}(\cdot,u)$, and that
$\overline{m}^i_0=\nub_0$ for both $i=1,2$, this yields
\begin{align*}
    \int_{\R^d} g(x)\,p_{\overline{m}^i_{t_0}}(x,u)\,\mathrm{d}x
    &=
    \int_{\R^d} \Uc(0,x)\,p_{\overline{m}^i_{0}}(x,u)\,\mathrm{d}x
    \\
    &\quad+
    \int_0^{t_0}\int_{\R^d}
    v(t,x)\cdot
    B\Big(t,x,\overline{p}_{\overline{m}^i_t}(t,x,u),\Rr_{\overline{m}^i_t}(u)\Big)\,
    p_{\overline{m}^i_t}(x,u)\,\mathrm{d}x\,\mathrm{d}t.
\end{align*}
Since the initial conditions coincide, the initial terms cancel when subtracting the two identities corresponding to $i=1$ and $i=2$. We therefore obtain
\begin{align*}
    &\left|
    \int_{\R^d} g(x)\,p_{\overline{m}^1_{t_0}}(x,u)\,\mathrm{d}x
    -
    \int_{\R^d} g(x)\,p_{\overline{m}^2_{t_0}}(x,u)\,\mathrm{d}x
    \right|
    \\
    &\le
    \int_0^{t_0}\int_{\R^d}
    |v(t,x)|\,
    \big|
    B\big(t,x,\overline{p}_{\overline{m}^1_t}(t,x,u),\Rr_{\overline{m}^1_t}(u)\big)
    -
    B\big(t,x,\overline{p}_{\overline{m}^2_t}(t,x,u),\Rr_{\overline{m}^2_t}(u)\big)
    \big|
    p_{\overline{m}^1_t}(x,u)\,\mathrm{d}x\,\mathrm{d}t
    \\
    &\quad+
    \|B\|_\infty
    \int_0^{t_0}\int_{\R^d}
    |v(t,x)|\,|p_{\overline{m}^1_t}(x,u)-p_{\overline{m}^2_t}(x,u)|
    \,\mathrm{d}x\,\mathrm{d}t.
\end{align*}
Using the Lipschitz continuity of $B$ in its last two variables, we infer
\begin{align*}
    &\left|
    \int_{\R^d} g(x)\,p_{\overline{m}^1_{t_0}}(x,u)\,\mathrm{d}x
    -
    \int_{\R^d} g(x)\,p_{\overline{m}^2_{t_0}}(x,u)\,\mathrm{d}x
    \right|
    \\
    &\le
    C_B
    \int_0^{t_0}\int_{\R^d}
    |v(t,x)|\,
    \Big(
    \big|\overline{p}_{\overline{m}^1_t}(t,x,u)-\overline{p}_{\overline{m}^2_t}(t,x,u)\big|
    +
    \big\|\Rr_{\overline{m}^1_t}(u)-\Rr_{\overline{m}^2_t}(u)\big\|_{\rm TV}
    \Big)
    p_{\overline{m}^1_t}(x,u)\,\mathrm{d}x\,\mathrm{d}t
    \\
    &\quad+
    \|B\|_\infty
    \int_0^{t_0}\int_{\R^d}
    |v(t,x)|\,|p_{\overline{m}^1_t}(x,u)-p_{\overline{m}^2_t}(x,u)|
    \,\mathrm{d}x\,\mathrm{d}t.
\end{align*}

We now estimate the two quantities appearing in the Lipschitz term. Since $\Gr$ is bounded,
\begin{align*}
\big|\overline{p}_{\overline{m}^1_t}(t,x,u)-\overline{p}_{\overline{m}^2_t}(t,x,u)\big|
&=
\left|
\int_0^1 \Gr(u,v)\big(p_{\overline{m}^1_t}(x,v)-p_{\overline{m}^2_t}(x,v)\big)\,\mathrm{d}v
\right|
\\
&\le
\|\Gr\|_\infty \int_0^1 \big|p_{\overline{m}^1_t}(x,v)-p_{\overline{m}^2_t}(x,v)\big|\,\mathrm{d}v.
\end{align*}
Integrating in $x$, this gives
\[
\int_{\R^d}\big|\overline{p}_{\overline{m}^1_t}(t,x,u)-\overline{p}_{\overline{m}^2_t}(t,x,u)\big|\,\mathrm{d}x
\le
\|\Gr\|_\infty \|p_{\overline{m}^1_t}-p_{\overline{m}^2_t}\|_{L^1(\R^d\times[0,1])}.
\]
Similarly, by definition of $\Rr_{\overline{m}_t}(u)$ and using the boundedness of $\Gr$, one has
\[
\big\|\Rr_{\overline{m}^1_t}(u)-\Rr_{\overline{m}^2_t}(u)\big\|_{\rm TV}
\le C\,\|p_{\overline{m}^1_t}-p_{\overline{m}^2_t}\|_{L^1(\R^d\times[0,1])},
\]
for some constant $C$ depending only on $\Gr$.
Combining these estimates, and using that $p_{\overline{m}^1_t}(\cdot,u)$ is a probability density in $x$, we get
\begin{align*}
    &\left|
    \int_{\R^d} g(x)\,p_{\overline{m}^1_{t_0}}(x,u)\,\mathrm{d}x
    -
    \int_{\R^d} g(x)\,p_{\overline{m}^2_{t_0}}(x,u)\,\mathrm{d}x
    \right|
    \\
    &\le
    C
    \int_0^{t_0}\sup_{y\in\R^d}|v(t,y)|
    \left(
        \|p_{\overline{m}^1_t}(\cdot,u)-p_{\overline{m}^2_t}(\cdot,u)\|_{L^1(\R^d)}
        +
        \|p_{\overline{m}^1_t}-p_{\overline{m}^2_t}\|_{L^1(\R^d\times[0,1])}
    \right)\,\mathrm{d}t,
\end{align*}
for some constant $C>0$ depending only on $B$ and $\Gr$.
Taking the supremum over all bounded Borel functions $g$ with $\|g\|_\infty\le1$, we deduce
\begin{align}\label{eq:L1_u_fixed}
    \|p_{\overline{m}^1_{t_0}}(\cdot,u)-p_{\overline{m}^2_{t_0}}(\cdot,u)\|_{L^1(\R^d)}
    \le
    C
    \int_0^{t_0}\sup_{\|g\|_\infty\le1}\sup_{y\in\R^d}|v(t,y)|
    \left(
        \|p_{\overline{m}^1_t}(\cdot,u)-p_{\overline{m}^2_t}(\cdot,u)\|_{L^1(\R^d)}
        +
        \|p_{\overline{m}^1_t}-p_{\overline{m}^2_t}\|_{L^1(\R^d\times[0,1])}
    \right)\mathrm{d}t.
\end{align}

We now integrate this inequality with respect to $u\in[0,1]$. Since
\[
\|p_{\overline{m}^1_t}-p_{\overline{m}^2_t}\|_{L^1(\R^d\times[0,1])}
=
\int_0^1 \|p_{\overline{m}^1_t}(\cdot,u)-p_{\overline{m}^2_t}(\cdot,u)\|_{L^1(\R^d)}\,\mathrm{d}u,
\]
we obtain
\begin{align*}
    \|p_{\overline{m}^1_{t_0}}-p_{\overline{m}^2_{t_0}}\|_{L^1(\R^d\times[0,1])}
    \le
    C
    \int_0^{t_0}
    \sup_{\|g\|_\infty\le1}\sup_{y\in\R^d}|v(t,y)|
    \,
    \|p_{\overline{m}^1_t}-p_{\overline{m}^2_t}\|_{L^1(\R^d\times[0,1])}
    \,\mathrm{d}t.
\end{align*}

To exploit the singular estimate \eqref{eq:grad_bound_uniqueness}, let us fix $q\in(1,2)$ and denote by
\[
q^\star:=\frac{q}{q-1}>1
\]
its conjugate exponent. By Hölder's inequality,
\begin{align*}
    \|p_{\overline{m}^1_{t_0}}-p_{\overline{m}^2_{t_0}}\|_{L^1}
    &\le
    C
    \left(
        \int_0^{t_0}
        \Big(\sup_{\|g\|_\infty\le1}\sup_{y\in\R^d}|v(t,y)|\Big)^q\,\mathrm{d}t
    \right)^{1/q}
    \left(
        \int_0^{t_0}
        \|p_{\overline{m}^1_t}-p_{\overline{m}^2_t}\|_{L^1}^{q^\star}\,\mathrm{d}t
    \right)^{1/q^\star}.
\end{align*}
Using \eqref{eq:grad_bound_uniqueness}, for every bounded $g$ with $\|g\|_\infty\le1$,
\[
\sup_{y\in\R^d}|v(t,y)|
\le \frac{1}{\sqrt{t_0-t}},
\]
hence
\begin{align*}
    \int_0^{t_0}
    \Big(\sup_{\|g\|_\infty\le1}\sup_{y\in\R^d}|v(t,y)|\Big)^q\,\mathrm{d}t
    &\le
    \int_0^{t_0}(t_0-t)^{-q/2}\,\mathrm{d}t
    =
    \frac{2}{2-q}\,t_0^{(2-q)/2}.
\end{align*}
Therefore,
\begin{align*}
    \|p_{\overline{m}^1_{t_0}}-p_{\overline{m}^2_{t_0}}\|_{L^1}
    \le
    C
    \left(\frac{2}{2-q}\right)^{1/q}
    t_0^{\frac{2-q}{2q}}
    \left(
        \int_0^{t_0}
        \|p_{\overline{m}^1_t}-p_{\overline{m}^2_t}\|_{L^1}^{q^\star}\,\mathrm{d}t
    \right)^{1/q^\star}.
\end{align*}
Raising both sides to the power $q^\star$, we obtain
\begin{align*}
    \|p_{\overline{m}^1_{t_0}}-p_{\overline{m}^2_{t_0}}\|_{L^1}^{q^\star}
    \le
    C
    T^{\frac{(2-q)q^\star}{2q}}
    \left(\frac{2}{2-q}\right)^{q^\star/q}
    \int_0^{t_0}
    \|p_{\overline{m}^1_t}-p_{\overline{m}^2_t}\|_{L^1}^{q^\star}\,\mathrm{d}t.
\end{align*}
Set
\[
\Delta(t):=\|p_{\overline{m}^1_t}-p_{\overline{m}^2_t}\|_{L^1}^{q^\star}.
\]
Then
\[
\Delta(t_0)\le C\int_0^{t_0}\Delta(t)\,\mathrm{d}t,\qquad t_0\in[0,T].
\]
By Gronwall's lemma, it follows that
\[
\Delta(t)=0,\qquad t\in[0,T].
\]
Equivalently,
\[
\|p_{\overline{m}^1_t}-p_{\overline{m}^2_t}\|_{L^1(\R^d\times[0,1])}=0,\qquad t\in[0,T].
\]
Hence
\[
p_{\overline{m}^1_t}=p_{\overline{m}^2_t}
\quad\text{for every }t\in[0,T],
\]
and therefore
\[
\overline{m}^1_t=\overline{m}^2_t,\qquad t\in[0,T].
\]
This proves the uniqueness of the Fokker--Planck weak solution.
\end{proof}

\end{appendix}

\bibliographystyle{plain}


\bibliography{MFG_moderate_Interaction-arxiv_version}

\begin{thebibliography}{42}
\providecommand{\natexlab}[1]{#1}
\providecommand{\url}[1]{\texttt{#1}}
\expandafter\ifx\csname urlstyle\endcsname\relax
  \providecommand{\doi}[1]{doi: #1}\else
  \providecommand{\doi}{doi: \begingroup \urlstyle{rm}\Url}\fi

\bibitem[Aliprantis and Border(2006)]{aliprantis2006infinite}
C.~Aliprantis and K.~Border.
\newblock \emph{Infinite dimensional analysis: a hitchhiker's guide}.
\newblock Springer--Verlag Berlin Heidelberg, 3rd edition, 2006.

\bibitem[Aronson and Serrin(1967)]{AronsonSerrin67}
D.~G. Aronson and J.~Serrin.
\newblock Local behavior of solutions of quasilinear parabolic equations.
\newblock \emph{Archive for Rational Mechanics and Analysis volume}, 25:\penalty0 81–122, 1967.
\newblock \doi{10.1137/S0363012996313549}.
\newblock URL \url{https://doi.org/10.1007/BF00281291}.

\bibitem[Aurell et~al.(2021)Aurell, Carmona, and Lauri{\`e}re]{Aurell2021StochasticGG}
A.~Aurell, R.~A. Carmona, and M.~Lauri{\`e}re.
\newblock Stochastic graphon games: {II}. the linear-quadratic case.
\newblock \emph{Applied Mathematics \& Optimization}, 85, 2021.
\newblock URL \url{https://api.semanticscholar.org/CorpusID:235195871}.

\bibitem[Bayraktar et~al.(2023{\natexlab{a}})Bayraktar, Chakraborty, and Wu]{ErhanGraphon2023}
E.~Bayraktar, S.~Chakraborty, and R.~Wu.
\newblock {Graphon mean field systems}.
\newblock \emph{The Annals of Applied Probability}, 33\penalty0 (5):\penalty0 3587 -- 3619, 2023{\natexlab{a}}.
\newblock \doi{10.1214/22-AAP1901}.
\newblock URL \url{https://doi.org/10.1214/22-AAP1901}.

\bibitem[Bayraktar et~al.(2023{\natexlab{b}})Bayraktar, Wu, and Zhang]{10.1007/s00245-023-09996-y}
E.~Bayraktar, R.~Wu, and X.~Zhang.
\newblock Propagation of chaos of forward–backward stochastic differential equations with graphon interactions.
\newblock \emph{Appl. Math. Optim.}, 88\penalty0 (1), 2023{\natexlab{b}}.
\newblock ISSN 0095-4616.
\newblock \doi{10.1007/s00245-023-09996-y}.
\newblock URL \url{https://doi.org/10.1007/s00245-023-09996-y}.

\bibitem[Bayraktar et~al.(2026)Bayraktar, He, and Kim]{bayraktar2026graphonparticlesystemscommon}
E.~Bayraktar, X.~He, and D.~Kim.
\newblock Graphon particle systems with common noise.
\newblock 2026.
\newblock URL \url{https://arxiv.org/abs/2507.03265}.

\bibitem[Caines and Huang(2020)]{Caines2020GraphonMF}
P.~E. Caines and M.~Huang.
\newblock Graphon mean field games and their equations.
\newblock \emph{SIAM J. Control. Optim.}, 59:\penalty0 4373--4399, 2020.

\bibitem[Cao and Laurière(2025)]{cao2025probabilisticanalysisgraphonmean}
Z.~Cao and M.~Laurière.
\newblock Probabilistic analysis of graphon mean field control.
\newblock 2025.
\newblock URL \url{https://arxiv.org/abs/2505.19664}.

\bibitem[Cardaliaguet(2017)]{Cardaliaguet2017}
P.~Cardaliaguet.
\newblock The convergence problem in mean field games with local coupling.
\newblock \emph{Applied Mathematics \& Optimization}, 76, 2017.
\newblock \doi{10.1007/s00245-017-9434-0}.
\newblock URL \url{https://doi.org/10.1007/s00245-017-9434-0}.

\bibitem[Carmona and Delarue(2018{\natexlab{a}})]{carmona2018probabilisticI}
R.~Carmona and F.~Delarue.
\newblock \emph{Probabilistic theory of mean field games with applications {I}}, volume~83 of \emph{Probability theory and stochastic modelling}.
\newblock Springer International Publishing, 2018{\natexlab{a}}.

\bibitem[Carmona and Delarue(2018{\natexlab{b}})]{carmona2018probabilisticII}
R.~Carmona and F.~Delarue.
\newblock \emph{Probabilistic theory of mean field games with applications {II}}, volume~84 of \emph{Probability theory and stochastic modelling}.
\newblock Springer International Publishing, 2018{\natexlab{b}}.

\bibitem[Carmona et~al.(2016)Carmona, Delarue, and Lacker]{Lacker_carmona_delarue_CN}
R.~Carmona, F.~Delarue, and D.~Lacker.
\newblock {Mean field games with common noise}.
\newblock \emph{The Annals of Probability}, 44\penalty0 (6):\penalty0 3740 -- 3803, 2016.
\newblock \doi{10.1214/15-AOP1060}.
\newblock URL \url{https://doi.org/10.1214/15-AOP1060}.

\bibitem[Coppini et~al.(2024)Coppini, Crescenzo, and Pham]{Coppini2024NonlinearGM}
F.~Coppini, A.~D. Crescenzo, and H.~Pham.
\newblock Nonlinear graphon mean-field systems.
\newblock \emph{Stochastic Processes and their Applications}, 2024.
\newblock URL \url{https://api.semanticscholar.org/CorpusID:267636698}.

\bibitem[Crescenzo et~al.(2024)Crescenzo, Fuhrman, Kharroubi, and Pham]{decrescenzo2024meanfieldcontrolnonexchangeable}
A.~D. Crescenzo, M.~Fuhrman, I.~Kharroubi, and H.~Pham.
\newblock Mean-field control of non exchangeable systems.
\newblock 2024.
\newblock URL \url{https://arxiv.org/abs/2407.18635}.

\bibitem[Crucianelli and Tangpi(2024)]{crucianelli2024interactingparticlesystemssparse}
C.~Crucianelli and L.~Tangpi.
\newblock Interacting particle systems on sparse {W}-random graphs.
\newblock 2024.
\newblock URL \url{https://arxiv.org/abs/2410.11240}.

\bibitem[Djete(2023{\natexlab{a}})]{closed-loop-MFG_MDF}
M.~F. Djete.
\newblock Large population games with interactions through controls and common noise: convergence results and equivalence between open-loop and closed-loop controls.
\newblock \emph{ESAIM: COCV}, 29:\penalty0 39, 2023{\natexlab{a}}.
\newblock \doi{10.1051/cocv/2023005}.
\newblock URL \url{https://doi.org/10.1051/cocv/2023005}.

\bibitem[Djete(2023{\natexlab{b}})]{djete2023stackelbergmeanfieldgames}
M.~F. Djete.
\newblock Stackelberg mean field games: convergence and existence results to the problem of principal with multiple agents in competition.
\newblock 2023{\natexlab{b}}.
\newblock URL \url{https://arxiv.org/abs/2309.00640}.

\bibitem[Djete(2025)]{djete2025nonexchangeablemeanfieldcontrol}
M.~F. Djete.
\newblock A non-exchangeable mean field control problem with controlled interactions.
\newblock 2025.
\newblock URL \url{https://arxiv.org/abs/2511.00288}.

\bibitem[Djete and Touzi(2024)]{10.1214/23-AAP1993}
M.~F. Djete and N.~Touzi.
\newblock {Mean field game of mutual holding}.
\newblock \emph{The Annals of Applied Probability}, 34\penalty0 (6):\penalty0 4999 -- 5031, 2024.
\newblock \doi{10.1214/23-AAP1993}.
\newblock URL \url{https://doi.org/10.1214/23-AAP1993}.

\bibitem[Djete et~al.(0)Djete, Possama{\"\i}, and Tan]{djete2019general}
M.~F. Djete, D.~Possama{\"\i}, and X.~Tan.
\newblock Mckean–vlasov optimal control: Limit theory and equivalence between different formulations.
\newblock \emph{Mathematics of Operations Research}, 0\penalty0 (0):\penalty0 null, 0.
\newblock \doi{10.1287/moor.2021.1232}.
\newblock URL \url{https://doi.org/10.1287/moor.2021.1232}.

\bibitem[Djete et~al.(2022)Djete, Possama{\"\i}, and Tan]{djete2019mckean}
M.~F. Djete, D.~Possama{\"\i}, and X.~Tan.
\newblock {McKean–Vlasov optimal control: The dynamic programming principle}.
\newblock \emph{The Annals of Probability}, 50\penalty0 (2):\penalty0 791 -- 833, 2022.
\newblock \doi{10.1214/21-AOP1548}.
\newblock URL \url{https://doi.org/10.1214/21-AOP1548}.

\bibitem[El~Karoui and Tan(2013)]{karoui2013capacities}
N.~El~Karoui and X.~Tan.
\newblock Capacities, measurable selection and dynamic programming part {I}: abstract framework.
\newblock \emph{arXiv preprint arXiv:1310.3363}, 2013.

\bibitem[El~Karoui et~al.(1997)El~Karoui, Peng, and Quenez]{el1997backward}
N.~El~Karoui, S.~Peng, and M.-C. Quenez.
\newblock Backward stochastic differential equations in finance.
\newblock \emph{Mathematical Finance}, 7\penalty0 (1):\penalty0 1--71, 1997.

\bibitem[Flandoli et~al.(2022)Flandoli, Ghio, and Livieri]{Flandoli2022}
F.~Flandoli, M.~Ghio, and G.~Livieri.
\newblock N-player games and mean field games of moderate interactions.
\newblock \emph{Applied Mathematics \& Optimization}, 85, 2022.
\newblock \doi{10.1007/s00245-022-09834-7}.
\newblock URL \url{https://doi.org/10.1007/s00245-022-09834-7}.

\bibitem[Gao et~al.(2020)Gao, Tchuendom, and Caines]{Gao2020LinearQG}
S.~Gao, R.~F. Tchuendom, and P.~E. Caines.
\newblock Linear quadratic graphon field games.
\newblock \emph{ArXiv}, abs/2006.03964, 2020.
\newblock URL \url{https://api.semanticscholar.org/CorpusID:219530959}.

\bibitem[Huang et~al.(2006)Huang, Malham{\'e}, and Caines]{huang2006large}
M.~Huang, R.~Malham{\'e}, and P.~Caines.
\newblock Large population stochastic dynamic games: closed--loop {M}c{K}ean--{V}lasov systems and the {N}ash certainty equivalence principle.
\newblock \emph{Communications in Information \& Systems}, 6\penalty0 (3):\penalty0 221--252, 2006.

\bibitem[Huang et~al.(2007)Huang, Caines, and Malham{\'e}]{huang2007nash}
M.~Huang, P.~Caines, and R.~Malham{\'e}.
\newblock The {N}ash certainty equivalence principle and {M}c{K}ean--{V}lasov systems: an invariance principle and entry adaptation.
\newblock In D.~Castanon and J.~Spall, editors, \emph{46th IEEE conference on decision and control, 2007}, pages 121--126. IEEE, 2007.

\bibitem[Jabin et~al.(2024)Jabin, Poyato, and Soler]{jabin2024meanfieldlimitnonexchangeablesystems}
P.-E. Jabin, D.~Poyato, and J.~Soler.
\newblock Mean-field limit of non-exchangeable systems.
\newblock 2024.
\newblock URL \url{https://arxiv.org/abs/2112.15406}.

\bibitem[Jourdain and M{\'e}l{\'e}ard(1998)]{jourdain1998propagation}
B.~Jourdain and S.~M{\'e}l{\'e}ard.
\newblock Propagation of chaos and fluctuations for a moderate model with smooth initial data.
\newblock \emph{Annales de l'institut Henri Poincar{\'e}, Probabilit{\'e}s et Statistiques $(${\rm B}$)$}, 34\penalty0 (6):\penalty0 727--766, 1998.

\bibitem[Kharroubi et~al.(2025)Kharroubi, Mekkaoui, and Pham]{kharroubi2025stochasticmaximumprincipleoptimal}
I.~Kharroubi, S.~Mekkaoui, and H.~Pham.
\newblock Stochastic maximum principle for optimal control problem of non exchangeable mean field systems.
\newblock 2025.
\newblock URL \url{https://arxiv.org/abs/2506.05595}.

\bibitem[Krylov(1980)]{krylov1980controlled}
N.~Krylov.
\newblock \emph{Controlled diffusion processes}, volume~14 of \emph{Stochastic modelling and applied probability}.
\newblock Springer--Verlag New York, 1980.

\bibitem[Kurtz(2014)]{kurtz2014weak}
G.~Kurtz, T.
\newblock Weak and strong solutions of general stochastic models.
\newblock \emph{Electronic Communications in Probability}, 19\penalty0 (58):\penalty0 1--16, 2014.

\bibitem[Lacker(2020)]{Lacker-closedloop}
D.~Lacker.
\newblock {On the convergence of closed-loop Nash equilibria to the mean field game limit}.
\newblock \emph{The Annals of Applied Probability}, 30\penalty0 (4):\penalty0 1693 -- 1761, 2020.
\newblock \doi{10.1214/19-AAP1541}.
\newblock URL \url{https://doi.org/10.1214/19-AAP1541}.

\bibitem[Lacker and Flem(2023)]{leflemclosed2023}
D.~Lacker and L.~L. Flem.
\newblock {Closed-loop convergence for mean field games with common noise}.
\newblock \emph{The Annals of Applied Probability}, 33\penalty0 (4):\penalty0 2681 -- 2733, 2023.
\newblock \doi{10.1214/22-AAP1876}.
\newblock URL \url{https://doi.org/10.1214/22-AAP1876}.

\bibitem[Lacker and Soret(2023)]{doi:10.1287/moor.2022.1329}
D.~Lacker and A.~Soret.
\newblock A label-state formulation of stochastic graphon games and approximate equilibria on large networks.
\newblock \emph{Mathematics of Operations Research}, 48\penalty0 (4):\penalty0 1987--2018, 2023.
\newblock \doi{10.1287/moor.2022.1329}.
\newblock URL \url{https://doi.org/10.1287/moor.2022.1329}.

\bibitem[Lacker et~al.(2020)Lacker, Shkolnikov, and Zhang]{Lacker-Shkolnikov-Zhang_2020}
D.~Lacker, M.~Shkolnikov, and J.~Zhang.
\newblock Superposition and mimicking theorems for conditional {M}ckean-{V}lasov equations.
\newblock \emph{arXiv preprint arXiv:2004.00099}, 2020.

\bibitem[Lasry and Lions(2007)]{lasry2007mean}
J.-M. Lasry and P.-L. Lions.
\newblock Mean field games.
\newblock \emph{Japanese Journal of Mathematics}, 2\penalty0 (1):\penalty0 229--260, 2007.

\bibitem[Lov{\'a}sz(2012)]{Lovsz2012LargeNA}
L.~M. Lov{\'a}sz.
\newblock Large networks and graph limits.
\newblock In \emph{Colloquium Publications}, 2012.
\newblock URL \url{https://api.semanticscholar.org/CorpusID:27057044}.

\bibitem[Ma and Zhang(2002)]{JinRepresentation2002}
J.~Ma and J.~Zhang.
\newblock {Representation theorems for backward stochastic differential equations}.
\newblock \emph{The Annals of Applied Probability}, 12\penalty0 (4):\penalty0 1390 -- 1418, 2002.
\newblock \doi{10.1214/aoap/1037125868}.
\newblock URL \url{https://doi.org/10.1214/aoap/1037125868}.

\bibitem[M{\'e}l{\'e}ard and Roelly-Coppoletta(1987)]{meleard1987propagation}
S.~M{\'e}l{\'e}ard and S.~Roelly-Coppoletta.
\newblock A propagation of chaos result for a system of particles with moderate interaction.
\newblock \emph{Stochastic Processes and their Applications}, 26:\penalty0 317--332, 1987.

\bibitem[Oelschl{\"a}ger(1985)]{oelschlager1985law}
K.~Oelschl{\"a}ger.
\newblock A law of large numbers for moderately interacting diffusion processes.
\newblock \emph{Zeitschrift f{\"u}r Wahrscheinlichkeitstheorie und verwandte Gebiete}, 69\penalty0 (2):\penalty0 279--322, 1985.

\bibitem[Tangpi and Zhou(2024)]{repec:spr:finsto:v:28:y:2024:i:2:d:10.1007_s00780-023-00527-9}
L.~Tangpi and X.~Zhou.
\newblock Optimal investment in a large population of competitive and heterogeneous agents.
\newblock \emph{Finance and Stochastics}, 28\penalty0 (2):\penalty0 497--551, April 2024.
\newblock \doi{10.1007/s00780-023-00527-9}.
\newblock URL \url{https://ideas.repec.org/a/spr/finsto/v28y2024i2d10.1007_s00780-023-00527-9.html}.

\end{thebibliography}




\end{document}